\newtheorem*{thm*}{Theorem}
\newtheorem{thm}{Theorem}[section]
\newtheorem{cor}[thm]{Corollary}
\newtheorem{prop}[thm]{Proposition}
\newtheorem{lem}[thm]{Lemma}
\theoremstyle{definition}
\newtheorem{defn}[thm]{Definition}
\newtheorem{exmp}[thm]{Example}
\theoremstyle{remark}
\newtheorem{rem}[thm]{Remark}
\numberwithin{figure}{section}
\numberwithin{equation}{subsection}
\algrenewcommand\algorithmicrequire{\textbf{Input:}}
\algrenewcommand\algorithmicensure{\textbf{Output:}}
\newcommand{\x}{\mathbf{x}}
\DeclareMathOperator*{\argmax}{argmax}
\begin{document}

\thispagestyle{empty}

\title{On Finding the Closest Zonotope to a Polytope in Hausdorff Distance}
\author{George D. Torres}

\maketitle
\makeatletter
\makeatother

\begin{abstract}
We provide a local theory for the optimization of the Hausdorff distance between a polytope and a zonotope. To do this, we compute explicit local formulae for the Hausdorff function $d(P, -) : \ZZ_n \to \R$, where $P$ is a fixed polytope and $\ZZ_n$ is the space of rank $n$ zonotopes. This local theory is then used to provide an optimization algorithm based on subgradient descent that converges to critical points of $d(P, -)$. We also express the condition of being at a local minimum as a polyhedral feasibility condition.
\end{abstract}

\vspace{0.5cm}

A zonotope is a Minkowski sum of line segments (generators) in Euclidean space. In two dimensions, they are exactly the centrally symmetric polytopes; in higher dimensions, they are still centrally symmetric but aren't uniquely characterized by this property \cite{McM70}. We denote $\ZZ_n(\R^d)$ to be the space of all zonotopes that can be expressed with $n$ generators in $\R^d$. We are interested how well zonotopes can approximate other polytopes under the Hausdorff distance. Given any two polytopes $P$ and $Q$, the Hausdorff distance between $P$ and $Q$ is:
\[ d(P,Q) = \max\left( \sup_{p \in P} d(p, Q), \sup_{q \in Q} d(q,P) \right) \]
Given a fixed polytope $P \subset \R^d$, we define $d_P: \ZZ_n(\R^d) \to \R$ by $Z \mapsto d(P,Z)$. We seek a global minimum of this function, which we call the \emph{Hausdorff optimal approximation to $P$}. We will see that $\ZZ_n(\R^d)$ is locally isomorphic to Euclidean space, and that $d_P$ is a non-convex, piecewise smooth function on that space. 

To approach optimizing $d_P$, in Section \ref{sec:local} we will study the function locally in regimes where we can write it explicitly as a maximum of a finite number of smooth functions. This local theory will allow us to compute gradients and subgradients of this function. In Section \ref{sec:opt}, we will use this local theory to construct a ``feasibility cone'' $C(Z)$ for each zonotope $Z$ in a generic family of zonotopes in $\ZZ_n(\R^d)$. We will show that, under certain circumstances, if the feasiblity cone $C(Z)$ has nonempty interior, then $Z$ is not a local minimum of $d_P$.

Finally, in Section \ref{sec:alg}, we will combine these local results to build an optimization algorithm to find critical points of $d_P$. This algorithm is based on the subgradient method for non-convex non-smooth functions \cite{Shor85}.
Numerical and complexity results for this algorithm in various dimensions are given in Section \ref{sec:numerical}. A link to our python implementation of this algorithm is also provided.

\section{Motivation and Previous Work}
There are several motivations for approximating a polytope by a zonotope. The first is that zonotopal approximation has been shown to be a sub-problem for pruning of ReLU neural networks \cite{MSRM22}. The authors showed that the support of the convex dual of a convex neural network is a zonotope, and to prune such a neural network one can approximate that zonotope with a lower rank zonotope. Zonotopes are also used in 3-D graphics for object collision detection \cite{GNZ03}.  Previous work by \cite{GNZ03} established algorithms to find tight enclosing zonotopes of a polytope. Using these algorithms, collision detection between arbitrary polytopes can be made more efficient by performing collision detection between their enclosing zonotopes (at the cost of some false positives). Our work here can be applied: performing collision detection on the Hausdorff optimal zonotope can give more accurate results, though it will also introduce false negatives as well.

Approximating polytopes and point clouds by zonotopes also has applications to state estimation \cite{BAC06,COM15}, reachability analysis \cite{Ku98,ASB08,Gir05}, and many other areas. These applications arise because of several computationally advantageous properties of zonotopes, like their ability to represent $\O(n^2)$ points with $\O(n)$ information, having efficient set membership algorithms, and being closed under Minkowski sum and multiplication. In some cases, $P$ itself is also a zonotope and one wishes to approximate it by a zonotope of lower rank -- this is called zonotope order reduction (or rank reduction) \cite{YS18}. \\

A central difficulty in optimizing $d_P$ is that it is generally not convex. For example, suppose that that $P\subset \R^2$ has a discrete rotational symmetry $R \in \text{O}(2)$ of order $3$. Then if $Z\in \ZZ_2(\R^2)$ is a local minimum, then so is $R^jZ$ for any $j$. Moreover, $Z$ is a parallelogram and hence cannot have a rotational symmetry of order different from $2$ or $4$. Therefore $RZ \neq Z$, and so $RZ$ and $Z$ are two distinct local minima for $d_P$. It is also a non-smooth function, as we will see. The optimization of non-convex non-smooth functions is an expansive and nontrivial body of problems for which there is no single best method \cite{RW98}.

Our primary contribution is an iterative algorithm to approximate any polytope $P$ by a zonotope $Z \in \ZZ_n(\R^d)$ by optimizing the Hausdorff distance $d_P(Z)$ through subgradient descent. This procedure is detailed in Algorithm \ref{alg:subgradient}. The space complexity of Algorithm \ref{alg:subgradient} is $\O(d { n \choose d-1 })$ and the time complexity per iteration step is equal to the time complexity of computing $d_P(Z)$ for a general position zonotope $Z \in \ZZ_n(\R^d)$. In other words, we show that, given some precomputation, the iterative step for optimizing $d_P$ is bounded by simple evaluation of $d_P$ at a point (at the cost of exponential space complexity). In addition, this work provides a theoretical framework for understanding $d_P$, especially its local properties.

This work is organized as follows. In Sections \ref{sec:zono} and \ref{sec:haus}, we set up the problem by defining the space of zonotopes and establishing some geometric properties of $d_P$. Then in Section \ref{sec:local} we study $d_P$ locally in regimes where we can write it explicitly as a maximum of a finite number of differentiable functions. This local theory will allow us to compute gradients and subgradients of this function. In Section \ref{sec:opt}, we will use this local theory to construct a ``feasibility cone'' for each zonotope in a generic family of zonotopes in $\ZZ_n(\R^d)$. We will show that any interior point of the feasibility cone $\mathcal{C}(P,Z)$ corresponds to perturbations of $Z$ that decrease the Hausdorff distance $d_P(Z)$.

Finally, in Section \ref{sec:alg}, we will combine these local results into Algorithm \ref{alg:subgradient}, which is our proposed method for finding local minima of $d_P$. This algorithm is based on the subgradient method for locally Lipschitz functions \cite{Shor85}. Numerical results and complexity analysis for this algorithm in various dimensions are given in Section \ref{sec:numerical}.

\section{The Space of Zonotopes}
\label{sec:zono}

A zonotope is a Minkowski sum of line segments, also called generators, in Euclidean space:
\[ Z = \sum_i S_i \]
For a standard reference on zonotopes, see for example \cite{GO97,McM70}. We take the convention that every segment $S_i$ has the origin as an endpoint (whereas some authors assume each $S_i$ is centered on the origin, for example \cite{McM71}).

\begin{lem}\label{lem:zonotope}
 Let $I^n\subset \R^n$ be the standard unit cube. If $A: \R^n \to \R^d$ is any affine map, then $A(I^n)$ is a zonotope in $\R^d$. Likewise, any zonotope $Z$ is the image of some unit cube under an affine map $A$. Moreover, if $C^n$ is the set of vertices of $I^n$, then $A(C^n)$ contains the set of vertices of $A(I^n)$.
\end{lem}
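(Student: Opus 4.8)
The plan is to prove the three assertions in turn, each reduced to the elementary facts that affine maps commute with Minkowski sums and with convex hulls, together with the two descriptions $I^n = \sum_{i=1}^n [0,e_i]$ and $I^n = \mathrm{conv}(C^n)$ of the standard cube.

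For the first assertion I would write the affine map as $A(x) = Lx + b$ with $L:\R^n\to\R^d$ linear and $b = A(0)$. Since a linear map distributes over Minkowski sums, $L(I^n) = L\bigl(\sum_{i=1}^n [0,e_i]\bigr) = \sum_{i=1}^n [0, Le_i]$, a Minkowski sum of $n$ segments. Translating by $b$ and absorbing the translation into one summand (replacing $[0,Le_1]$ by the parallel segment $[b, b+Le_1]$) exhibits $A(I^n)$ as a Minkowski sum of $n$ line segments, i.e.\ a zonotope of rank $\le n$. If one insists on the normalization that each generator has an endpoint at the origin, this applies verbatim to the centered representative $A(I^n)-b$, and I would add a one-line remark to that effect.

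For the converse, given $Z = \sum_{i=1}^n S_i$ with $S_i = [0,g_i]$, I would let $L$ be the linear map sending $e_i$ to $g_i$ (the $d\times n$ matrix whose columns are the generators). Running the previous computation backwards gives $L(I^n) = \sum_{i=1}^n [0,g_i] = Z$, so $Z$ is the image of $I^n$ under the affine (here even linear) map $L$; a zonotope carrying a nonzero basepoint is handled by postcomposing with the appropriate translation.

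For the last assertion I would use $I^n = \mathrm{conv}(C^n)$ and the fact that an affine map commutes with convex hulls, so $A(I^n) = \mathrm{conv}(A(C^n))$. Because $A(C^n)$ is a finite set, every vertex of the polytope $\mathrm{conv}(A(C^n))$ must already lie in $A(C^n)$ — a vertex is an extreme point, hence cannot be written as a convex combination of points of the set other than itself — which is precisely the claimed inclusion of vertex sets. I do not expect a genuine obstacle here; the only subtlety to flag is that the inclusion is typically strict, since distinct cube vertices may collapse to a single image point or a cube vertex may land in the relative interior of $A(I^n)$, and this is exactly why the statement asserts containment rather than equality.
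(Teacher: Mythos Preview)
The paper states this lemma without proof, so there is no authorial argument to compare against. Your proposal is correct and is the standard elementary proof: the first two assertions follow immediately from the Minkowski-sum description $I^n=\sum_i[0,e_i]$ and the fact that linear maps distribute over Minkowski sums, while the third follows from $I^n=\mathrm{conv}(C^n)$ together with $A(\mathrm{conv}(S))=\mathrm{conv}(A(S))$ for affine $A$. One cosmetic point: given the paper's convention that each generating segment has the origin as an endpoint and its later parametrization $Z=Z(Q,\mu)$, it is cleanest to write $A(I^n)=\mu+\sum_i[0,Le_i]$ directly rather than absorbing the translation into one of the segments; this matches the paper's notation and avoids the aside about normalization.
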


\begin{defn}\label{def:cubical}
With $Z,A$ and $C^n$ as in Lemma \ref{lem:zonotope}, we call $A(C^n)$ the \emph{cubical vertices} of $Z$. We call $A(\mathbf{0})$ the \emph{base point} or \emph{translation vector} of $Z$. We say also say that $Z$ has \emph{rank} $n$ if $n$ is the smallest integer such that $Z = A(I^n)$.\footnote{Some authors refer to the rank as the ``number of zones''.}
\end{defn}

It will be useful for us to be able to characterize when a cubical vertex of a zonotope $Z$ is a vertex of $Z$. The following lemma formulates this as a condition on the generators of $Z$.

\begin{lem}\label{lem:lp}
Let $Z\subset \R^d$ be a zonotope, let $V$ be its set of cubical vertices, and let $\{g_i\}_{i=1,...,n}$ be its generators. For any $m \in V$, if we write $m = \sum_{i\in J}g_i$, then $m$ is a vertex of $Z$ if and only if there exists a hyperplane containing the origin $H \subset \R^d$ that separates $\{g_i\}_{i\in J}$ and $\{g_i\}_{i\notin J}$.
\end{lem}
\begin{proof}
Suppose such a hyperplane $H$ exists and let $\hat{n}$ be the unit normal vector to $H$ such that $\langle g_i,\hat{n}\rangle > 0$ for all $i \in J$. Then define the linear map $L: \R^d \to \R$ by $L(x) = \langle x,\hat{n}\rangle$. Note that $\argmax_{m' \in V} L(m') = m$ because adding or removing a generator from $m$ will strictly decrease the value of $L$. Since a linear function cannot be maximized at a non-vertex point of a polytope, and since $\VERT(Z) \subset V$, see that:
\[ \max_{x \in Z} L(x) = \max_{v \in \VERT(Z)} L(x) \leq \max_{m' \in V} L(m')\]
We have already concluded that the RHS is $L(m)$, so in fact $L$ is maximized over all of $Z$ at $m$. Therefore $m$ is a vertex.

Conversely, if $m$ is a vertex, then there is a linear function $L$ that is maximized strictly at $m$. Let $J' = \{i \in [n] \mid L(g_i)>0\}$. By the same argument as above, $L$ is maximized at $m' = \sum_{i \in J'} g_i$. Since we assumed this maximum was uniquely $m$, we have $m=  m'$ and hence $J'= J$. Therefore $H = \ker(L)$ separates $\{g_i\}_{i \in J}$ from $\{g_i\}_{i\notin J}$.
\end{proof}

For the remainder of this work, we establish the following notation. Given any zonotope $Z$, we denote the affine map associated to $Z$ to be $A_Z$. We denote the linear component of the affine map $A_Z$ in the standard basis by a matrix $Q_Z \in \R^{n \times d}$ and we denote the translation component of $A_Z$ by a vector $\mu_Z \in \R^d$. Conversely, given a matrix $Q \in \R^{n \times d}$ and a vector $\mu \in \R^{d}$, we denote the zonotope associated to the affine map $A(x) = Qx + \mu$ by $Z(Q,\mu)$.

An alternative characterization of a zonotope that we will make use of is the symmetry of its faces. 
\begin{defn}
A set of points is called \emph{centrally symmetric} if it is invariant under point reflection around its barycenter. 
\end{defn}
The faces of a zonotope are centrally symmetric, and in fact, this is also a sufficient condition:
\begin{prop}[\cite{McM70}]\label{prop:2faces}
A polytope $X$ is a zonotope if and only if all 2-faces are centrally symmetric.
\end{prop}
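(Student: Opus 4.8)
The plan is to treat the two implications separately; the forward one is immediate and the converse is the real content.

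\emph{Zonotope $\Rightarrow$ centrally symmetric $2$-faces.} Every nonempty proper face of a polytope is the face maximizing some linear functional $\langle\cdot,\nu\rangle$, and for Minkowski sums $(A+B)_\nu = A_\nu+B_\nu$. So if $Z=\sum_i S_i$, then $Z_\nu=\sum_i (S_i)_\nu$, and each $(S_i)_\nu$ is an endpoint of $S_i$ or all of $S_i$; hence every face of $Z$ is again a Minkowski sum of segments, i.e.\ a zonotope, and therefore (being a sum of centrally symmetric sets) centrally symmetric. In particular all $2$-faces are.

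\emph{Centrally symmetric $2$-faces $\Rightarrow$ zonotope.} I would induct on the number $m$ of distinct edge directions of $X$. If $m=0$ then $X$ is a point, a trivial zonotope; so suppose $m\ge 1$ and fix an edge direction $u$. The first task is to set up the \emph{belt} of $u$: the set $B$ of edges obtained from one edge parallel to $u$ by repeatedly passing, inside each centrally symmetric $2$-face encountered, from an edge to the opposite (parallel, equal-length) edge. One shows that every edge in $B$ is parallel to $u$ with a single common length $\ell$ (parallelism and length propagate across each $2$-face by central symmetry, using that a convex polygon has at most one antipodal pair of edges in a fixed direction), that $B$ exhausts all $u$-parallel edges of $X$ (the incidence graph on $u$-parallel edges and the $2$-faces joining them is connected), and hence that $X$ has no $u$-parallel edge of length other than $\ell$.

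The heart of the argument is then that $[0,\ell u]$ is a Minkowski summand of $X$, so that $X = X' + [0,\ell u]$ with $X' := X\ominus[0,\ell u]$ a polytope. I would verify this face by face: a $2$-face $F$ of $X$ either has no edge parallel to $u$, in which case its normal cone misses the hyperplane $u^\perp$ and contributes no obstruction, or it has exactly one antipodal pair of edges parallel to $u$, necessarily of length $\ell$, so that $[0,\ell u]$ is a summand of $F$ and its removal replaces $F$ by a smaller centrally symmetric polygon (or a segment). Globalizing, $X' = X\ominus[0,\ell u]$ is a polytope whose $2$-faces are all centrally symmetric (each is a translate of a $2$-face of $X$, or such a shrunk polygon, or a $2$-face of a $3$-face of $X$) and whose edge directions are those of $X$ with $u$ deleted: a $u$-parallel edge of $X'$ would, on adding $[0,\ell u]$ back, produce a $u$-parallel edge of $X$ of length $>\ell$, a contradiction. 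By the induction hypothesis $X' = \sum_j S_j$ is a zonotope, so $X = \sum_j S_j + [0,\ell u]$ is one as well.

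\emph{Main obstacle.} The crux is the claim that $[0,\ell u]$ is genuinely a summand and that $X\ominus[0,\ell u]$ is a true polytope with the expected face lattice; equivalently, that the belt is connected and ``closes up'' consistently, so that all $u$-parallel edges really do share one length $\ell$. This is the one place the hypothesis is used in full \emph{metric} strength: its purely combinatorial shadow (antipodal edge normals on every $2$-face) is not enough, as a hexagon with edge lengths $(2,2,2,1,1,1)$ shows. I would handle it by projecting along $u$ to the $(d-1)$-polytope $\pi(X)\subset\R^d/\langle u\rangle$, under which $B$ maps onto a subcomplex of $\partial\pi(X)$, and by tracking support functions to identify $h_X - \ell\,\max(0,\langle u,\cdot\rangle)$ as the support function of $X'$.
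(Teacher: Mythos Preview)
The paper does not supply a proof of this proposition at all: it is stated with attribution to McMullen \cite{McM70} and then used as a black box. So there is no ``paper's own proof'' to compare your attempt against.

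That said, your sketch is the classical belt (or zone) argument that underlies the result in the McMullen--Shephard circle of ideas, and the overall architecture---induct on the number of edge directions, show that all edges in a fixed direction $u$ share a common length $\ell$ via the belt of centrally symmetric $2$-faces, peel off the summand $[0,\ell u]$, and recurse---is correct. You have also correctly located the one genuinely delicate step: the connectedness of the belt, i.e.\ that \emph{every} $u$-parallel edge is reached by the opposite-edge walk through $2$-faces. Your proposed handling of this (project along $u$ and identify the belt with a subcomplex of $\partial\pi(X)$, then compare support functions) is the right idea, but as written it is still a plan rather than a proof: you would need to argue, for instance via the simple connectedness of $\partial X\cong S^{d-1}$, that the belt is a separating hypersurface and hence that two putative belts in the same direction must intersect. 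The remark about new $2$-faces of $X'$ arising from $3$-faces of $X$ also deserves a line of justification (such a face is the Minkowski difference of a $3$-face of $X$ by $[0,\ell u]$, and one checks it inherits central symmetry). None of this is wrong, but if you intend to include a self-contained proof rather than a citation, those two points are where a careful reader will want more detail.
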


We call a zonotope whose generators are in general position a \emph{general position zonotope}. These enjoy a non-degeneracy property:
\begin{lem}\label{lem:zono:determined}
If $Z$ is a general position zonotope, then the set of 1-faces (edges) of $Z$ uniquely determine the generators of $Z$ (and hence also the rank of $Z$).
\end{lem}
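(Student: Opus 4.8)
The plan is to show that the edges of $Z$ fall into exactly $n$ parallelism classes — the ``zones'' of the zonotope — and that each class reconstructs one generator, so that the edge set determines the generators (up to sign) and in particular their number.

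First I would recall the support‑face description of a Minkowski sum. Writing $Z = \sum_{i=1}^n S_i$ with $S_i = [\mathbf{0}, g_i]$, for any linear functional $L$ on $\R^d$ the face of $Z$ on which $L$ attains its maximum equals $v_L + \sum_{i \in Z_0(L)} S_i$, where $v_L = \sum_{i : L(g_i) > 0} g_i$ and $Z_0(L) = \{\, i \in [n] : L(g_i) = 0 \,\}$. This is just the fact that the support face of a Minkowski sum is the Minkowski sum of the support faces, together with the (obvious) support faces of a single segment, and it is the same computation that underlies the proof of Lemma~\ref{lem:lp}. Since every proper face of $Z$ is the maximizer set of some such $L$, every edge $F$ of $Z$ has this form.

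Next I would invoke general position. General position makes the generators pairwise linearly independent, so for $d \ge 2$ the Minkowski sum $\sum_{i \in Z_0(L)} S_i$ is a point when $Z_0(L) = \varnothing$, a segment when $|Z_0(L)| = 1$, and at least two‑dimensional when $|Z_0(L)| \ge 2$. Hence $F$ is an edge only if $Z_0(L) = \{i\}$ for a single index $i$, in which case $F = v_L + S_i$ is a translate of the $i$‑th generator segment; in particular $F$ has length $|g_i|$ and spans the line $\R g_i$. Conversely, for each $i$ there is some $L$ with $L(g_i)=0$ and $L(g_j)\neq 0$ for all $j\neq i$: the functionals vanishing on $g_i$ form a hyperplane $g_i^{\perp}$ in the dual, and $g_i^{\perp} \not\subseteq g_j^{\perp}$ exactly because $g_i \not\parallel g_j$, so one need only avoid finitely many proper subspaces inside $g_i^{\perp}$. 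Such an $L$ produces an edge parallel to $g_i$. Therefore the edge set of $Z$ is partitioned by parallelism into exactly $n$ nonempty classes $E_1,\dots,E_n$, with each $E_i$ consisting of translates of $S_i$.

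Finally I would read off the answer: every edge in $E_i$ has the same length $|g_i|$ and spans the line $\R g_i$, so $E_i$ determines $g_i$ up to sign — which is the best possible, since replacing $g_i$ by $-g_i$ only translates $Z$ and so is invisible from the edge set. In particular the number of classes is exactly $n$, and no expression of $Z$ by fewer pairwise non‑parallel generators could yield edges in $n$ distinct parallelism classes, so $n$ is the rank in the sense of Definition~\ref{def:cubical}. The one point requiring care is the dimension count for $\sum_{i \in Z_0(L)} S_i$ together with the fact that ``general position'' in the paper's sense supplies pairwise linear independence of the generators; both are routine, so I do not anticipate a genuine obstacle. (For $d=1$ the statement is trivial or vacuous, and I would simply assume $d \ge 2$.)
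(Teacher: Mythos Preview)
Your argument is correct and follows the classical ``zone'' description of a zonotope, which is a different route from the paper's. The paper first asserts (via the vertex structure) that two adjacent vertices differ by exactly one generator, so every edge is a translate of some $\pm g_i$; it then proposes to recover the signs by exhausting the $2^n$ choices and picking the one that yields a translate of $Z$. You instead use the support-face formula for a Minkowski sum to identify edges with those functionals $L$ having $|Z_0(L)|=1$, and then read off the generators from the resulting parallelism classes. What your approach buys is transparency: it makes explicit why general position (hence pairwise linear independence of the $g_i$) is exactly what forces an edge to come from a single index, and it supplies the converse (every $g_i$ actually appears as an edge direction) by a simple genericity argument in $g_i^\perp$. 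You also correctly flag that the edge set only determines the $g_i$ up to sign, since a sign flip merely translates $Z$; note that the paper's exhaustion step does not in fact resolve this ambiguity either, because all $2^n$ sign patterns produce translates of $Z$. Your ``up to sign'' conclusion is the honest one and is all that is really needed downstream.
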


\begin{proof}
Since all vertices of $Z$ are sums of generators, and two vertices of $Z$ have an edge between them if and only if their respective sums differ by one generator, it follows that the set of all edges of $Z$ are translated generators, perhaps with a minus sign. Thus we only need to determine the appropriate sign for each putative generator coming from an edge. Let $\{ \sigma_1 g_1, ... , \sigma_n g_n\}$ be the set of distinct putative generators coming from the edges of $Z$, where $\sigma_i \in \{-1,1\}$ are unknown. There are $2^n$ possible zonotopes obtained by varying $\sigma_i$. Since $Z$ is general position, these are all distinct zonotopes. So we may simply exhaust all possibilities for $\sigma_i$ to find a zonotope which is a translation of our original $Z$. 
\end{proof}

To properly define the space of zonotopes $\ZZ_n(\R^d)$, we will parameterize it using the generator matrix $Q$ and the translation vector $\mu$. With this in mind, we define:
\[ \widetilde{\ZZ_n}(\R^d) = \{ (Q,\mu) \in \R^{n \times d} \times \R^d\} \]
Since the symmetric group $S_n$ acts on this space by permuting the columns of $Q$, and the ordering of these columns does not affect the underlying zonotope $Z(Q,\mu)$, we also quotient by this action. Thus we define the space of zonotopes as:
\[ \ZZ_n(\R^d) := \widetilde{\ZZ_n}(\R^d) / S_n \]
In practice, given a zonotope $Z \in \ZZ_n(\R^d)$, we choose the representative $(Q,\mu)$ such that the columns of $Q$ are lexicographically ordered.


\begin{prop}
Let $Z$ be a zonotope such that $Z = Z(Q,\mu)$ and $Z = Z(Q',\mu')$, where the columns of $Q$ and $Q'$ are in general position, respectively. Then $\mu = \mu'$ and the columns of $Q$ are obtained by permuting the columns of $Q'$.
\end{prop}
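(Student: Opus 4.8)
The plan is to reduce the statement to Lemma~\ref{lem:zono:determined}, which does the real work. Since $Z(Q,\mu) = Z(Q',\mu') = Z$ as subsets of $\R^d$, the two representations produce literally the same polytope, hence the same set of $1$-faces. Both $Q$ and $Q'$ have columns in general position, so both presentations exhibit $Z$ as a general position zonotope, and Lemma~\ref{lem:zono:determined} says that the edge set of $Z$ uniquely determines its generators together with its rank. Applying this to each representation, I would conclude that the columns of $Q$ and the columns of $Q'$ form the same subset of $\R^d$; in particular the two matrices have the same number of columns $n = \mathrm{rank}(Z)$, and, since general position forces the columns to be pairwise distinct, $Q'$ is obtained from $Q$ by permuting its columns.

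It then remains to check $\mu = \mu'$. Write $g_1, \dots, g_n$ for the common set of columns. The Minkowski sum $\sum_{i=1}^n [\mathbf 0, g_i]$ equals $QI^n$ and also equals $Q'I^n$, because a Minkowski sum of segments does not depend on the order in which the segments are summed; call this common set $Z_0$. Then $Z = Z_0 + \mu = Z_0 + \mu'$, so the vector $\mu - \mu'$ is a translational symmetry of the bounded nonempty set $Z_0$. Iterating, $Z_0 + k(\mu - \mu') = Z_0$ for every integer $k$, which is impossible for a bounded set unless $\mu - \mu' = \mathbf 0$. Hence $\mu = \mu'$.

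I do not expect a genuine obstacle here: all of the substance has been isolated into Lemma~\ref{lem:zono:determined}, and the two remaining ingredients---permutation invariance of Minkowski sums and rigidity of bounded sets under translation---are elementary. The one point deserving a sentence of care is confirming that $Q$ and $Q'$ have equally many columns before asserting that a permutation relates them; this is exactly the conclusion about the rank in Lemma~\ref{lem:zono:determined}, invoked for each of the two representations.
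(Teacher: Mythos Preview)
Your argument is correct and, like the paper, rests on Lemma~\ref{lem:zono:determined} for the identification of the generator sets. The only substantive difference is in the order and in how $\mu=\mu'$ is obtained. The paper first asserts $\mu=\mu'$ from the one-line observation ``since $0\in I^n$'' and then invokes Lemma~\ref{lem:zono:determined}; you reverse the order, first pinning down that $QI^n=Q'I^n$ and then deducing $\mu=\mu'$ from the fact that a bounded set has no nontrivial translational self-symmetry. Your route is a bit longer but more self-contained: the paper's ``$0\in I^n$'' step is terse to the point of being cryptic (it really only yields $\mu,\mu'\in Z$ without further comment), whereas your rigidity argument is explicit and robust. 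Your extra care in noting that Lemma~\ref{lem:zono:determined} also fixes the rank, so that $Q$ and $Q'$ have the same number of columns before one speaks of a permutation, is a point the paper glosses over.
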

\begin{proof}
Let $I^n \subset \R^n$ be the unit cube. By definition, we have $Q(I^n) + \mu = Q'(I^n) + \mu'$. Since $0 \in I^n$, this implies $\mu = \mu'$. Since $Z$ is a general position zonotope, its set of generators is uniquely determined, by Lemma \ref{lem:zono:determined}. Therefore the columns of $Q$ and $Q'$ are the same up to permutation.
\end{proof}

\section{The Hausdorff Distance}
\label{sec:haus}

Recall we are fixing a polytope $P \subset \R^d$ and wish to optimize the function:
\[ d_P : \ZZ_n(\R^d) \to \R \]
\[ Z \mapsto d(P,Z) \]
The Hausdorff distance between polytopes has some computationally convenient properties that we describe here. In general, for any convex set $Q$, the function $x \mapsto d(x,Q)$ is convex \cite{Kon14}. Since a convex function on a polytope is achieved at a vertex, the value of $\sup_{p\in P} d(p,Q)$ is achieved at a vertex of $P$. For future convenience, for any closed set $W \subset \R^d$ and point $p \in \R^d$, we denote $\Pi(p,W) := \argmin_{w \in W} \lN p-w \rN_2$, which is the projection of $p$ onto $W$. In this setting, the Hausdorff distance between two polytopes $P$ and $Q$ is:
\begin{align}\label{eq:Hausdorff-distance}
d(P,Q) &=  \max\left( \max_{p \in \VERT(P)} d(p,Q), \max_{q \in \VERT(Q)} d(q,P) \right) 
\end{align}
Computationally, this means that calculating the Hausdorff distance is reduced to computing the distance between a point and a polytope. This can be formulated as a quadratic program, and hence be solved in polynomial time. For an account of this, consult e.g. \cite{Kon14}. The following similar expression for the Hausdorff distance between two polytopes will be useful for us:
\begin{prop}
Let $P$ and $Q$ be polytopes. For every vertex $p \in \VERT(P)$, denote $W_p$ to be the affine hull of the smallest face\footnote{Keep in mind $Q$ itself is also a face of $Q$. So if $p$ is in the interior of $Q$, then the smallest face of $Q$ containing $p$ is $Q$ itself.} of $Q$ containing $\Pi(p,Q)$. For $q \in \VERT(Q)$, we define $W_q$ equivalently. Then:
\begin{equation}\label{eq:alt-Hausdorff}
d(P,Q) = \max\left( \max_{p \in \VERT(P)} d(p, W_p), \max_{q \in \VERT(Q)} d(q, W_q) \right)
\end{equation}
where $d(x,W)$ is the distance between a point $x$ and an affine subspace $W$
\end{prop}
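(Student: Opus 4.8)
The plan is to reduce the proposition to the pointwise claim that $d(p,Q) = d(p,W_p)$ for each $p \in \VERT(P)$, together with the symmetric claim $d(q,P) = d(q,W_q)$ for each $q \in \VERT(Q)$; once these are in hand, the statement follows by substituting into \eqref{eq:Hausdorff-distance}. So I fix a vertex $p \in \VERT(P)$, set $y := \Pi(p,Q)$, and let $F$ be the smallest face of $Q$ containing $y$, so that $W_p = \aff(F)$. The key structural input is the standard fact (valid for any polytope, see e.g. \cite{GO97}) that every point of a polytope lies in the relative interior of exactly one face; by minimality of $F$, that face is $F$, i.e. $y \in \operatorname{relint}(F)$.

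First I would record the easy inequality $d(p,W_p) \le d(p,Q)$. Since $y \in F \subseteq Q$ and $y$ minimizes $\lN p - z \rN_2$ over $z \in Q$, it a fortiori minimizes it over $z \in F$, so $y = \Pi(p,F)$ and $d(p,F) = \lN p - y\rN_2 = d(p,Q)$; as $F \subseteq \aff(F) = W_p$, the distance cannot increase, giving $d(p,W_p) \le d(p,F) = d(p,Q)$. For the reverse inequality it suffices to show the orthogonal projection $y^{*} := \Pi(p,W_p)$ lies in $Q$ — in fact I claim $y^{*} = y$. Let $L := W_p - y$ be the linear subspace parallel to $W_p$. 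For any $v \in L$, the relative-interior condition provides $\varepsilon > 0$ with $y \pm \varepsilon v \in F$, and the variational characterization of the projection $y = \Pi(p,F)$ onto the convex set $F$ gives $\langle p - y,\, \pm \varepsilon v\rangle \le 0$, hence $\langle p - y, v\rangle = 0$. Thus $p - y \perp L$, which is exactly the condition characterizing $y$ as the orthogonal projection of $p$ onto the affine space $W_p$; therefore $y^{*} = y \in Q$ and $d(p,W_p) = \lN p - y\rN_2 = d(p,Q)$.

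The claim for $q \in \VERT(Q)$ is proven identically, exchanging the roles of $P$ and $Q$. I do not anticipate a serious obstacle: the only nontrivial step is the relative-interior argument in the previous paragraph, and even that is short given the tools already available. The minor points to be careful about are the degenerate cases — for instance, when $\Pi(p,Q)$ lies in the interior of $Q$, so $F = Q$ and (if $\dim Q = d$) $W_p = \R^d$, in which case $p \in Q$ and both sides vanish — and invoking the face-lattice fact that a polytope point lies in the relative interior of a unique face.
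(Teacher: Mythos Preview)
Your proof is correct and follows the same route as the paper: both reduce to the pointwise identity $d(p,Q)=d(p,W_p)$ and then substitute into \eqref{eq:Hausdorff-distance}. The paper's own proof simply asserts this identity as ``immediate,'' whereas you supply the relative-interior/variational argument that actually justifies it; your version is more complete but not a different approach.
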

\begin{proof}
This is immediate from (\ref{eq:Hausdorff-distance}) and the fact that $d(p,W_p)$ is the same as the distance between $p$ and the smallest face of $P$ containing $p$.
\end{proof}

We will also consider a slightly simpler function $d^c_P: \ZZ_n(\R^d) \to \R$ which we call the \emph{coarse Hausdorff distance}. For any polytopes $P$ and $Q$, we define:
\begin{align}
d^c(P,Q) &:= d(\VERT(P),\VERT(Q))  \\
&= \max\left( \max_{p\in \VERT{P}} \min_{q \in \VERT{Q}} d(p,q), \max_{q'\in \VERT{Q}} \min_{p' \in \VERT{P}} d(p',q') \right) \label{eq:dc}
\end{align}
whence $d_P^c(Z) := d_P^c(P,Z)$. In this case, we are only considering the vertices of $P$ and $Z$ to measure the distance between them. It is clear from the definition that $d_P \leq d_P^c$. Though $d_P^c$ is an upper bound for $d_P$, we will see in Section \ref{sec:opt} that $d_P^c$ can have local minima where $d_P$ does not. Thus we cannot simply minimize $d_P^c$ to minimize $d_P$.

We establish the following convention for $d(P,Q)$. Hereafter, whenever we say a pair $(p,q) \in P \times Q$ achieves the Hausdorff distance between $P$ and $Q$, we mean that $d(P,Q) = \lN p-q\rN_2$ and at least one of $p$ or $q$ is a vertex of its respective polytope. Since $P$ and $Q$ are polytopes, the Hausdorff distance is always achieved at at least one pair in this manner.

Given a polytope $P \subset \R^d$ and a point $x \in \R^d$, any point that achieves the distance $d(x,P)$ has the following useful characterization:

\begin{lem}
Let $P\subset \R^d$ be a polytope and $x \in \R^d$. Then for any $p\in P$, we have $\lN x-p\rN_2 = d(x,P)$ if and only if $x-p$ is contained in the normal cone of $p$.
\end{lem}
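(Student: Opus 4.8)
The plan is to recognize this as the standard variational (first-order) characterization of the metric projection onto a convex set, specialized to the polytope $P$, and to prove the two implications directly using the convexity of $P$. Recall that the normal cone of $p$ is $N_P(p) = \{ v \in \R^d : \langle v, y - p\rangle \le 0 \text{ for all } y \in P \}$; this is the object the statement refers to, and the proof will simply relate this inequality to the squared-distance function.

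For the ``if'' direction, I would assume $x - p \in N_P(p)$ and show $p$ realizes $d(x,P)$. Given any $y \in P$, I expand
\[ \lN x - y \rN_2^2 = \lN (x-p) - (y-p) \rN_2^2 = \lN x-p \rN_2^2 - 2\langle x-p, y-p\rangle + \lN y-p \rN_2^2 . \]
The middle term is $\ge 0$ because $x-p \in N_P(p)$ gives $\langle x-p, y-p \rangle \le 0$, and the last term is $\ge 0$ trivially, so $\lN x - y \rN_2 \ge \lN x - p \rN_2$ for every $y \in P$; hence $\lN x-p\rN_2 = d(x,P)$.

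For the ``only if'' direction, I would assume $\lN x-p\rN_2 = d(x,P)$ and fix an arbitrary $y \in P$. Since $P$ is convex, the segment $p + t(y-p)$ lies in $P$ for $t \in [0,1]$, so the function $f(t) = \lN x - p - t(y-p)\rN_2^2 = \lN x-p\rN_2^2 - 2t\langle x-p, y-p\rangle + t^2 \lN y-p \rN_2^2$ attains its minimum over $[0,1]$ at $t = 0$. Thus $f'(0^+) \ge 0$, i.e.\ $-2\langle x-p, y-p\rangle \ge 0$, giving $\langle x-p, y-p\rangle \le 0$. As $y \in P$ was arbitrary, $x - p \in N_P(p)$.

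I do not expect any real obstacle here: the only points requiring a word of care are that $P$ is convex (immediate, being a polytope) so the segment argument is valid, and that the normal cone is taken with respect to $P$ at the point $p$. Everything else is a one-line expansion of a squared norm.
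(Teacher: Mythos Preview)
Your proof is correct and is the standard variational characterization of the Euclidean projection onto a closed convex set. The paper does not actually supply a proof of this lemma: it simply cites Theorem~5.21 of \cite{Bur14}, so your self-contained argument gives strictly more than the paper does here.
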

\begin{proof}
See \cite{Bur14}, Theorem 5.21.
\end{proof}

\begin{defn}
For a polytope $P \subset \R^d$ and a point $x \notin P^\circ$, we say that $x$ is \emph{Hausdorff stable relative to $P$} if $x$ is contained in the relative interior of the normal cone to $P$ at $\Pi(x,P)$ (see Figure \ref{fig:Hausdorff-stable}). If $x \in P^\circ$, we also say that it is Hausdorff stable, for reasons explained in the next remark.
\end{defn}

\begin{rem}\label{rem:Hausdorff-stable}
A point that is Hausdorff stable relative to $P$ has the property that there exists an open neighborhood $U \ni x$ such that for any $y \in U$, the smallest face of $P$ containing $\Pi(y,P)$ is the same as the smallest face of $P$ containing $\Pi(x,P)$. It is easy to see that Hausdorff stability is an open dense condition on $\R^d$, and it will be a necessary property in the next section when we develop a local theory of the Hausdorff distance $d_P$.
\end{rem}

Hausdorff stability is closely related to the normal fan of $P$ in the following way. If we define the \emph{normal complex} of $P$ to be the complex obtained by attaching to every $x \in \del P$ the normal cone at $x$, then the points in the skeleton of this complex are exactly the ones that are not Hausdorff stable.

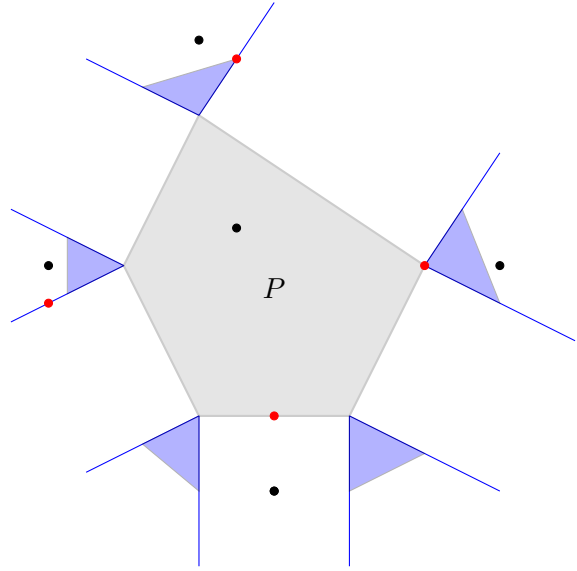
\begin{figure}
\begin{center}
\begin{tikzpicture}
\draw[fill=gray,opacity=0.2,thick] (0,0) -- (2,0) -- (3,2) -- (0,4) -- (-1,2) -- (0,0) -- cycle;
\draw[blue] (0,0) -- (0,-2);
\draw[blue] (0,0) -- (-1.5,-0.75);
\draw[fill=blue,opacity=0.3] (0,0) -- (0,-1) -- (-0.75,-0.375) -- cycle;
\draw[blue] (-1,2) -- (-1-1.5,2-0.75);
\draw[blue] (-1,2) -- (-2.5,2.75);
\draw[fill=blue,opacity=0.3] (-1,2) -- (-1.75,2.375) -- (-1-0.75,2-0.375) -- cycle;
\draw[blue] (0,4) -- (-2.5+1,2.75+2);
\draw[blue] (0,4) -- (1,5.5);
\draw[fill=blue,opacity=0.3] (0,4) -- (0.5,4.75) -- (-0.75,4.375) -- cycle;
\draw[blue] (3,2) -- (1+3,5.5-2);
\draw[blue] (3,2) -- (5,1);
\draw[fill=blue,opacity=0.3] (3,2) -- (3.5,2.75) -- (4,1.5) -- cycle;
\draw[blue] (2,0) -- (5-1,1-2);
\draw[blue] (2,0) -- (2,-2);
\draw[fill=blue,opacity=0.3] (2,0) -- (3,-0.5) -- (2,-1) -- cycle;
\node at (1,1.7) {$P$};
\draw[fill=red,draw=red] (1,0) circle (1.5pt);
\draw[fill=red,draw=red] (0.5,4.75) circle (1.5pt);
\draw[fill=red,draw=red] (-2,1.5) circle (1.5pt);
\draw[fill=red,draw=red] (3,2) circle (1.5pt);
\draw[fill=black,draw=black] (0.5,2.5) circle (1.5pt);
\draw[fill=black,draw=black] (-2,2) circle (1.5pt);
\draw[fill=black,draw=black] (1,-1) circle (1.5pt);
\draw[fill=black,draw=black] (1,-1) circle (1.5pt);
\draw[fill=black,draw=black] (0,5) circle (1.5pt);
\draw[fill=black,draw=black] (4,2) circle (1.5pt);
\end{tikzpicture}
\end{center}
\caption{A polytope with normal cones at each vertex attached (blue). All points colored red are not Hausdorff stable relative to $P$ and all points colored black are Hausdorff stable relative to $P$.}
\label{fig:Hausdorff-stable}
\end{figure}

\section{Local Theory of $d_P$}
\label{sec:local}

In this section, we will show how to express $d_P$ as an explicit maximum of differentiable functions in a neighborhood of a generic point $Z \in \ZZ_n(\R^d)$. To do this, we will need to show that, given a sufficiently small perturbation $Z \to Z'$, we can identify the boundaries of $Z$ and $Z'$ naturally and therefore express the Hausdorff distance as a function of the generators of $Z$. This will allow us to calculate the gradient (or subgradient) of $d_P$ and, in the next section, will also give us the tools necessary to come up with direction finding criteria for gradient descent on $d_P$.

Our intent is to identify points on the boundary of a zonotope $Z \in \ZZ_n(\R^d)$ with their corresponding points in the unit cube $I^n$ that map to them (i.e. the lift of $\del Z$ to $I^n$). If we perturb $Z \to Z'$ by a small amount, we then might be able to identify the boundary of $Z$ with the boundary of $Z'$ via their lifts. More precisely, given $q \in \del Z$, let $\Lift(q):=A_Z^{-1}(q) \cap I^n$ be the set of points in the unit cube that map to $q$. If $\Lift(q)$ happens to be a unique point, then given any other zonotope $Z'  \in \ZZ_n(\R^d)$, there is a well-defined mapping $\varphi: \del Z \to Z'$ defined by $\varphi(q) =A_{Z'}(\Lift(q))$. We call $\varphi: \del Z \to Z'$ the \emph{pushforward map}. It identifies $\del Z$ with a subset of $Z'$ in a natural way.

\begin{defn}
Given a zonotope $Z \in \ZZ_n(\R^d)$, we say that a point $q \in \del Z$ is \emph{stable} if $\Lift(q)$ is a single point. We say that $Z$ is stable if every point on its boundary is stable.
\end{defn}

\begin{prop}
If $Z \in \ZZ_n(\R^d)$ is a general position zonotope, then $Z$ is stable.
\end{prop}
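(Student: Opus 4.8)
The plan is to show that for a general position zonotope $Z$, every boundary point $q \in \del Z$ has $\Lift(q)$ equal to a single point of $I^n$. First I would reduce to the case of points in the relative interiors of proper faces of $Z$: since $\del Z$ is the disjoint union of the relative interiors of its proper faces, and since $\Lift$ is constant on such relative interiors up to the obvious affine identification (the preimage $A_Z^{-1}(F)$ of a face $F$ is itself a face of $I^n$ by Lemma \ref{lem:zonotope}, once we know $A_Z$ is injective on it), it suffices to check that each proper face $F$ of $Z$ has a unique preimage face $\tilde F \subset I^n$ with $A_Z(\tilde F) = F$ and $A_Z|_{\tilde F}$ injective.

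The key step is to identify the faces of a zonotope combinatorially. A face $F$ of $Z = \sum_i S_i$ is obtained by choosing a linear functional $L$; then $F = \sum_i (S_i)_L$, where $(S_i)_L$ is the face of the segment $S_i$ maximizing $L$ — this is either an endpoint of $S_i$ (if $L(g_i) \neq 0$) or all of $S_i$ (if $L(g_i) = 0$). So a face corresponds to a partition of the generator indices into $J_+ = \{i : L(g_i) > 0\}$, $J_- = \{i : L(g_i) < 0\}$, and $J_0 = \{i : L(g_i) = 0\}$, and the corresponding preimage in $I^n$ is the subcube fixing coordinate $1$ on $J_+$, coordinate $0$ on $J_-$, and leaving $J_0$ free. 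The point is that $A_Z$ restricted to this subcube is injective precisely when $\{g_i\}_{i \in J_0}$ is linearly independent, and the preimage face is unique precisely when no other index-partition yields the same face $F$. General position gives exactly these two things: if $\{g_i\}_{i \in J_0}$ with $|J_0| \le d$ were dependent that would violate general position, and two distinct partitions giving the same face $F$ would force a coincidence among the affine hulls spanned by subsets of generators, again a non-generic condition. I would make this precise by combining Lemma \ref{lem:lp} (which handles the vertex case: a cubical vertex is a genuine vertex iff the separating hyperplane exists, and general position ensures the separation, when it exists, is realized by a unique index set) with an induction on face dimension, peeling off the free directions $J_0$.

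The main obstacle I expect is pinning down exactly which genericity statement is being invoked and showing it is implied by "generators in general position" as that phrase is used earlier (generators, together with the origin, spanning all intermediate affine subspaces as expected). Concretely, one must rule out the possibility that a point $q$ in the relative interior of a positive-dimensional face $F$ has two different lifts — equivalently, that the fiber $A_Z^{-1}(q) \cap I^n$ is a segment or higher-dimensional polytope rather than a point. This happens iff $\ker Q$ meets the cone of feasible directions at the relevant subcube, which is a linear-algebraic condition on the generators $g_i$ that general position excludes; the bookkeeping of which generators are "active" (clamped to $0$ or $1$) versus "free" at $q$ is where the care is needed. Once that linear-independence-of-active-generators statement is established, uniqueness of the lift — hence stability of $Z$ — follows immediately, and the proposition is proved.
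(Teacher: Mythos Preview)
Your approach is correct and genuinely different from the paper's. The paper argues in two stages: first it shows every vertex $v$ of $Z$ has a unique lift by a descent argument on faces of $I^n$ (if two cube vertices $e,e'$ map to $v$, pass to the smallest cube face containing them and iterate until you reach an edge, which would force a zero generator); then it extends to all boundary points by a dimension count on facets. You instead go straight to the standard combinatorial description of zonotope faces via sign partitions $(J_+,J_-,J_0)$ of the generators, identify the preimage of each face $F$ as the subcube of $I^n$ with coordinates clamped on $J_\pm$ and free on $J_0$, and then invoke general position to get $|J_0|\le d-1$ (since the $g_i$ with $i\in J_0$ all lie in the hyperplane $\ker L$) and hence linear independence of $\{g_i\}_{i\in J_0}$, which is exactly injectivity of $A_Z$ on that subcube. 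This is cleaner and more conceptual, at the cost of importing the face description of zonotopes rather than working from scratch as the paper does.

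Two small corrections. First, the ``uniqueness of the preimage face'' that you flag as the main obstacle is not a separate genericity condition: for any $q$ in the relative interior of $F$, any lift $x\in I^n$ of $q$ must maximize the linear functional $L\circ A_Z$ on $I^n$, and the maximizer set of a linear functional on a cube is a single face --- namely your subcube. So uniqueness is automatic and the only place general position enters is the injectivity step. Second, Lemma~\ref{lem:zonotope} does not actually assert that preimages of faces are faces; that fact is true but comes from the supporting-functional argument just described, not from that lemma.
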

\begin{proof}
We first claim that all vertices of $Z$ are stable. It suffices to assume that $A_Z$ is linear since translation does not affect stability. Let $v \in \VERT(Z)$ and suppose that $\Lift(v)$ contains more than one point. Since $\VERT(Z) \subset A_Z(C^n)$, where $C^n$ are the vertices of $I^n$, there exist $e,e' \in \Lift(v)$ distinct that are in $C^n$. Let $F \subset I^n$ be the smallest face containing $e$ and $e'$. If $F$ is a 1-face, i.e. an edge of $I^n$, then $e-e'$ is a standard unit vector $e_i$ in $\R^n$, up to sign. Since $0=A_Z(e)-A_Z(e') = A_Z(e-e')$, this would imply that the $i$th column of $Q_Z$ is zero. This contradicts our assumption that the generators of $Z$ are in general position. 

Therefore $F$ must be a $k$-face for $k>1$. Let $\ell = [e,e']$ be the line segment with endpoints $e$ and $e'$. The face $F$ is a cube of dimension $k$ and since it is the minimal one containing the vertices $e$ and $e'$, the segment $\ell$ is a diagonal of $F$. Consider the image $A_Z(F)$, which contains $v$. Since the diagonal of $F$ is mapped to a vertex of $A_Z(F)$, it follows that $v$ is a vertex of $A_Z(F)$. This means that there exists another vertex $e'' \in F$ that is mapped to $v$ as well. Letting $F' \subset F$ be the smallest face containing $e''$ and $e'$, we can repeat this process again, and since $F'$ will always be of strictly smaller dimension than $F$, it will terminate at $F'$ being a line segment, which is the $k=1$ case we have already ruled out. Therefore every $v \in \VERT(Z)$ has a unique lift.

Now to show that $Z$ is stable, consider any facet $X \subset Z$ and let $S$ be the set of vertices of $X$. Since $Z$ is a general position zonotope, every facet has exactly $d+1$ vertices. Therefore $\dim(X) = |S|-1$. Additionally, is straightforward to show that, because of linearity of $A_Z$, we have:
\[ \Lift(\Conv(S)) = \Conv(\Lift(S)) \]
Since every element of $S$ is stable, $|\Lift(S)| = |S|$. Therefore the dimension of $\Lift(F)$ is at most $|S|-1 = \dim(X)$. Therefore every element of $X$ has a unique lift, and so $Z$ is stable.
\end{proof}

\begin{cor}
If $q \in \del Z$ is stable, then so is every other point on the minimal face of $Z$ containing $q$.
\end{cor}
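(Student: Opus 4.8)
The plan is to phase everything through the affine presentation $A_Z\colon I^n\to\R^d$ of $Z$ and to ``peel off'' the minimal face of the cube carrying the (unique) lift of $q$. Set $q_0:=q$, let $F$ be the minimal face of $Z$ containing $q_0$ (so $q_0$ lies in the relative interior of $F$), let $\{x_0\}=\Lift(q_0)=A_Z^{-1}(q_0)\cap I^n$ be the lift guaranteed by stability, and let $\tau$ be the minimal face of $I^n$ containing $x_0$, so that $x_0$ lies in the relative interior of $\tau$. The goal is to show that every $q'\in F$ is stable, and I will do this by establishing (i) $A_Z$ is injective on $\tau$, and (ii) $\Lift(q')\subseteq\tau$ for every $q'\in F$; together these force $\Lift(q')$ to be a single point.

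Step (i) is routine. If a nonzero vector $v$ in the kernel of the linear part of $A_Z$ were a direction of $\aff\tau$, then $x_0+tv$ would remain in the relative interior of $\tau$, hence in $I^n$, for all small $|t|$ while still mapping to $q_0$, contradicting $\Lift(q_0)=\{x_0\}$. Hence the linear part of $A_Z$ is injective on the direction space of $\aff\tau$, so $A_Z|_\tau$ is an injective affine map.

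Step (ii) is the crux. Fix $q'\in F$ and any $y'\in\Lift(q')$. Since $q_0$ is in the relative interior of $F$, there is $\epsilon>0$ with $q_1:=q_0+\epsilon(q_0-q')\in F$, so that $q_0=\tfrac1{1+\epsilon}q_1+\tfrac{\epsilon}{1+\epsilon}q'$ is a convex combination with strictly positive weights. Picking any lift $y_1\in\Lift(q_1)$ (which exists since $q_1\in Z$), the point $z:=\tfrac1{1+\epsilon}y_1+\tfrac{\epsilon}{1+\epsilon}y'$ lies in $I^n$ by convexity and satisfies $A_Z(z)=q_0$ by affinity, hence $z=x_0$. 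Thus $x_0$ is a strictly-positive convex combination of $y_1$ and $y'$; since $\tau$ is a face of $I^n$, choosing a linear functional $\ell$ whose set of maximizers over $I^n$ is exactly $\tau$ and using $x_0\in\tau$ forces $\ell(y_1)=\ell(y')=\max_{I^n}\ell$, i.e.\ $y_1,y'\in\tau$. In particular $\Lift(q')\subseteq\tau$. The point to get right here is which cube-face to peel off: one must use $\tau=$ the minimal face of $I^n$ through $x_0$ rather than, say, the face of $I^n$ exposed by a functional exposing $F$, since those need not coincide.

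Finally, for each $q'\in F$ we combine the two steps: $\Lift(q')=A_Z^{-1}(q')\cap I^n\subseteq\tau$ by (ii), while $A_Z|_\tau$ is injective by (i), so $\Lift(q')$ has at most one element; it is nonempty because $q'\in F\subseteq Z=A_Z(I^n)$; hence $\Lift(q')$ is a single point and $q'$ is stable. As $q'$ was an arbitrary point of the minimal face of $Z$ containing $q$, the corollary follows.
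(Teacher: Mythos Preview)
Your proof is correct. The paper states this corollary without proof, evidently intending it to fall out of the argument in the preceding proposition: there one first shows that every \emph{vertex} of a general position zonotope is stable, and then deduces stability of a facet $X$ from the identity $\Lift(\Conv(S))=\Conv(\Lift(S))$ for $S=\VERT(X)$ together with a dimension count. That route implicitly leans on the general-position hypothesis and on bootstrapping from vertex stability.

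Your argument is genuinely different and in fact cleaner and more general. You work directly from stability of the single point $q$: the minimal cube-face $\tau$ through its unique lift $x_0$ is exactly the right object, and the two steps --- (i) injectivity of $A_Z|_\tau$ from the relative-interior position of $x_0$, and (ii) the convexity/face trick showing every lift of any $q'\in F$ must land in $\tau$ --- require no assumption on the generators and no prior knowledge about vertices. The exposed-face argument in (ii) (using a functional $\ell$ maximized exactly on $\tau$) is the standard and correct way to push both $y_1$ and $y'$ into $\tau$. Your remark about peeling off the minimal cube-face through $x_0$ rather than some face exposed by a functional for $F$ is well taken; that is indeed the subtle choice that makes the argument go through.
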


\begin{cor}\label{cor:stable}
If $Z$ is stable and $q \in \del Z$ is a cubical vertex, then $q$ is a vertex of $Z$.
\end{cor}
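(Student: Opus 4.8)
The plan is to run everything through the lift of $\del Z$ to the cube $I^n$. Since $q\in\del Z$, the first step is to choose a supporting hyperplane of $Z$ at $q$, say $H=\{x:L(x)=c\}$ with $L$ linear, $Z\subseteq\{L\le c\}$ and $Z\not\subseteq H$, and to set $F:=Z\cap H=\argmax_{x\in Z}L(x)$, so that $F$ is a face of $Z$ with $q\in F$. The key point is to pull $L$ back: $L\circ A_Z$ is an affine function on $I^n$, and since $A_Z(I^n)=Z$ its maximum over $I^n$ is again $c$, attained exactly on the face $\widetilde F:=\{x\in I^n: L(A_Z(x))=c\}$ of $I^n$. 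One checks immediately that $x\in I^n$ satisfies $A_Z(x)\in F$ iff $x\in\widetilde F$, so $A_Z^{-1}(F)\cap I^n=\widetilde F$ and $A_Z(\widetilde F)=F$; in particular $\Lift(y)=A_Z^{-1}(y)\cap\widetilde F$ for every $y\in F$.

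The second step is to invoke stability. Because $F\subseteq\del Z$ and $Z$ is stable, every $y\in F$ has $|\Lift(y)|=1$, which forces $A_Z$ to be injective on $\widetilde F$; an affine map that is injective on a polytope carries that polytope isomorphically onto its image, so $A_Z$ restricts to an affine isomorphism $\widetilde F\to F$ matching vertices with vertices. Now use that $q$ is a cubical vertex: $q=A_Z(e)$ for some $e\in C^n$, and then $e\in A_Z^{-1}(q)\cap I^n=\Lift(q)\subseteq\widetilde F$, so the cube vertex $e$ lies in the face $\widetilde F$ and is therefore a vertex of $\widetilde F$. Hence $q=A_Z(e)$ is a vertex of $F$, and since a vertex of a face of $Z$ is a vertex of $Z$, we conclude $q\in\VERT(Z)$. (One could instead try to produce a separating hyperplane for the generators and apply Lemma \ref{lem:lp}, but routing the argument through the lift reuses the setup already in place and avoids casework.)

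I do not expect a real obstacle here; the only care needed is in the degenerate case where $Z$ is not full-dimensional, where ``supporting hyperplane'' should be taken inside $\aff(Z)$ so that $L\circ A_Z$ is genuinely non-constant on $I^n$ and $\widetilde F$ is a proper face of $I^n$ --- after that the argument is verbatim. The remaining ingredients (a vertex of $I^n$ lying in a face of $I^n$ is a vertex of that face; an injective affine map sends vertices to vertices; a vertex of a face of a polytope is a vertex of the polytope) are standard and need no work.
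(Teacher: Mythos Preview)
Your argument is correct. The paper states this corollary without proof, so there is nothing to compare against directly; your approach via the supporting functional and the lifted face $\widetilde F$ is a clean way to fill the gap, and it is in the same spirit as the dimension-counting in the final paragraph of the proof of the preceding Proposition (which establishes $\Lift(\Conv(S))=\Conv(\Lift(S))$ and uses stability of vertices to bound $\dim\Lift(F)$). The one place to be slightly more explicit is the surjectivity $A_Z(\widetilde F)=F$: you use it when you say $A_Z$ restricts to an affine isomorphism $\widetilde F\to F$, and it follows because any $y\in F$ has some preimage in $I^n$, which then lies in $\widetilde F$ by the defining equation $L\circ A_Z=c$. With that sentence added, the chain ``$e\in C^n\cap\widetilde F\Rightarrow e\in\VERT(\widetilde F)\Rightarrow q=A_Z(e)\in\VERT(F)\Rightarrow q\in\VERT(Z)$'' is airtight, and your remark about working inside $\aff(Z)$ when $Z$ is not full-dimensional handles the only edge case.
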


The pushforward map $\varphi: \del Z \to Z'$ is only defined when $Z$ is stable. When the image of the pushforward $\varphi$ lies in $\del Z'$, we say it is \emph{proper}. For general pairs $Z,Z'$ the pushforward will not be proper, as in the next example.

\begin{exmp}
Fix $\epsilon \geq 0$ and let $Q = \begin{pmatrix} 1 & 1 & 2 \\ 2 & 1 & 0 \end{pmatrix}$ and $\Delta Q = \begin{pmatrix} 0 & -\epsilon & 0 \\ 0 & 0 & 0 \end{pmatrix}$. These define zonotopes $Z = Z(Q,0)$ and $Z' = Z(Q+ \Delta Q, 0)$. The image of the pushforward $\varphi: \del Z \to Z'$ for various values of $\epsilon$ is shown in Figure \ref{fig:pushforward}. We see that for $\epsilon \geq \frac{1}{2}$, the pushforward is not proper.
\end{exmp}

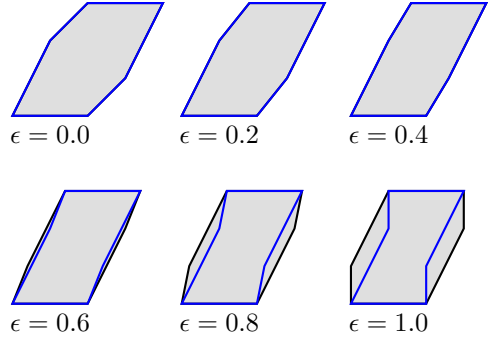
\begin{figure}
\begin{center}
\begin{tikzpicture}[scale=0.5]
\draw[fill=gray,thick,fill opacity=0.25] (4.0, 3.0) -- (2.0, 3.0) -- (1.0, 2.0) -- (0.0, 0.0) -- (2.0, 0.0) -- (3.0, 1.0) -- cycle;
\draw[thick,blue] (0.0, 0.0) -- (1.0, 2.0) -- (2.0, 3.0) -- (4.0, 3.0) -- (3.0, 1.0) -- (2.0, 0.0) -- (0.0, 0.0) -- cycle;
\node at (1.0, -0.5) {\small{$\epsilon=0.0$}};
\draw[fill=gray,thick,fill opacity=0.25] (8.3, 3.0) -- (6.3, 3.0) -- (5.5, 2.0) -- (4.5, 0.0) -- (6.5, 0.0) -- (7.3, 1.0) -- cycle;
\draw[thick,blue] (4.5, 0.0) -- (5.5, 2.0) -- (6.3, 3.0) -- (8.3, 3.0) -- (7.3, 1.0) -- (6.5, 0.0) -- (4.5, 0.0) -- cycle;
\node at (5.5, -0.5) {\small{$\epsilon=0.2$}};
\draw[fill=gray,thick,fill opacity=0.25] (12.6, 3.0) -- (10.6, 3.0) -- (10.0, 2.0) -- (9.0, 0.0) -- (11.0, 0.0) -- (11.6, 1.0) -- cycle;
\draw[thick,blue] (9.0, 0.0) -- (10.0, 2.0) -- (10.6, 3.0) -- (12.6, 3.0) -- (11.6, 1.0) -- (11.0, 0.0) -- (9.0, 0.0) -- cycle;
\node at (10.0, -0.5) {\small{$\epsilon=0.4$}};
\draw[fill=gray,thick,fill opacity=0.25] (3.4, -2.0) -- (1.4, -2.0) -- (0.3999999999999999, -4.0) -- (0.0, -5.0) -- (2.0, -5.0) -- (3.0, -3.0) -- cycle;
\draw[thick,blue] (0.0, -5.0) -- (1.0, -3.0) -- (1.4, -2.0) -- (3.4, -2.0) -- (2.4, -4.0) -- (2.0, -5.0) -- (0.0, -5.0) -- cycle;
\node at (1.0, -5.5) {\small{$\epsilon=0.6$}};
\draw[fill=gray,thick,fill opacity=0.25] (7.7, -2.0) -- (5.7, -2.0) -- (4.7, -4.0) -- (4.5, -5.0) -- (6.5, -5.0) -- (7.5, -3.0) -- cycle;
\draw[thick,blue] (4.5, -5.0) -- (5.5, -3.0) -- (5.7, -2.0) -- (7.7, -2.0) -- (6.7, -4.0) -- (6.5, -5.0) -- (4.5, -5.0) -- cycle;
\node at (5.5, -5.5) {\small{$\epsilon=0.8$}};
\draw[fill=gray,thick,fill opacity=0.25] (9.0, -5.0) -- (11.0, -5.0) -- (12.0, -3.0) -- (12.0, -2.0) -- (10.0, -2.0) -- (9.0, -4.0) -- cycle;
\draw[thick,blue] (9.0, -5.0) -- (10.0, -3.0) -- (10.0, -2.0) -- (12.0, -2.0) -- (11.0, -4.0) -- (11.0, -5.0) -- (9.0, -5.0) -- cycle;
\node at (10.0, -5.5) {\small{$\epsilon=1.0$}};

\end{tikzpicture}
\caption{The zonotope $Z'$ for various values of $\epsilon$, with the image of the pushforward $\varphi: \del Z \to Z'$ shown in blue.}
\label{fig:pushforward}
\end{center}
\end{figure}

\begin{lem}\label{lem:stable}
Suppose that $Z = Z(Q,\mu) \in \ZZ_n(\R^d)$ is stable. Then there exists $\epsilon > 0$ such that for all perturbations $Z' = Z(Q+ \Delta Q, \mu + \Delta \mu)$ satisfying $\lN \Delta Q \rN_2 < \epsilon$ the pushforward map $\varphi: \del Z \to Z'$ is proper.
\end{lem}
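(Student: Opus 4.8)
The plan is to reduce properness of $\varphi$ to a finite list of strict --- hence perturbation-stable --- inequalities on the columns $g_1,\dots,g_n$ of $Q$. Assume first that $Z$ is full-dimensional (if not, stability forces $Z$ to be a lower-dimensional parallelepiped, so $\del Z = Z$, $\del Z' = Z'$, and $\varphi(\del Z) = A_{Z'}(I^n) = Z' = \del Z'$ trivially). Write $\del Z = \bigcup_\alpha X_\alpha$ as the finite union of its facets, fix a facet $X_\alpha$ with outward unit normal $\hat{n}_\alpha$, and set $L_\alpha = \langle\,\cdot\,,\hat{n}_\alpha\rangle$, so $X_\alpha = \argmax_{x\in Z} L_\alpha(x)$. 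Since $A_Z(I^n)=Z$, the set $A_Z^{-1}(X_\alpha)\cap I^n$ is precisely the face of $I^n$ on which the linear functional $t\mapsto L_\alpha(A_Z(t)) = \langle Q_Z t + \mu_Z,\hat{n}_\alpha\rangle$ attains its maximum over $I^n$, namely
\[ F_\alpha := \{\, t\in I^n : t_i = 1 \text{ if } \langle g_i,\hat{n}_\alpha\rangle > 0,\ \ t_i = 0 \text{ if } \langle g_i,\hat{n}_\alpha\rangle < 0 \,\}, \]
so $\Lift(X_\alpha) = F_\alpha$ and $A_Z(F_\alpha) = X_\alpha$. Stability now forces $A_Z$ to be injective on $F_\alpha$: if $t,t'\in F_\alpha$ satisfy $A_Z(t) = A_Z(t') =: q$, then $t,t'$ both lie in the singleton $\Lift(q)$, so $t = t'$. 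Hence $\dim F_\alpha = \dim X_\alpha = d-1$, which means the ``free-coordinate'' set $S_\alpha := \{\, i : \langle g_i,\hat{n}_\alpha\rangle = 0\,\}$ has exactly $d-1$ elements. In particular $\langle g_j,\hat{n}_\alpha\rangle \ne 0$ \emph{strictly} for every $j\notin S_\alpha$, and $\{g_i\}_{i\in S_\alpha}$ is a basis of the direction space of $\aff X_\alpha$.

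Next I would run a continuity argument facet by facet. Let $g_1',\dots,g_n'$ be the columns of $Q + \Delta Q$. Linear independence of $\{g_i\}_{i\in S_\alpha}$ is an open condition, so for $\lN\Delta Q\rN_2$ small the $d-1$ vectors $\{g_i'\}_{i\in S_\alpha}$ still span a hyperplane; pick its unit normal $\hat{n}_\alpha'$ to depend continuously on $\Delta Q$ with $\hat{n}_\alpha'\to\hat{n}_\alpha$ as $\Delta Q\to 0$. For $j\notin S_\alpha$, $\langle g_j',\hat{n}_\alpha'\rangle \to \langle g_j,\hat{n}_\alpha\rangle \ne 0$, so there is $\epsilon_\alpha > 0$ such that $\lN\Delta Q\rN_2 < \epsilon_\alpha$ forces $\operatorname{sign}\langle g_j',\hat{n}_\alpha'\rangle = \operatorname{sign}\langle g_j,\hat{n}_\alpha\rangle$ for all $j\notin S_\alpha$, while $\langle g_i',\hat{n}_\alpha'\rangle = 0$ for $i\in S_\alpha$ by construction. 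Repeating the argmax computation of the first paragraph, now for $Z'$ and the functional $\langle\,\cdot\,,\hat{n}_\alpha'\rangle$, shows it is maximized over $Z'$ exactly on $A_{Z'}(F_\alpha)$; thus $A_{Z'}(F_\alpha)$ is a proper face of $Z'$, so $A_{Z'}(F_\alpha)\subseteq\del Z'$. Since $\varphi(X_\alpha) = A_{Z'}(\Lift(X_\alpha)) = A_{Z'}(F_\alpha)$, this gives $\varphi(X_\alpha)\subseteq\del Z'$.

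Finally, set $\epsilon := \min_\alpha \epsilon_\alpha$, which is positive since there are finitely many facets; for any $\Delta Q$ with $\lN\Delta Q\rN_2 < \epsilon$ and any $\Delta\mu$ we get $\varphi(\del Z) = \bigcup_\alpha \varphi(X_\alpha)\subseteq\del Z'$, i.e.\ $\varphi$ is proper. Translations are irrelevant throughout --- replacing $\mu$ by $\mu + \Delta\mu$ translates $\varphi(\del Z)$ and $\del Z'$ by the same vector --- which is why the hypothesis constrains only $\lN\Delta Q\rN_2$.

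I expect the first paragraph to be the crux, and in particular the place where stability is genuinely used: one must show that each facet $X_\alpha$ lifts to an \emph{honest} $(d-1)$-dimensional face $F_\alpha$ of $I^n$, which pins down exactly $d-1$ of the generators as orthogonal to $\hat{n}_\alpha$ and the rest as strictly non-orthogonal. Once this dimension count is in place the strict inequalities --- the only data a small perturbation can be made to respect --- come for free, and the remainder is a routine ``finitely many open conditions persist under small perturbation'' argument, the one point of mild care being the continuous selection of the perturbed normals $\hat{n}_\alpha'$.
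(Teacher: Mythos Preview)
Your proof is correct and takes a genuinely different route from the paper's. The paper argues vertex-first: it uses stability (via Corollary~\ref{cor:stable}) to show that every non-vertex cubical vertex lies in the interior of $Z$, then runs a fattening argument to show these remain interior under small perturbation, and separately invokes the separating-hyperplane characterization of vertices (Lemma~\ref{lem:lp}) to show that each vertex of $Z$ pushes forward to a vertex of $Z'$; from the resulting bijection $\varphi(\VERT(Z)) = \VERT(Z')$ it then deduces that facets map into $\del Z'$. You instead work facet-first: stability is used to force the lift $F_\alpha$ of each facet to be an honest $(d-1)$-face of $I^n$, which pins down exactly $d-1$ generators as orthogonal to $\hat n_\alpha$ and the rest as \emph{strictly} non-orthogonal; you then perturb those strict sign conditions directly and show $A_{Z'}(F_\alpha)$ is again the argmax face for the continuously varying functional $\langle\,\cdot\,,\hat n_\alpha'\rangle$. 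Your argument is more direct---the paper's passage from ``vertices map to vertices'' to ``facets map into the boundary'' is stated rather tersely---and makes the dependence on stability transparent. The paper's route, on the other hand, yields as a byproduct the explicit vertex bijection $\varphi\colon \VERT(Z)\to\VERT(Z')$, which is used downstream in Theorem~\ref{thm:local}; this also follows from your argument (vertices are intersections of facets, and your facet correspondence respects incidence), but less immediately.
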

\begin{proof}
We first assume that $\Delta \mu = 0$. Our strategy will be to first show that $\varphi(\VERT(Z)) = \VERT(Z')$, from which the desired result will follow.

Since $Z$ is stable, all cubical vertices which are not vertices are in the interior of $Z$, by Corollary \ref{cor:stable}. For each cubical vertex $v \in Z$, let $e_v \in C^n$ be the corresponding lift, and denote $E_{\Int} = \{ e \in C^n \mid e \neq e_v \ \  \forall v \in \VERT(Z)\}$. For any $\epsilon > 0$, let the $\epsilon$-fattening of $\del Z$ be:
\[ (\del Z)_{\epsilon} := \bigcup_{\lN \Delta Q \rN_2 < \epsilon} \del Z'\]
where $Z' = Z(Q+ \Delta Q, \mu)$. Now define:
\[ \tau(\epsilon) = \min_{e \in E_{\Int}} d(Qe+\mu, (\del Z)_\epsilon)\]
This represents the minimum distance of all interior cubical vertices of $Z$ to the boundaries of all possible $\epsilon$ perturbations of $Z$. Since all cubical vertices are on the interior of $Z$, we can pick $\epsilon$ small enough so that the fattening $(\del Z)_\epsilon$ does not contain any of them. In other words, there exists $\epsilon_0>0$ such that $\tau(\epsilon_0) > 0$.

Now consider the function $M: \R^{d\times n} \to \R$ given by:
\[ \Delta Q \mapsto \min_{e \in E_{\Int}} \lN \Delta Q e \rN_2 \]
This is continuous, so we can find $\epsilon_1 \leq \epsilon_0 $ such that $\lN \Delta Q \rN_2 < \epsilon_1 \Rightarrow M(\Delta Q) < \tau(\epsilon_0)$. Given such a $\Delta Q$, the images of the interior cubical vertices $e \in E_{\Int}$ are in the interior of the perturbed zonotope $Z' = Z(Q + \Delta Q, \mu)$. This means that any vertex of $Z'$ must be the pushforward of some vertex in $Z'$, i.e. $\VERT(Z') \subseteq \varphi(\VERT(Z))$.

Now we will show the reverse, by considering the vertices of $Z$. For each $v \in \VERT(Z)$, write $v = \sum_{j \in J_v} g_j$, where $g_j$ is the $j$th generator of Z (row of $Q$). Because the generators of $Z$ are in general position, there exists a hyperplane $H_v$ separating $\{g_j\}_{j\in J_v}$ from $\{g_j\}_{j\notin J_v}$. Let $\delta =\min_{v\in \VERT(Z)} \min_{j} d(g_j, H_v)$. Then there exists $\epsilon_2$ such that $\lN \Delta Q\rN_2 < \epsilon_2 \Rightarrow \min_j \lN \Delta g_j \lN_2 < \delta $, where $\Delta g_j$ is the $j$th row of $\Delta Q$. Thus if $\lN \Delta Q\rN_2 < \epsilon_2$, the hyperplanes $H_v$ still separate the appropriate generators of $Z' = Z(Q + \Delta Q, \mu)$ and therefore $\varphi(\VERT(Z)) \subseteq \VERT(Z')$.

We have thus shown that $\varphi(\VERT(Z)) = \VERT(Z')$ when $\lN \Delta Q \rN_2 < \min(\epsilon_1,\epsilon_2)$. Now consider any facet $F \subset \del Z$. Since the vertices of $Z$ contained in $F$ all map to the boundary of $Z'$, we must also have $\varphi(F) \subset \del Z'$. Applying this reasoning to all facets of $\del Z$ we find that $\varphi(\del Z) \subset \del Z'$.

Finally, to handle the case when $\Delta \mu \neq 0$, take $\Delta Q = 0$. Clearly $\varphi: \del Z \to Z'$ lands in the boundary because $\varphi$ is given by translation by $\Delta \mu$. Now for any perturbation $Z' = Z(Q + \Delta Q, \mu + \Delta \mu)$, we can write it as a composition of perturbations $Z \to Z' \to Z''$, the first of which with $\Delta \mu = 0$ and the second of which with $\Delta Q = 0$. If we assume $\lN \Delta Q \rN_2 < \min(\epsilon_1,\epsilon_2)$, the resulting pushforward $\varphi: \del Z \to Z''$ is well-defined and its image lies in the boundary of $Z''$.

\end{proof}

Having a proper pushforward $\varphi: \del Z \to Z'$ in a neighborhood of a zonotope $Z$ is useful because the boundary of all zonotopes in that neighborhood are combinatorially the same: they are all the image of the same subset of the unit cube in $\R^n$. This will allow us to write down a well-defined, explicit function that is equal to $d_P$ in some neighborhood of $Z$.

The following theorem will serve as a foundation for our method of computing subgradients of $d_P$ as well as allow us to define our optimization algorithm for $d_P$. We will frequently require the following assumptions on a zonotope $Z \in \ZZ_n(\R^d)$ and a polytope $P \subset \R^d$:

\begin{enumerate}
\item $Z$ is a general position zonotope (and therefore stable).
\item Each vertex of $P$ is Hausdorff stable relative to $Z$ and each vertex of $Z$ is Hausdorff stable relative to $P$.
\end{enumerate}
We will refer to these as the \emph{locality conditions} on $P$ and $Z$. For fixed $P$, these are both open and dense conditions on $Z$.


\begin{thm}\label{thm:local}
Fix $P \subset \R^d$ a polytope and suppose $Z_0 \in \ZZ_n(\R^d)$ satisfies the locality conditions 1) and 2) above. Then there exists an open neighborhood $U$ of $Z_0$ such that for all $Z \in U$:
\begin{equation}\label{eq:localdp} d_P(Z) = \max \left( \max_{p \in \VERT{P}} d(p, \Aff(\varphi_Z(F_p)) ), \max_{q \in \VERT{Z_0}} d(\varphi_Z(q), \Aff(F_q)) \right) \end{equation}
where:
\begin{itemize}
\item $\varphi_Z: \del Z_0 \to \del Z$ is the pushforward.
\item $d(x, W)$ is the distance between $x$ and an affine subset $W$.
\item $F_p$ is the smallest face of $Z$ containing $\Pi(p,Z)$, and $F_q$ is the smallest face of $P$ containing $\Pi(q,P)$.
\item $\Aff(X)$ is the affine hull of a set $X$.
\end{itemize}
Moreover, each term in the maximum above is a square root of bounded rational functions in the parameters $(Q,\mu)$ defining $Z$. In particular, they are differentiable functions of $Z$ inside $U$.
\end{thm}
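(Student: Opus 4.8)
The plan is to combine the structural results already established—stability of $Z_0$, properness of the pushforward near $Z_0$ (Lemma \ref{lem:stable}), and Hausdorff stability of the relevant vertices—into a single shrinking argument that pins down the combinatorics of all the distance computations in a neighborhood $U$ of $Z_0$. First I would invoke Lemma \ref{lem:stable} to get an $\epsilon_0$ such that for $\|\Delta Q\|_2 < \epsilon_0$ (and any $\Delta\mu$) the pushforward $\varphi_Z : \del Z_0 \to \del Z$ is proper, and in fact a bijection $\VERT(Z_0) \to \VERT(Z)$ that preserves the face lattice (this is essentially contained in the proof of Lemma \ref{lem:stable}, since the separating hyperplanes $H_v$ are stable under small perturbation). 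This lets me reindex $\VERT(Z)$ by $\VERT(Z_0)$ and, more importantly, identify the face $F_q$ of $Z$ consistently across the neighborhood.

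Next I would use Hausdorff stability to fix the projection combinatorics. For each vertex $p \in \VERT(P)$, since $p$ is Hausdorff stable relative to $Z_0$, Remark \ref{rem:Hausdorff-stable} gives an open neighborhood on which the smallest face of $Z_0$ containing the projection of $p$ is constant; I need the dual statement that small perturbations of $Z_0$ itself (not of $p$) don't change which face of $Z$ realizes the projection. This follows because the projection $\Pi(p, Z)$ depends continuously on the defining data $(Q,\mu)$, the normal cones of $Z$ vary continuously, and $p$ lies in the relative interior of the normal cone at $\Pi(p,Z_0)$—so for $Z$ close enough to $Z_0$, $p - \Pi(p,Z)$ still lies in the relative interior of the corresponding (pushed-forward) normal cone, hence the minimal face $F_p$ is the pushforward of the minimal face at $Z_0$. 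Then $d(p, Z) = d(p, \Aff(F_p))$ by the characterization via normal cones (the lemma citing \cite{Bur14}), and similarly each vertex $q$ of $Z_0$ is Hausdorff stable relative to $P$, so $d(\varphi_Z(q), P) = d(\varphi_Z(q), \Aff(F_q))$ with $F_q$ a fixed face of $P$. Intersecting the finitely many neighborhoods obtained this way (one per vertex of $P$, one per vertex of $Z_0$) with the ball $\|\Delta Q\|_2 < \epsilon_0$ produces the desired $U$; formula \eqref{eq:localdp} then follows from the vertex-wise expression \eqref{eq:alt-Hausdorff} for the Hausdorff distance together with $\VERT(Z) = \varphi_Z(\VERT(Z_0))$.

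For the final differentiability claim, I would make the distance formulas explicit. The pushforward $\varphi_Z(q)$ of a vertex $q = \sum_{j \in J} (\text{col}_j Q_0)$ is simply $\sum_{j\in J}(\text{col}_j Q) + \mu$, an affine—hence polynomial—function of $(Q,\mu)$. The affine hull $\Aff(F_p)$ is spanned by a fixed subset of pushed-forward vertices, so it is $\{x : N(Q,\mu)\, x = b(Q,\mu)\}$ for matrices whose entries are polynomial in $(Q,\mu)$; the squared distance from a point $x$ to such an affine subspace is $\|N_\perp(x - x_0)\|^2$ where $x_0$ is any point of the subspace and $N_\perp$ is an orthogonal projector, and by the standard least-squares formula this equals a ratio of polynomials (determinants of Gram matrices) in the entries of $(Q,\mu)$ and $x$. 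On the neighborhood $U$ the relevant spanning sets stay affinely independent (by general position of $Z_0$ and continuity), so the denominators—Gram determinants—are bounded away from zero, making each term a square root of a bounded rational function, and in particular smooth on $U$ since the radicand is positive there (it is a genuine positive distance by the locality conditions, or at worst we shrink $U$ to avoid the zero locus, which for a vertex strictly outside the affine hull cannot occur).

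The main obstacle, I expect, is the second step: rigorously upgrading Hausdorff stability of a fixed point relative to $Z_0$ into stability of the projection combinatorics under perturbation of $Z_0$, uniformly over all the finitely many vertices at once. This requires care with the continuity of normal cones of a polytope as a function of its generator data, and with the fact that as $Z$ varies its combinatorial type is constant (guaranteed by properness of $\varphi_Z$), so that ``the normal cone at the pushforward of $q$'' is a well-defined continuously-varying object. Once the face data is locked down, everything else is bookkeeping with the least-squares distance formula.
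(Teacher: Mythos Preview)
Your proposal is correct and follows essentially the same route as the paper: invoke Lemma~\ref{lem:stable} for properness of $\varphi_Z$ and the vertex bijection, use Hausdorff stability of the vertices (Remark~\ref{rem:Hausdorff-stable}) to freeze the face data $F_p,F_q$ on a smaller neighborhood, then read off \eqref{eq:localdp} from \eqref{eq:alt-Hausdorff}; the paper defers the rational-function claim to Corollaries~\ref{cor:grad1} and~\ref{cor:grad2}, which carry out exactly the explicit least-squares computation you sketch. Your identification of the delicate step---passing from Hausdorff stability of $p$ relative to $Z_0$ to stability of the projection face under perturbation of $Z$---is apt; the paper handles it with the phrase ``for similar reasons,'' so your more careful discussion via continuity of normal cones and constancy of the combinatorial type is if anything an improvement in rigor.
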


\begin{proof}

First, we observe that the RHS of (\ref{eq:localdp}) evaluated at $Z_0$ is the RHS of (\ref{eq:alt-Hausdorff}) because $\varphi_{Z_0}$ is the identity. Therefore (\ref{eq:localdp}) holds at $Z_0$. Now we let $V \ni Z_0$ be an open neighborhood such that $\varphi_Z$ is proper for all $Z \in V$ (Lemma \ref{lem:stable}). Recall Remark \ref{rem:Hausdorff-stable}, which says that Hausdorff stability of all vertices implies that the vertices of $Z_0$ can be perturbed slightly and the faces $F_q$ for $q \in \VERT(Z_0)$ remain unchanged. For similar reasons, given $p \in \VERT(P)$ the smallest face containing $\Pi(p, Z_0)$ after a small perturbation of $Z_0$ is still $F_p$. In other words, there exists $U \subset V$ open containing $Z_0$ such that for all $Z \in U$:
\begin{itemize}
\item The smallest face of $P$ containing $\Pi(\varphi_Z(q),P)$ is $F_q$, and 
\item The smallest face of $Z$ containing $\Pi(p, Z)$ is $\varphi_Z(F_p)$.
\end{itemize}
Now applying equation (\ref{eq:alt-Hausdorff}), we see that the RHS of (\ref{eq:localdp}) holds for $Z$.

Finally, we relegate the proof that the functions $d(p,\Aff(\varphi_Z(F_p)))$ and $d(\varphi_Z(q), \Aff(F_q))$ are square roots of bounded rational functions to Section \ref{subsec:computation}, specifically in Corollaries \ref{cor:grad1} and \ref{cor:grad2}. 
\end{proof}

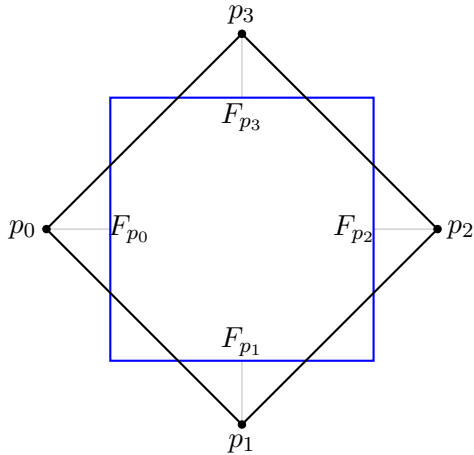
\begin{figure}
\begin{center}
\begin{tikzpicture}[scale=3.5]

\draw[opacity=0.5,gray] (-0.24246212024587482, 0.5004419417382416) -- (-3.736178618729002e-21, 0.5004419417382415);
\node at (0.07273863607376245, 0.5004419417382415) {$F_{ p_{ 0 } }$};
\draw[fill=black] (-0.24246212024587482, 0.5004419417382416) circle (0.4pt);
\node[anchor=east] at (-0.24246212024587482, 0.5004419417382416) {$p_0$};
\draw[opacity=0.5,gray] (0.49999999999999994, -0.24246212024587482) -- (0.4999999999999999, -3.9908782242322705e-21);
\node at (0.4999999999999999, 0.07273863607376245) {$F_{ p_{ 1 } }$};
\draw[fill=black] (0.49999999999999994, -0.24246212024587482) circle (0.4pt);
\node[anchor=north] at (0.49999999999999994, -0.24246212024587482) {$p_1$};
\draw[opacity=0.5,gray] (1.2429040619841165, 0.5004364174665135) -- (1.0, 0.5004364174665135);
\node at (0.9271287814047651, 0.5004364174665135) {$F_{ p_{ 2 } }$};
\draw[fill=black] (1.2429040619841165, 0.5004364174665135) circle (0.4pt);
\node[anchor=west] at (1.2429040619841165, 0.5004364174665135) {$p_2$};
\draw[opacity=0.5,gray] (0.499994475728272, 1.242893082494057) -- (0.499994475728272, 1.0);
\node at (0.499994475728272, 0.9271320752517829) {$F_{ p_{ 3 } }$};
\draw[fill=black] (0.499994475728272, 1.242893082494057) circle (0.4pt);
\node[anchor=south] at (0.499994475728272, 1.242893082494057) {$p_3$};
\draw[thick,color=blue] (0.0, 0.0) -- (1.0, 0.0) -- (1.0, 1.0) -- (0.0, 1.0) -- cycle;
\draw[thick] (-0.24246212024587482, 0.5004419417382416) -- (0.49999999999999994, -0.24246212024587482) -- (1.2429040619841165, 0.5004364174665135) -- (0.499994475728272, 1.242893082494057) -- cycle;

\end{tikzpicture}
\end{center}
\caption{Example of A zonotope (blue) and a polytope (black) where the Hausdorff distance $d_P(Z)$ is achieved at four pairs of points. The corresponding faces $F_{p}$ in $Z$ are labeled. } 
\label{fig:localdp}
\end{figure}

A schematic of this local theory is shown in Figure \ref{fig:localdp}. The functions $d(x,W)$ are differentiable and locally Lipschitz, and since the pointwise maximum of locally Lipschitz functions is also locally Lipschitz, we have:

\begin{cor}
$d_P: \ZZ_n(\R^d) \to \R$ is locally Lipschitz at all points $Z$ that satisfy the locality conditions 1) and 2), and is therefore also differentiable almost everywhere in $\ZZ_n(\R^d)$.
\end{cor}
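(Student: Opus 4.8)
The plan is to derive this directly from Theorem~\ref{thm:local}, using two standard facts: a finite pointwise maximum of locally Lipschitz functions is locally Lipschitz, and Rademacher's theorem, which says that a locally Lipschitz function on an open subset of Euclidean space is differentiable almost everywhere. Accordingly, fix $Z_0 \in \ZZ_n(\R^d)$ satisfying the locality conditions~1) and~2). Theorem~\ref{thm:local} supplies an open neighborhood $U \ni Z_0$ and a \emph{fixed} finite index set --- the vertices of $P$ together with the vertices of $Z_0$ --- such that on $U$ the function $d_P$ is the maximum over this index set of terms $f_i(Z)$, each of the form $d\big(p,\Aff(\varphi_Z(F_p))\big)$ or $d\big(\varphi_Z(q),\Aff(F_q)\big)$. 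It therefore suffices to show that every $f_i$ is locally Lipschitz on $U$; the finite maximum is then locally Lipschitz on $U$, and in particular $d_P$ is locally Lipschitz at $Z_0$.

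To see that a single term $f_i$ is locally Lipschitz, write it as $d(x(Z),W(Z))$, where $x(Z)$ is a point and $W(Z)=w_0(Z)+V(Z)$ is an affine subspace with linear part $V(Z)$. Then $x(Z)-\Pi(x(Z),W(Z))=\pi_{V(Z)^\perp}\big(x(Z)-w_0(Z)\big)$, where $\pi_{V(Z)^\perp}$ is the linear orthogonal projection onto $V(Z)^\perp$, so $f_i(Z)=\lN \pi_{V(Z)^\perp}\big(x(Z)-w_0(Z)\big)\rN_2$. On $U$ the faces $\varphi_Z(F_p)$ and $F_q$ have constant combinatorial type (this is part of the conclusion of Theorem~\ref{thm:local}), so $w_0(Z)$ and a spanning set of $V(Z)$ are images under $A_Z$ of fixed cubical vertices and hence are affine in the parameters $(Q,\mu)$ of $Z$; likewise $x(Z)$ is affine in $(Q,\mu)$, being either a fixed vertex of $P$ or $\varphi_Z(q)=A_Z(\Lift(q))$. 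The ``bounded rational function'' clause of Theorem~\ref{thm:local} is precisely the statement that the relevant Gram (volume) determinant does not vanish on $U$, i.e. that this spanning set remains a basis of $V(Z)$ throughout $U$; consequently $\pi_{V(Z)^\perp}$ is a smooth --- rational with nowhere-vanishing denominator --- matrix-valued function of $(Q,\mu)$ on $U$. Thus $f_i$ is a composition of a smooth map with the Euclidean norm, hence locally Lipschitz on $U$. (This is why I argue via the norm-composition form rather than by differentiating a square root: $f_i$ is the square root of a smooth nonnegative function, which need only be locally Lipschitz, not $C^1$, at the points where it vanishes.)

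For the almost-everywhere statement, work in the coordinates $(Q,\mu)$ on $\widetilde{\ZZ_n}(\R^d)=\R^{n\times d}\times\R^d$ --- the quotient by the finite group $S_n$ is irrelevant to measure-zero claims --- and let $\mathcal{S}$ be the set of $(Q,\mu)$ satisfying both locality conditions. By hypothesis $\mathcal{S}$ is open and dense, and moreover its complement is Lebesgue-null: condition~1) fails only on the zero locus of finitely many minor-determinant polynomials in $Q$, a proper algebraic subset, while condition~2) fails only where some vertex of $P$ lies on the normal-complex skeleton of $Z$, or some vertex of $Z$ lies on the normal-complex skeleton of $P$; since varying $\mu$ alone already moves these incidences, that failure locus is semialgebraic of positive codimension. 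Hence $\widetilde{\ZZ_n}(\R^d)\setminus\mathcal{S}$ is null. On the open set $\mathcal{S}$ the function $d_P$ is locally Lipschitz by the previous paragraph, so Rademacher's theorem gives differentiability at almost every point of $\mathcal{S}$; since $\mathcal{S}^{c}$ is null, $d_P$ is differentiable almost everywhere on $\ZZ_n(\R^d)$.

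The step I expect to need the most care is the last one, namely upgrading ``open and dense'' to ``full measure'' for the locality conditions: open density alone does not force a null complement, so one must genuinely use the (semi)algebraic structure of the failure locus --- or, more hands-on, exhibit it as a locally finite union of proper smooth hypersurfaces --- before Rademacher's theorem may be applied on a full-measure open set. Everything else is routine once Theorem~\ref{thm:local} is in hand.
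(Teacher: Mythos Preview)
Your proof is correct and follows the same strategy as the paper: invoke Theorem~\ref{thm:local} to express $d_P$ locally as a finite maximum of functions that are each a Euclidean norm composed with a smooth map, conclude local Lipschitzness of the maximum, and then apply Rademacher. You in fact supply more than the paper does --- your observation that ``open and dense'' alone does not force full measure, together with your (semi)algebraic argument that the failure locus of the locality conditions is Lebesgue-null, addresses a point the paper's one-line justification leaves implicit.
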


\section{The Feasibility Cone}
\label{sec:opt}

In this section we will define the feasibility cone $\mathcal{C}(P,Z)$ associated to a polytope $P$ and a zonotope $Z$. We will see that the interior of this cone corresponds to perturbations of $Z$ that decrease the Hausdorff distance $d_P(Z)$. 

\begin{prop}\label{prop:optim-cone}
Fix $P$ a polytope and $Z = Z(Q,\mu) \in \ZZ_n(\R^d)$ that satisfy the locality conditions 1) and 2). Let $d_P(Z)$ be achieved at $k$ distinct pairs of points $(p_1,q_1),...,(p_k,q_k)$. For each $q_i$ let $e_i = \Lift(q_i)$ be the (unique) lift of $q_i$ to $I^n$. Define the set:
\[ \mathcal{C}(P,Z) = \{ (\Delta Q, \Delta \mu) \in \R^{n\times d} \times \R^{d} \mid \langle \Delta Q e_i + \Delta \mu , p_i - q_i \rangle \geq 0 \ \forall i \} \]
Then $\mathcal{C}(P,Z)$ is a polyhedral cone, and for any $(\Delta Q, \Delta \mu)$ in the interior of $\mathcal{C}(P,Z)$, there exists $\epsilon>0$ such that for all $t< \epsilon$ the associated perturbed zonotope $Z_t = Z(Q + t\Delta Q, \mu + t\Delta \mu)$ satisfies $d_P(Z_t) < d_P(Z)$.
\end{prop}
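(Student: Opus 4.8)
The plan is to work in the local model for $d_P$ furnished by Theorem~\ref{thm:local} and to show that, for a direction $(\Delta Q,\Delta\mu)$ in $\mathrm{int}\,\mathcal C(P,Z)$, every function that is active in that local maximum at $Z$ has a strictly negative derivative along $t\mapsto Z_t$. First, the cone itself: for each $i$ the map $(\Delta Q,\Delta\mu)\mapsto\langle \Delta Q e_i+\Delta\mu,\,p_i-q_i\rangle$ is a linear functional on $\R^{n\times d}\times\R^d$, so $\mathcal C(P,Z)$ is an intersection of finitely many closed half-spaces through the origin, i.e. a polyhedral cone. We may assume $d_P(Z)>0$ (otherwise $Z=P$ and there is nothing to prove); then each achieving pair has $p_i\ne q_i$, each of the above functionals is nonzero (evaluate it at $\Delta Q=0$, $\Delta\mu=p_i-q_i$), and therefore $\mathrm{int}\,\mathcal C(P,Z)\subseteq\{\,\langle \Delta Q e_i+\Delta\mu,\,p_i-q_i\rangle>0\ \forall i\,\}$. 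So it suffices to treat directions satisfying all $k$ inequalities strictly.

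By Theorem~\ref{thm:local} there is a neighborhood $U\ni Z$ on which $d_P=\max_j h_j$, the $h_j$ being the differentiable functions $Z'\mapsto d(p,\Aff(\varphi_{Z'}(F_p)))$ for $p\in\VERT(P)$ and $Z'\mapsto d(\varphi_{Z'}(q),\Aff(F_q))$ for $q\in\VERT(Z)$. Choose $\epsilon_0>0$ with $Z_t\in U$ for $0\le t<\epsilon_0$, and call $h_j$ \emph{active} if $h_j(Z)=d_P(Z)$. Inactive terms remain below $d_P(Z)$ near $Z$ by continuity, so it is enough to show that each active $h_j$ is strictly decreasing at $t=0$ along $Z_t$; taking the minimum of the finitely many resulting positive thresholds together with $\epsilon_0$ then yields $\max_j h_j(Z_t)<d_P(Z)$ for all small $t>0$, which is the claim.

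It remains to identify the active terms and differentiate them. Since $\varphi_Z=\mathrm{id}$ and $\Pi(p,Z)$ lies in the relative interior of $F_p$ — whence $\Pi(p,\Aff(F_p))=\Pi(p,Z)$ and $p-\Pi(p,Z)$ is orthogonal to $\Aff(F_p)$, by the normal-cone characterization of nearest points — at $Z$ we have $d(p,\Aff(F_p))=d(p,Z)$ and $d(q,\Aff(F_q))=d(q,P)$; hence every active $h_j$ is attached to one of the listed pairs $(p_i,q_i)$, with lift $e_i=\Lift(q_i)$. I would then compute the derivative of each active term's square along $Z_t$ at $t=0$. For a term of the first type, $h_j(Z')=d(\varphi_{Z'}(q_i),\Aff(F_{q_i}))$, the affine space $W=\Aff(F_{q_i})$ is fixed while $\varphi_{Z_t}(q_i)=(Q+t\Delta Q)e_i+\mu+t\Delta\mu=q_i+t v_i$ moves, where $v_i:=\Delta Q e_i+\Delta\mu$; since $q_i-p_i\perp W$ one gets $h_j(Z_t)^2=\|q_i-p_i+t\,\pi^\perp v_i\|^2$, with $\pi^\perp$ the orthogonal projection off the direction space of $W$, so $\frac{d}{dt}h_j(Z_t)^2|_{0}=2\langle q_i-p_i,v_i\rangle=-2\langle \Delta Q e_i+\Delta\mu,\,p_i-q_i\rangle$. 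For a term of the second type, $h_j(Z')=d(p_i,\Aff(\varphi_{Z'}(F_{p_i})))$, the point $p_i$ is fixed and the affine space moves: choosing vertices $w_0,\dots,w_r$ of $F_{p_i}$ whose lifts form an affine frame and writing $h_j(Z_t)^2=\min_{s\in\R^r}\|p_i-c_t-\sum_j s_j u_j(t)\|^2$ with $c_t=\varphi_{Z_t}(q_i)=q_i+t v_i$ and $u_j(t)=(Q+t\Delta Q)(\Lift(w_j)-\Lift(w_0))$, the minimizer at $t=0$ is $s=0$ (because $\Pi(p_i,\Aff(F_{p_i}))=q_i$, again by the normal-cone characterization), so the envelope theorem gives $\frac{d}{dt}h_j(Z_t)^2|_0=\partial_t|_0\|p_i-c_t\|^2=-2\langle p_i-q_i,v_i\rangle=-2\langle \Delta Q e_i+\Delta\mu,\,p_i-q_i\rangle$ — the same expression. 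In either case a direction in $\mathrm{int}\,\mathcal C(P,Z)$ makes this derivative strictly negative, hence $h_j(Z_t)<d_P(Z)$ for small $t>0$.

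The hard part is the second computation: understanding, to first order, the distance from the fixed point $p_i$ to the \emph{moving} affine hull $\Aff(\varphi_{Z_t}(F_{p_i}))$, whose direction space both translates and rotates and whose underlying face need not be a simplex; the point of the envelope-theorem step (equivalently, of differentiating the orthogonal projection and using $p_i-q_i\perp\mathrm{span}\{u_j(0)\}$) is that only the translational velocity $v_i$ contributes at first order. A secondary point requiring care is the bookkeeping that every term active in the local maximum really is attached to one of the listed achieving pairs, which is exactly where the locality conditions and the normal-cone lemma enter.
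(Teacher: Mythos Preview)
Your argument is correct and takes a genuinely different route from the paper. The paper does not differentiate the active terms; instead it observes that $\varphi_{Z_t}(q_i)=q_i+tv_i$ with $v_i=\Delta Q e_i+\Delta\mu$, and uses the elementary isoceles-triangle picture to find an explicit $\tau_i=2\cos(\alpha_i)\lN p_i-q_i\rN_2/\lN v_i\rN_2$ such that $\lN p_i-\varphi_{Z_t}(q_i)\rN_2<\lN p_i-q_i\rN_2$ for all $t<\tau_i$. It then bounds each active term trivially by that distance, using $\varphi_{Z_t}(q_i)\in\Aff(\varphi_{Z_t}(F_{p_i}))$ in the moving-hull case and $p_i\in\Aff(F_{q_i})$ in the moving-point case, so no envelope-theorem computation is needed. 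Your approach computes the actual first-order behavior of the active $h_j$ along $Z_t$, which is cleaner conceptually and makes the link to the Clarke subdifferential transparent (indeed, your common derivative $-2\langle v_i,p_i-q_i\rangle$ exhibits the facet normals of $\mathcal C(P,Z)$ as directional derivatives of $d_P^2$). The paper's approach, on the other hand, avoids the delicate point you flag about the rotating affine hull and, crucially for the rest of the paper, produces the explicit thresholds $\tau_i$ that are later used to define step-size rules (conservative, random, aggressive) in Algorithm~\ref{alg:subgradient}; your first-order argument certifies decrease for \emph{some} small $t$ but does not by itself give a usable step size.
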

\begin{rem}
We call $\mathcal{C}(P,Z)$ the \emph{feasibility cone} of $Z$ relative to $P$.
\end{rem}
\begin{proof}
First we show that $\CC(P,Z)$ is a polyhedral cone. This can be seen by rewriting the inequalities as:
\[ \langle \Delta Q e_i + \Delta \mu, p_i - q_i \rangle = \langle (\Delta Q, \Delta \mu), (p_i - q_i) \otimes (e_i, 1) \rangle \]
which are linear in $(\Delta Q, \Delta \mu)$.

Let $(\Delta Q, \Delta \mu)$ be an interior point of $\CC(P,Z)$. Then $\langle \Delta Q e_i + \Delta \mu, p_i-q_i\rangle > 0$ for all $i$. For $t > 0$ denote $Z_t = Z(Q + t\Delta Q, \mu + t\Delta \mu)$, and for $T>0$, let $B_T(Z) = \{ Z_t  \ : \ t < T\}$. By Theorem \ref{thm:local}, there is $\epsilon_0 > 0$ be such that $d_P|_{B_{\epsilon_0}(Z)}$ is a maximum of distances between points and affine subspaces (equation \ref{eq:localdp}). We wish to show that there exists $0 < \epsilon \leq \epsilon_0$ so that for all $t < \epsilon$:
\begin{enumerate}
\item all terms achieving the maximum in \ref{eq:localdp} at $Z$ are strictly less than $d_P(Z)$ at $Z_t$, and
\item all terms not achieving the maximum in \ref{eq:localdp} at $Z$ remain less than $d_P(Z)$ at $Z_t$.
\end{enumerate}
It then directly follows that $d_P(Z_t) < d_P(Z)$ for all $t< \epsilon_1$. We will show 1) first. For every $i$, let $\alpha_i$ be such that
\[ \cos(\alpha_i) = \frac{\langle \varphi_{Z_t}(q_i) - q_i , p_i-q_i \rangle}{\lN  \varphi_{Z_t}(q_i) - q_i \rN_2 \lN p_i -q_i \rN_2}\] 
Note that since $\varphi_{Z_t}(q_i) - q_i = t \Delta Q e_i + t \Delta \mu$, the quantity $\alpha_i$ is independent of $t$. Now set:
\begin{equation}\label{eq:pf:tau}
\tau_i = \frac{2\cos(\alpha_i) \lN p_i - q_i \rN_2}{\lN \Delta Q e_i + \Delta \mu \rN_2} 
\end{equation}
Since $\cos(\alpha_i) > 0$ by assumption, we have $\tau_i > 0$ for all $i$. Set $\tau = \min(\epsilon_0,\min_i \tau_i)$. Then for all $t < \tau$ we have:
\begin{align*}
2 \cos(\alpha_i) \lN p_i - q_i \rN_2 & > t \lN \Delta Q e_i + \Delta \mu \rN_2 \\
& = \lN \varphi_{Z_t}(q_i) - q_i \rN_2 \quad \forall i
\end{align*}
From Figure \ref{fig:pf:tau}, we see this implies that 
\begin{equation}\label{eq:pf:optim}
\lN \varphi_{Z_t}(q_i) - p_i \rN_2 < \lN q_i - p_i \rN_2 = d_P(Z) \quad \forall i, \forall t < \tau
\end{equation}

\begin{figure}
\begin{center}
\begin{tikzpicture}
\draw[dashed, thick] (0,0) -- (0,3);
\draw[dashed, thick] (-2.4,1.18) -- (0,3);
\draw[dashed, red, thick] (0,0) -- (-2.4,1.18);
\draw[dashed, thick]  (-2.4,1.18) -- (-3,1.5);
\draw[fill=black] (0,3) circle (2pt) node[anchor=south] {$p_i$};
\draw[fill=black] (0,0) circle (2pt) node[anchor=north] {$q_i$};
\draw[fill=black] (-1,0.5) circle (2pt) node[anchor=north east] {$\varphi_{Z_t}(q_i)$};
\node[anchor=south east] at (0.1,0) {$\alpha_i$};
\node[rotate=0] at (0,1.5) {$=$};
\node[rotate=-45] at (-1.2,2.1) {$=$};
\end{tikzpicture}
\end{center}
\caption{Schematic illustration of $\tau_i$. For any $t<\tau_i$,  $\varphi_{Z_t}(q_i)$ lies in the red region, in which $\lN p_i - \varphi_{Z_t}(q_i) \rN_2 < \lN p_i - q_i \rN_2$.}
\label{fig:pf:tau}
\end{figure}
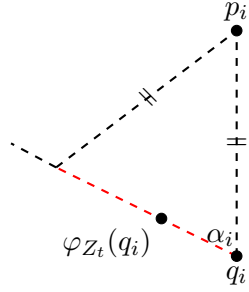

For $p \in \VERT(P)$ and $q \in \VERT(Z)$, let $F_p$ and $F_q$ be defined as in Theorem \ref{thm:local}. If $p_i$ is a vertex of $P$, then since $\varphi_{Z_t}(q_i) \in \varphi_{Z_t}(F_{p_i}) \subset \Aff(\varphi_{Z_t}(F_{p_i}))$, we have the inequality:
\[ d(p_i, \Aff(\varphi_{Z_t}(F_{p_i}))) \leq \lN p_i - \varphi_{Z_t}(q_i) \rN_2 \]
Similarly, if $q_j$ is a vertex of $Z$, then since $p_j \in F_{q_j}$ we have:
\[ d(\varphi_{Z_t}(q_j), \Aff(F_{q_j})) \leq \lN p_j - \varphi_{Z_t}(q_j) \rN_2 \]
Combining these with Equation \ref{eq:pf:optim}, we see that all terms achieving the maximum in Equation \ref{eq:localdp} are strictly smaller at $Z_t$ than at $Z$ for all $t< \tau$.

To prove claim 2), we leverage continuity. Let $V_P = \{ v \in \VERT(P) : v \neq p_i \ \forall i \}$ and $V_Z = \{ v \in \VERT(Z) : v \neq q_i \ \forall i\}$ be the sets of vertices not achieving the Hausdorff distance at $Z$. Choose $\gamma > 0$ such that $d(p,\Aff(F_p)) < d_P(Z) - \gamma$ for all $p$ in $V_P$ and $d(q,\Aff(F_q)) < d_P(Z) - \gamma$ for all $q \in V_Z$. Now define $f: [0,\tau) \to \R$ by:
\[ f(t) = \max\left(\max_{p\in V_P}d(p, \Aff(\varphi_{Z_t}(F_p))), \max_{q \in V_Z} d(\varphi_{Z_t}(q),\Aff(F_q)) \right) \]
This is a continuous function with $f(0) < d_Z(P) - \gamma$. By continuity, there exists $\tau' > 0$ such that $f(t) < d_Z(P)$ for all $t \in [0,\tau')$. Setting $\epsilon := \min(\tau,\tau')$, we see that both 1) and 2) hold. 
\end{proof}

\begin{rem}\label{rem:perturb}
The maximal quantity $\epsilon$ that we have constructed in the above proof we will refer to as the \emph{perturbation limit} of $Z$ relative to $P$. Note that any perturbation $Z_t$ with $t < \epsilon$ is, by construction, contained in the neighborhood $B_{\epsilon_0}(Z)$. This means that iteratively applying such perturbations $Z_0 \to Z_{t_1} \to Z_{t_2} \to ...$ will never change the functional form of $d_P$ at $Z_{t_i}$. In other words, $d_P(Z_{t_i})$ will always be a maximum of the same set of functions.
\end{rem}

This result shows that we have a checkable, sufficient condition for generic $Z$ not being a local minimum of $d_P$. In the next section, we will show how to explicitly check this condition and how this can be integrated into an optimization scheme for $d_P$.

\begin{exmp}
Here we calculate the feasibility cone for a simple case. Let $Z = Z(I,0)$, where $I \in \R^{2\times 2}$ is the identity matrix, and let $P$ be the polytope obtained by rotating $Z$ by $\frac{\pi}{4}$ and scaling about its barycenter by $1+\epsilon$ for a small quantity $\epsilon$ (see Figure \ref{fig:localdp}). In this case, there are four points $(p_0,q_0),...,(p_3,q_3)$ achieving $d_P(Z)$. The feasible cone $\mathcal{C}(P,Z)$ is the set $\{ z= (\Delta Q, \Delta \mu) \in \R^{2\times 2} \times \R^2 \mid Az \geq 0 \}$, where $A$ is:
\begin{align*}
A &= \begin{pmatrix} (p_0-q_0) \otimes (q_0,1) \\  (p_1-q_1) \otimes (q_1, 1) \\ (p_2-q_2) \otimes (q_2, 1) \\ (p_3-q_3)\otimes (q_3,1) \end{pmatrix}  \\
& = \begin{pmatrix} 0 & -1 & -2 & 0 & 0 & 0 \\ 0 & 0 & 0 & -1 & 0 & -2 \\ 2 & 1 & 2 & 0 & 0 & 0 \\ 0 & 0 & 0 & 1 & 2 & 2 \end{pmatrix}
\end{align*}
Here we are flattening $\Delta Q$ into a vector of length 4 for simplicity. The extremal rays of $\mathcal{C}(P,Z)$ can be computed\footnote{using Polymake} to be:
\[ 
\begin{matrix} v_1 = \begin{pmatrix} 1 & 0 & 0 & 0 & 0 & 0  \end{pmatrix} \\
v_2 = \begin{pmatrix} 0 & 0 & 0 & 0 & 1 & 0  \end{pmatrix} \\
v_3 = \begin{pmatrix} 0 & 0 & 0 & 0 & 1 & -1  \end{pmatrix} \\
v_4 = \begin{pmatrix} 1 & -2 & 0 & 0 & 1 & -1  \end{pmatrix} \\
\end{matrix}
\]
with lineality space generated by:
\[ 
\begin{matrix} 
\ell_1 = \begin{pmatrix} 0 & -2 & 1 & 0 & 0 & 0  \end{pmatrix} \\
\ell_2 = \begin{pmatrix} 0 & 0 & 0 & -2 & 0 & 1  \end{pmatrix} \\
\end{matrix}
\]
Any combination $\lambda_1 v_1 + \lambda_2 v_2 + \lambda_3 v_3 + \lambda_4 v_4 + \mu_1 \ell_1 + \mu_2 \ell_2$ with $\lambda_i > 0$ and $\mu_i \in \R$ lies in the interior of this cone. Therefore $Z$ is not a local minimum of $d_P$.

\end{exmp}

It is important to note that the converse of Proposition \ref{prop:optim-cone} doesn't necessarily hold. That is, if $\mathcal{C}(P,Z)$ has empty interior, it isn't necessarily true that any perturbation of $Z$ will not decrease the Hausdorff distance. The trouble happens when one of the $q_i$ is not a vertex of $Z$. If $\mathcal{C}(P,Z)$ has empty interior, that means for every $(\Delta Q, \delta \mu)$, the corresponding $\alpha_i$ satisfies $\cos(\alpha_i) \leq 0$. In this case just because $\lN p_i - \varphi_{Z_t}(q_i) \rN_2 > \lN p_i - q_i \rN_2$ doesn't mean that $d(p_i, \Aff(\varphi_{Z_t}(F_{p_i})))$ increases (see diagram below, and compare to Figure \ref{fig:pf:tau}).

\begin{center}
\begin{tikzpicture}
\draw[thick] (-1,0) -- (1,0);
\draw[dashed, thick] (0,0) -- (0,1);
\draw[red, thick] (1,0.5) -- (-2,-1);
\draw[fill=black] (0,1) circle (2pt) node[anchor=south] {$p_i$};
\draw[fill=black] (0,0) circle (2pt) node[anchor=north] {$q_i$};
\draw[fill=black] (-1,-0.5) circle (2pt) node[anchor=north] {$\varphi_{Z_t}(q_i)$};
\end{tikzpicture}
\end{center}
Here $F_{p_i}$ is the solid black line and $\Aff(\varphi_{Z_t}(F_p)$ is the solid red line. We see that $d(p_i, \Aff(\varphi_{Z_t}(F_{p_i})))$ still decreases even though the pushed forward point $\varphi_{Z_t}(q_i)$ gets further away from $p_i$. This difficulty occurs because $F_{p_i}$ is a facet of dimension at least 1. This leads us to the following

\begin{cor}\label{cor:optim-cone}
Given the same setup as Proposition \ref{prop:optim-cone}, suppose that each $q_i$ is a vertex of $Z$ (equivalently, each $F_{p_i}$ is a dimension 0 facet). Then $\mathcal{C}(P,Z)$ has empty interior if and only if $Z$ is a local minimum.
\end{cor}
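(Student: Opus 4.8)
Here is a plan for proving Corollary~\ref{cor:optim-cone}.

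The statement is an equivalence, and one implication is immediate: if $\mathcal{C}(P,Z)$ has nonempty interior, pick $(\Delta Q,\Delta\mu)$ in its interior and apply Proposition~\ref{prop:optim-cone} to get $\epsilon>0$ with $d_P(Z(Q+t\Delta Q,\mu+t\Delta\mu))<d_P(Z)$ for all $t<\epsilon$, so $Z$ is not a local minimum. Hence ``$Z$ a local minimum $\Rightarrow\mathcal{C}(P,Z)$ has empty interior'' already holds, and this half does not use the hypothesis that the $q_i$ are vertices. The work is in the converse, so the plan is to assume $\mathcal{C}(P,Z)$ has empty interior (with each $q_i\in\VERT(Z)$) and deduce that $Z$ is a local minimum.

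First I would unpack ``empty interior''. Since $d_P(Z)>0$, each $p_i\neq q_i$, so each map $(\Delta Q,\Delta\mu)\mapsto\langle\Delta Q e_i+\Delta\mu,\,p_i-q_i\rangle$ is a nonzero linear form; therefore the polyhedral cone $\mathcal{C}(P,Z)$ has nonempty interior exactly when the strict system $\langle\Delta Q e_i+\Delta\mu,\,p_i-q_i\rangle>0\ (\forall i)$ is solvable. Thus ``empty interior'' is equivalent to the statement that for every direction $(\Delta Q,\Delta\mu)$ there is an index $i$ with $\langle\Delta Q e_i+\Delta\mu,\,p_i-q_i\rangle\le 0$. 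Next I would invoke Theorem~\ref{thm:local} to work inside the neighbourhood $U\ni Z$ on which $d_P$ equals the explicit maximum \eqref{eq:localdp}; it then suffices to show that for every $Z'=Z(Q+\Delta Q,\mu+\Delta\mu)\in U$ the single term of \eqref{eq:localdp} indexed by the bad pair $(p_i,q_i)$ is already $\ge d_P(Z)$. Here $\varphi_{Z'}(q_i)-q_i=\Delta Q e_i+\Delta\mu$ by linearity of the lift.

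Estimating that term splits into two cases. If $p_i$ is a vertex of $P$, then $F_{p_i}$ is the smallest face of $Z$ containing $\Pi(p_i,Z)=q_i$; since $q_i$ is a vertex, $F_{p_i}=\{q_i\}$, so the term is $d\bigl(p_i,\Aff(\varphi_{Z'}(F_{p_i}))\bigr)=\lN p_i-\varphi_{Z'}(q_i)\rN_2$, whose square is $\lN p_i-q_i\rN_2^2-2\langle p_i-q_i,\,\Delta Q e_i+\Delta\mu\rangle+\lN\Delta Q e_i+\Delta\mu\rN_2^2$. Otherwise $q_i$ is a vertex of $Z$ and $p_i=\Pi(q_i,P)$, and the term is $d\bigl(\varphi_{Z'}(q_i),\Aff(F_{q_i})\bigr)$, where $F_{q_i}$ is the \emph{fixed} smallest face of $P$ containing $p_i$; the second locality condition puts $q_i-p_i$ in the relative interior of the normal cone to $P$ at $p_i$, hence $q_i-p_i\perp\Aff(F_{q_i})$, and splitting $\Delta Q e_i+\Delta\mu$ into components parallel and perpendicular to $\Aff(F_{q_i})$ yields $d\bigl(\varphi_{Z'}(q_i),\Aff(F_{q_i}))\bigr)^2=\lN q_i-p_i\rN_2^2-2\langle p_i-q_i,\,\Delta Q e_i+\Delta\mu\rangle+(\text{a nonnegative term})$. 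In both cases the squared term equals $d_P(Z)^2-2\langle p_i-q_i,\,\Delta Q e_i+\Delta\mu\rangle+(\text{something}\ge 0)$, so $\langle p_i-q_i,\,\Delta Q e_i+\Delta\mu\rangle\le 0$ forces it to be $\ge d_P(Z)$. As this term is one of those maximised in \eqref{eq:localdp} at $Z'$, we get $d_P(Z')\ge d_P(Z)$ for all $Z'\in U$, i.e.\ $Z$ is a local minimum.

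I expect the only subtle step — and the reason the hypothesis ``each $q_i\in\VERT(Z)$'' is indispensable — to be the first case above. If $F_{p_i}$ had positive dimension, the relevant term would be the distance from the \emph{fixed} point $p_i$ to the \emph{moving} affine space $\Aff(\varphi_{Z'}(F_{p_i}))$, which can tilt toward $p_i$ so that the distance drops even though $\varphi_{Z'}(q_i)$ itself recedes from $p_i$ — exactly the configuration drawn just before the corollary. Forcing each $q_i$ to be a vertex collapses $\Aff(\varphi_{Z'}(F_{p_i}))$ to the single point $\varphi_{Z'}(q_i)$ and removes this obstruction; the remaining ingredients (the cone-interior characterisation and the two Pythagorean identities) are routine.
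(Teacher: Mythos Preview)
Your proposal is correct and follows essentially the same route as the paper's proof: one direction is Proposition~\ref{prop:optim-cone}, and for the converse you work inside the neighbourhood $U$ of Theorem~\ref{thm:local}, pick for each direction $(\Delta Q,\Delta\mu)$ an index $i$ with $\langle\Delta Q e_i+\Delta\mu,\,p_i-q_i\rangle\le 0$, and show the corresponding term of \eqref{eq:localdp} does not drop below $d_P(Z)$, using that $p_i-q_i\perp\Aff(F_{q_i})$.

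Your write-up is in fact a little more careful than the paper's in two places. First, you correctly state that empty interior of $\mathcal{C}(P,Z)$ only gives, for each direction, \emph{some} index $i$ with the inner product $\le 0$; the paper's proof asserts this holds ``for all $i$'', which is not what empty interior means, though only the existential version is needed since $d_P$ is a maximum. Second, you explicitly separate the two sub-cases ($p_i\in\VERT(P)$ with $F_{p_i}=\{q_i\}$ versus $q_i\in\VERT(Z)$ with $p_i=\Pi(q_i,P)$) and give the Pythagorean expansions for each, whereas the paper treats only the second sub-case via the picture in Figure~\ref{fig:pf:optim-cone-cor}. Your discussion of why the hypothesis ``each $q_i\in\VERT(Z)$'' is essential matches the paper's motivating paragraph preceding the corollary.
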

\begin{proof}
We have already shown one direction in Proposition \ref{prop:optim-cone}. For the other direction, assume that $\mathcal{C}(P,Z)$ has empty interior. This means that any $(\Delta Q, \Delta \mu) \in \R^{n \times d} \times \R^d$ satisfies $\langle \Delta Q e_i + \Delta \mu, p_i - q_i \rangle \leq 0$ for all $i$. Note that for any $i$, we have that $p_i - q_i$ is orthogonal to $F_{q_i}$ because the Hausdorff distance is using the Euclidean metric. This means that for any perturbation $Z_t = Z(Q + t \Delta Q, \mu + t \Delta \mu)$, the pushforward $\varphi_{Z_t}(q_i)$ will not be closer to $F_{q_i}$ than $q_i$ because $\langle \varphi_{Z_t}(q_i) - q_i, p_i - q_i \rangle = \langle \Delta Q e_i + \Delta \mu , p_i - q_i \rangle \leq 0$ (see Figure \ref{fig:pf:optim-cone-cor}). Therefore all of the terms achieving the maximum in the local expression for $d_P$ will not decrease for any $t$ and for any choice of $(\Delta Q, \Delta \mu)$. Since $d_P$ is a maximum of these terms, this implies that $d_P$ will not decrease either. This shows that $Z$ is a local minimum.
\end{proof}

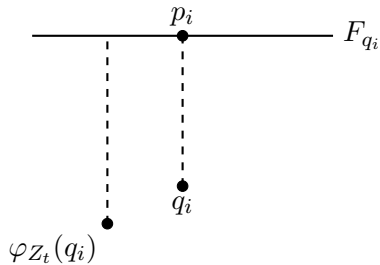
\begin{figure}
\begin{center}
\begin{tikzpicture}
\draw[thick] (-2,2) -- (2,2);
\draw[dashed, thick] (0,0) -- (0,2);
\draw[dashed, thick] (-1,-0.5) -- (-1,2);
\draw[fill=black] (0,2) circle (2pt) node[anchor=south] {$p_i$};
\draw[fill=black] (0,0) circle (2pt) node[anchor=north] {$q_i$};
\draw[fill=black] (-1,-0.5) circle (2pt) node[anchor=north east] {$\varphi_{Z_t}(q_i)$};
\node[anchor=west] at (2,2) {$F_{q_i}$};
\end{tikzpicture}
\end{center}
\caption{When $\langle \varphi_{Z_t}(q_i) - q_i , p_i - q_i \rangle \leq 0$, the distance from $\varphi_{Z_t}(q_i)$ to $F_{q_i}$ is greater than or equal to the distance from $q_i$ to $F_{q_i}$ because $(q_i - p_i) \perp F_{q_i}$.}
\label{fig:pf:optim-cone-cor}
\end{figure}

As a final note, we note an implication that this corollary has on the coarse Hausdorff distance $d_P^c$. Recall $d_P^c(Z)$ is the Hausdorff distance restricted to only the vertices of $P$ and $Z$ (Equation \ref{eq:dc}). Since all pairs $(p,q) \in P \times Z$ achieving $d_P^c(Z)$ are by definition vertices, the assumptions in the above corollary always apply and therefore we have a necessary and sufficient condition for all local minima of $d_P^c$; namely that $\mathcal{C}(P,Z)^\circ$ is empty. This shows that $d_p^c$ can have local minima which are not necessarily local minima of $d_P$.

\section{Optimizing $d_P$}
\label{sec:alg}

In this section, we describe an optimization algorithm for $d_P$ based on the subgradient method that leverages the feasibility cone $\mathcal{C}(P,Z)$. We use this cone to choose a subgradient of $d_P$ that can lead to a guaranteed decrease in $d_P$ for sufficiently small step sizes. Using the ideas from the proof of Proposition \ref{prop:optim-cone}, we will also establish three techniques for choosing a step size that balance convergence rate and monotonicity of the subgradient descent.

\subsection{Subdifferentials and the Subgradient Method}

We will first briefly remind the reader of subdifferentials and the subgradient method. For convex, continuous, possibly non-smooth functions $f: \R^d \to \R$, subgradient descent is a well-understood optimization technique inspired by standard gradient descent. It consists of generating a sequence $\{x_k\}_{k=0}^\infty \subset \R^d$ using the rule
\begin{equation}\label{eq:subgrad-method}
x_{k+1} = x_{k} - h_{k}(x_k) \nabla f(x_k) 
\end{equation}
where $\nabla f(x_k)$ is a \emph{subgradient} of $f$ at $x_k$ and $h_k(x_k)$ is a step size. 
\begin{defn}
For a convex function $f: U \to \R$ defined on an open set $U \subset \R^d$, a vector $v \in \R^d$ is a \emph{subgradient} of $f$ at $x_0$ if for all $x \in U$:
\[ f(x) - f(x_0) \geq \langle v, x-x_0 \rangle \]
\end{defn}
The collection of all subgradients of $f$ at $x_0$ is called the subdifferential $\del f(x_0)$. When $f$ is differentiable at $x_0$, the subdifferential $\del f(x_0)$ is a singleton set consisting of the gradient of $f$ at $x_0$.  Under mild conditions on $f$ and the step sizes, the subgradient method (\ref{eq:subgrad-method}) converges to the global minimum of $f$ \cite{Shor85}. It also has the useful property that $0 \in \del f(x_0)$ if and only if $x_0$ is the global minimum of $f$.

\begin{exmp}
Let $f: \R \to \R$ be the function $x \mapsto |x|$. Then it is a straightforward calculation to show that
\[ \del f(x) = \left\{ \begin{matrix} \{-1\} & x<0 \\ [-1,1] & x = 0 \\ \{1\} & x > 0 \end{matrix} \right.\]
Indeed we see that the only point for which $\del f(x)$ contains $0$ is the global minimum $x = 0$. 

Suppose instead $f(x) = \max(0,x-1) + \min(0, x+1)$ (see Figure \ref{fig:ex:nonconvex}). This is no longer convex, and we see
\[ \del f(x) = \left\{ \begin{matrix} \emptyset & x < 1 \\ \{ 1 \} & x \geq 1 \end{matrix} \right. \]
\end{exmp}

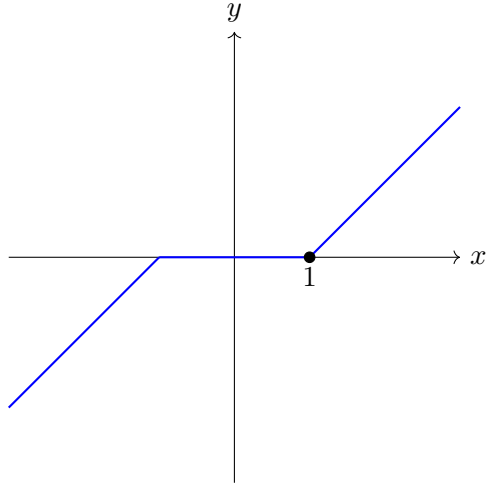
\begin{figure}
\begin{center}
\begin{tikzpicture}
  \draw[->] (-3, 0) -- (3, 0) node[right] {$x$};
  \draw[->] (0, -3) -- (0, 3) node[above] {$y$};
  \draw[scale=1, domain=-1:1, smooth, variable=\x, blue, thick] plot ({\x}, {0});
  \draw[scale=1, domain=-3:-1, smooth, variable=\x, blue, thick] plot ({\x}, {\x+1});
  \draw[scale=1, domain=1:3, smooth, variable=\x, blue, thick] plot ({\x}, {\x-1});
  \draw[fill=black] (1,0) circle (2pt) node[anchor=north]{1};
\end{tikzpicture}
\end{center}
\caption{A non-convex function with empty subdifferentials for $x < 1$.}
\label{fig:ex:nonconvex}
\end{figure}

As we saw in the above examples, when $f$ is not convex the subdifferential can be empty. In order to extend the subgradient method to non-convex functions, there are various localized versions of the subdifferential (see, for example \cite{LMK20}). One such version is the \emph{Clarke subdifferential}.
\begin{defn}
Let $f: \R^d \to \R$ be locally Lipchitz. The \emph{Clarke subdifferential} is:
\[ \del_Cf(x_0) = \Conv\left\{ v \in \R^d \mid \exists x_k \to x_0 \ \text{s.t.} \ \nabla f(x_k) \ \text{exists and } \nabla f(x_k) \to v \right \} \]
\end{defn}
This is a closed polyhedral set, and the Lipschitz property guarantees that it is nonempty for every $x$ in the domain of $f$ \cite{Clarke90}. Moreover, this coincides with the subdifferential $\del f(x)$ when $f$ is convex. The Clarke subdifferential also shares the following property to the usual subdifferential.

\begin{prop}[\cite{Clarke90}]
If $x_0$ is a local minimum for $f: \R^d \to \R$, then $0 \in \del_Cf(x_0)$.
\end{prop}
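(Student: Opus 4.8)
The plan is to argue by contradiction: assume $x_0$ is a local minimum but $0 \notin \del_C f(x_0)$, and produce points arbitrarily close to $x_0$ at which $f$ takes a value strictly below $f(x_0)$, contradicting minimality.

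First I would record that the set
\[ S = \{\, v \in \R^d \mid \exists\, x_k \to x_0 \text{ with } \nabla f(x_k) \text{ existing and } \nabla f(x_k) \to v \,\} \]
is compact: it is bounded because $f$ is Lipschitz with some constant $L$ on a neighborhood of $x_0$, so every gradient that exists there has norm at most $L$, and it is closed by a diagonal argument. Hence $\del_C f(x_0) = \Conv(S)$ is a compact convex set, so if it omits the origin the strict separating hyperplane theorem produces a unit vector $v$ and a constant $c > 0$ with $\langle v, \zeta \rangle \geq c$ for every $\zeta \in \del_C f(x_0)$, in particular every $\zeta \in S$. A short compactness argument then upgrades this to a radius $\rho > 0$ such that $\langle \nabla f(y), v \rangle \geq c/2$ at every $y \in B_\rho(x_0)$ where $\nabla f$ exists: otherwise a sequence $y_k \to x_0$ with $\langle \nabla f(y_k), v \rangle < c/2$ would, after passing to a convergent subsequence of its (bounded) gradients, contradict the separation.

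Next I would invoke Rademacher's theorem together with Fubini. Write $\R^d = v^\perp \oplus \R v$, let $N$ be the Lebesgue-null set where $\nabla f$ fails to exist, and decompose $x_0 = (\bar w, \bar s)$ in these coordinates. For almost every $w \in v^\perp$ the line $\{(w,s) : s \in \R\}$ meets $N$ in a set of one-dimensional measure zero; call such $w$ good. Since the good $w$ are dense, I may choose a good $w$ arbitrarily close to $\bar w$ and set $y = (w, \bar s)$, a point near $x_0$ whose line in direction $v$ is good. The map $s \mapsto f(y - sv)$ is Lipschitz, hence absolutely continuous, so for $t > 0$ small enough that the segment $\{y - sv : s \in [0,t]\}$ stays inside $B_\rho(x_0)$,
\[ f(y - t v) - f(y) = -\int_0^t \langle \nabla f(y - sv), v \rangle \, ds \leq -\frac{c}{2}\, t, \]
since the integrand exists and is $\geq c/2$ for almost every $s$. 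Letting $w \to \bar w$ and using continuity of $f$ gives $f(y) < f(x_0) + \frac{c}{4} t$ once $w$ is close enough, whence $f(y - t v) < f(x_0) - \frac{c}{4} t < f(x_0)$. As $y - tv$ can be made as close to $x_0$ as desired (shrink $|w - \bar w|$, then shrink $t$), this contradicts $x_0$ being a local minimum, and therefore $0 \in \del_C f(x_0)$.

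The main obstacle is the measure-theoretic bookkeeping in the third paragraph: one cannot simply integrate $\nabla f$ along the segment through $x_0$ itself, since that single line might lie entirely in the bad set $N$ — the Fubini step is precisely what lets us replace $x_0$ by a nearby point on a good line while keeping the integrand bounded below by $c/2$ throughout a fixed neighborhood. The separation step and the final continuity limit are routine. Alternatively one could quote the equivalence of the stated definition with the one via the Clarke directional derivative $f^{\circ}(x_0; v) = \limsup_{y \to x_0,\, t \downarrow 0} \frac{f(y + tv) - f(y)}{t}$, from which $0 \in \del_C f(x_0)$ is immediate because $f^{\circ}(x_0; v) \geq 0$ in every direction at a local minimum; but proving that equivalence is itself the content of the Fubini argument above.
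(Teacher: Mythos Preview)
The paper does not prove this proposition at all; it merely quotes it with a citation to Clarke's book. Your argument is therefore not to be compared against an existing proof in the paper, but it is a correct and self-contained one. The key steps---compactness of the set $S$ of limiting gradients, strict separation of $0$ from $\del_C f(x_0)$, the uniform lower bound $\langle \nabla f(y),v\rangle \geq c/2$ on a small ball via a subsequence argument, and the Rademacher--Fubini trick to find a nearby line along which one can legitimately integrate---are all sound. One small point worth tightening is the order in which $t$ and $w$ are chosen in the last paragraph: fix $t<\rho/2$ first, then pick a good $w$ with $|w-\bar w|<\rho/2$ and also small enough (by continuity of $f$) that $f(y)<f(x_0)+\tfrac{c}{4}t$; as written your phrasing ``shrink $|w-\bar w|$, then shrink $t$'' suggests the opposite order, which would not quite close.

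For comparison, the textbook route (the one your final paragraph alludes to) observes that at a local minimum the Clarke directional derivative satisfies $f^{\circ}(x_0;v)\geq 0$ for every $v$, and then invokes the duality $\del_C f(x_0)=\{\zeta:\langle \zeta,v\rangle\leq f^{\circ}(x_0;v)\ \forall v\}$ to conclude $0\in\del_C f(x_0)$ immediately. That approach hides the analysis inside the proof that this support-function description agrees with the limiting-gradient definition used in the paper; your argument unpacks exactly that equivalence in the special case needed, which makes it more elementary and more transparent about where Rademacher and Fubini enter.
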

It is important to note that the converse doesn't necessarily hold in general. For a more detailed treatment of the Clarke subdifferential and its relationship to optimality, consult \cite{Clarke90,LMK20,RW98}.

\begin{exmp}
Returning to the above example, the Clarke subdifferential of $f(x) = \max(0,x-1) + \min(0,x+1)$ is easily seen to be:
\[ \del_Cf(x) = \left\{ \begin{matrix} \{ -1\} & x < -1 \\ [-1,0] & x = -1 \\ \{ 0 \} & -1 < x < 1 \\ [0,1] & x = 1 \\ \{ 1\} & x> 1 \end{matrix} \right.\]
Indeed, $0\in \del_Cf(x)$ for $x \in [-1,1]$ but none of these points are local minima.
\end{exmp}

While there is much known about the subgradient method when $f$ is convex (see \cite{RW98}), there are much fewer such guarantees when $f$ is non-convex. The primary difficulty is that, in general, the step sizes and choice of subgradient are not easily ascertained to lead to a decrease in the objective. Many modern algorithms exist that assume extra structure on $f$ and/or its subgradients, such as bundle methods, smoothing, and gradient sampling (\cite{BT21, Kiw86, BJ13}). 


In the remainder of this section, we describe how the feasibility cone $\mathcal{C}(P,Z)$ can be leveraged to determine good subgradients and step sizes for optimizing $d_P$. 

\subsection{Direction finding}
We showed in Proposition \ref{prop:optim-cone} that any point in the interior of $\mathcal{C}(P,Z)$ gives an update step to $Z$ that decreases $d_P(Z)$, provided the step size is less than some $\epsilon$. This motivates us to consider only the subgradients that lie in the interior of this cone as potential directions for subgradient descent.

\begin{defn}
Let $P$ be a fixed polytope and $Z \in \ZZ_n(\R^d)$ a zonotope, and suppose that they satisfy the locality conditions 1) and 2). Then we define the \emph{feasible subdifferential} of $d_P$ to be:
\[ \FF(P,Z) = \left\{ \begin{matrix} \mathcal{C}(P,Z)^\circ \cap \del_C d_P(Z) & \text{if $\mathcal{C}(P,Z)$ has nonempty interior} \\ \del_C d_P(Z) & \text{otherwise} \end{matrix} \right. \]
where $\del_C d_P$ is the Clarke subdifferential of $d_P$.
\end{defn}
Note that since both $\del_C d_P(Z)$ and $\mathcal{C}(P,Z)$ are polyhedral sets, the closure of the feasible subdifferential is also a polyhedral set. 

Our choice of subgradient for optimizing $d_P$ will be an element of $\FF(P,Z)$. Recall from the proof of Proposition \ref{prop:optim-cone} that the closer a point $(\Delta Q, \Delta \mu) \in \mathcal{C}(P,Z)$ is to the boundary, the smaller the value of the perturbation limit $\epsilon$ that we constructed. Therefore, points that are further from the boundary will have higher perturbation limits, and hence can give larger step sizes. Motivated by this property, our choice of subgradient will be the \emph{Chebyshev center} (see remark below) of the closure of $\FF(P,Z)$. It is also possible that $\FF(P,Z)$ can be empty. In this case, we interpret $Z$ as being a local minimum, and so no further step direction is necessary.

\begin{rem}
Recall that the Chebyshev center of a convex polytope $X\subset \R^d$ is:
\[ X_{\text{Ch}} = \argmin_{c \in \R^d} \max_{x\in X} \lN x- c \rN_2 \]
It is the center of the maximal-radius ball entirely contained in $X$.
\end{rem}

To compute $\overline{\FF(P,Z)}_{\text{Ch}}$, we first need to determine if $\mathcal{C}(P,Z)$ has empty interior. To do this, one can compute the v-representation of this cone and check that the extremal rays form a spanning set of $\R^d$. We also need to be able to compute $\del_C d_P(Z)$. We give a detailed account of this computation in the next subsection.

Finally, the Chebyshev center of $\overline{\FF(P,Z)}$ is found by extracting its h-representation $\overline{\FF(P,Z)} = \{ x \mid \langle a_i , x \rangle \leq b_i, i = 1,...,m\}$ and solving the following linear program for $x_c$ and $r$:
\[ \begin{array}{l} \text{maximize } r \\ \text{s.t.} \ \langle a_i, x_c \rangle + r \lN a_i \rN \leq b_i, \ i = 1,...,m \end{array} \]
For more details about the Chebyshev center, see \cite{BL04}. Finally, we remark that, since $d_P$ is differentiable almost everywhere, the Clarke subdifferential $\del_Cd_P$ is a single point almost everywhere. Therefore the feasible subdifferential $\mathcal{F}(P,Z)$ is often just a single point, in which cases computing the Chebyshev center is unnecessary.

\subsection{Computing $\del_C d_P(Z)$}\label{subsec:computation}

We have already shown that $d_P$ is a locally Lipschitz function and that it can be locally written as a pointwise maximum of differentiable functions in a neighborhood of a sufficiently general point $Z \in \ZZ_n(\R^d)$. This information allows us to use the following formula for $\del_C$:

\begin{prop}
Let $U\subset \R^d$ be open and $f: U \to \R$ be locally Lipschitz and suppose $f(x) = \max(f_1(x),...,f_m(x))$, where $f_i: U \to \R$ are continuously differentiable on $U$. Then for any $x \in U$ the Clarke subdifferential is:
\[ \del_C f(x) = \Conv(\{\nabla f_i(x) \mid f_i(x) = f(x) \}) \]
\end{prop}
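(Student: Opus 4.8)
The plan is to prove the formula $\del_C f(x) = \Conv(\{\nabla f_i(x) \mid f_i(x) = f(x)\})$ by establishing two inclusions, with the nontrivial work being the ``$\subseteq$'' direction. Throughout, fix $x \in U$ and let $A(x) = \{ i \mid f_i(x) = f(x)\}$ be the set of \emph{active} indices. The starting observation is that active indices dominate near $x$: by continuity of the $f_i$, there is an open ball $B \subset U$ around $x$ on which $f(y) = \max_{i \in A(x)} f_i(y)$, since any inactive $f_j$ satisfies $f_j(x) < f(x)$ and hence $f_j(y) < f_i(y)$ for $i \in A(x)$ on a small enough neighborhood. So without loss of generality we may assume every index is active at $x$, i.e. $f_i(x) = f(x)$ for all $i$.

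For the inclusion $\Conv(\{\nabla f_i(x)\}) \subseteq \del_C f(x)$: first I would show each $\nabla f_i(x)$ individually lies in $\del_C f(x)$. The idea is to find, for each $i$, a sequence $x_k \to x$ at which $f$ is differentiable with $\nabla f(x_k) \to \nabla f_i(x)$. One clean way: the set where $f$ is differentiable has full measure (Rademacher, since $f$ is locally Lipschitz), and on a small neighborhood where $f_i$ strictly dominates the others one has $f = f_i$ and hence $\nabla f = \nabla f_i$; the issue is that $f_i$ need not strictly dominate on any open set. Instead I would perturb: consider $g_i^\eta = f_i + \eta \ell$ for a generic linear functional $\ell$ and small $\eta>0$, note $\max(f_1,\dots,f_m,g_i^\eta)$ agrees with $f$ off a neighborhood and forces $g_i^\eta$ (hence $f_i$ up to $O(\eta)$) to be the strict max along a curve approaching $x$ — but this is getting complicated. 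A more robust route: it is a standard fact (which I would cite from \cite{Clarke90}, e.g. the calculus of Clarke subdifferentials) that for a finite max of $C^1$ functions one has $\del_C f(x) \supseteq \{\nabla f_i(x) : i \in A(x)\}$, obtained by approaching $x$ along directions where $f_i - f_j$ has a definite sign. Granting each $\nabla f_i(x) \in \del_C f(x)$, convexity of $\del_C f(x)$ (it is defined as a convex hull) immediately gives $\Conv(\{\nabla f_i(x)\}) \subseteq \del_C f(x)$.

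For the reverse inclusion $\del_C f(x) \subseteq \Conv(\{\nabla f_i(x)\})$: since the right-hand side is a closed convex set and $\del_C f(x)$ is the convex hull of limits $\lim \nabla f(x_k)$ over $x_k \to x$ with $f$ differentiable at $x_k$, it suffices to show every such limit $v = \lim_k \nabla f(x_k)$ lies in $\Conv(\{\nabla f_i(x)\})$. At each such $x_k$, since $f$ is differentiable there and $f = \max_i f_i$ locally, the gradient $\nabla f(x_k)$ must equal $\nabla f_{i_k}(x_k)$ for some index $i_k$ that is active at $x_k$ — indeed if $f_{i_k}(x_k) = f(x_k)$ and $f$ is differentiable at $x_k$, then $f_{i_k} \le f$ near $x_k$ with equality at $x_k$ forces $\nabla f_{i_k}(x_k) = \nabla f(x_k)$ (a differentiable function minus a $C^1$ minorant that is tangent at a point has vanishing gradient there — formally, $f - f_{i_k}$ has a local min at $x_k$, and being differentiable there its gradient is zero). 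Passing to a subsequence we may assume $i_k \equiv i$ is constant; then $i$ is active at $x$ too, because $f_i(x_k) = f(x_k) \to f(x)$ and $f_i(x_k) \to f_i(x)$ by continuity, so $f_i(x) = f(x)$. By $C^1$-continuity of $\nabla f_i$, $\nabla f(x_k) = \nabla f_i(x_k) \to \nabla f_i(x)$, so $v = \nabla f_i(x) \in \{\nabla f_i(x) : i \in A(x)\} \subseteq \Conv(\{\nabla f_i(x)\})$. Taking convex hulls of all such $v$ preserves the containment, finishing this direction.

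The main obstacle is the ``each $\nabla f_i(x)$ lies in $\del_C f(x)$'' step in the first inclusion: it requires producing differentiability points $x_k \to x$ with $\nabla f(x_k) \to \nabla f_i(x)$ even when $f_i$ never strictly dominates on an open set, which is genuinely the delicate part of the argument (the other half only uses the easy direction that active-at-$x_k$ indices are active-at-$x$ in the limit). I would handle it by the standard perturbation/approach-direction technique from Clarke's calculus, or simply invoke the corresponding result in \cite{Clarke90} (Proposition 2.3.12 there, on maxima of smooth functions), and then combine with convexity of $\del_C f(x)$. I should also double-check that ``$f$ differentiable at $y$ and $f_i(y) = f(y)$ implies $\nabla f_i(y) = \nabla f(y)$'' is applied only at points $y$ where the relevant $f_i$ is active, which is exactly how it is used above.
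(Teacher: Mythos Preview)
Your proposal is correct and follows essentially the same two-inclusion strategy as the paper: show each active $\nabla f_i(x)$ is a limiting gradient (hence the convex hull is contained in $\partial_C f(x)$), and show every limiting gradient equals some active $\nabla f_i(x)$ (giving the reverse inclusion). If anything you are more scrupulous than the paper, which simply asserts the existence of a sequence $x_k \to x$ along which $f_i$ is the \emph{unique} maximizer---precisely the step you flag as delicate and propose to handle via Clarke's Proposition~2.3.12---and which compresses your careful ``local minimum of $f - f_{i_k}$'' argument into the one-line claim that $\nabla f = \nabla f_i$ wherever $\nabla f$ exists.
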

\begin{proof}
Any function $f_i$ achieving the maximum at $x$ has the property that there exists $x_k \to x$ such that $f(x_k) = f_i(x)$ and $f(x_k) < f_j(x_k)$ for $j\neq i$. Since $\nabla f_i$ is continuously differentiable, $\nabla f_i(x_k) \to \nabla f_i(x)$. Moreover $\nabla f_i(x_k) = \nabla f(x_k)$ because $f_i$ uniquely achieves the maximum at $x_k$. Therefore $\nabla f_i(x)$ is a limiting gradient. Every limiting gradient is of this form, since wherever $\nabla f$ is defined, it is equal to some $\nabla f_i$. Thus $\del_C f(x)$, which is the convex hull of the limiting gradients, is as claimed.
\end{proof}

As a warm-up, we first use this to compute $\del_C d^c_P(Z)$, which is the Clarke subdifferential of the coarse Hausdorff distance $d_P^c$. Recall this is the Hausdorff distance between the vertex sets of $P$ and $Z$, which is an upper bound for $d_P$ and is simpler to compute.

\begin{cor}\label{cor:coarse-grads}
Let $Z \in \ZZ_n(\R^d)$ be a general position zonotope and let $(p_1,q_1),...,(p_k,q_k)$ be the pairs of vertices where $d_P^c(Z)$ is achieved. Let $e_i = \Lift(q_i)$ and $Q,\mu$ be such that $Z = Z(Q,\mu)$. Then:
\[ \del_C d_P^c(Z) = \Conv\left( \{ \nabla (\lN p_i - Qe_i - \mu \rN_2) \} \right) \]
where the gradient is taken in terms of the parameters $Q,\mu$.
\end{cor}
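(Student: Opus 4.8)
The plan is to apply the preceding proposition on Clarke subdifferentials of a pointwise maximum of smooth functions directly to the local expression for $d_P^c$. First I would observe that for a general position zonotope $Z$, the vertices of $Z$ are exactly the stable cubical vertices (Corollary \ref{cor:stable}), and that each such vertex $q$ has a unique lift $e = \Lift(q) \in C^n$, so that $q = Qe + \mu$. Hence for any vertex pair $(p,q)$ the contribution to $d_P^c$ is the function $(Q,\mu) \mapsto \lN p - Qe - \mu \rN_2$, which is smooth in a neighborhood of $Z$ as long as $p \neq Qe + \mu$; since $d_P^c(Z) > 0$ in the nondegenerate case (and if $d_P^c(Z)=0$ then $P$ and $Z$ share vertex sets and the statement is vacuous or trivial), this smoothness holds near $Z$.

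Next I would argue that in a small enough neighborhood $U$ of $Z$, the combinatorial data defining $d_P^c$ does not change: the lifts $e_i$ of the vertices $q_i$ of $Z$ remain the lifts of the corresponding vertices of the perturbed zonotope (using stability and Lemma \ref{lem:stable}, or more simply that general position and the vertex/cubical-vertex correspondence are open conditions), and the nearest-neighbor pairing structure in the definition \eqref{eq:dc} of $d^c$ is locally constant at a point where the pairs achieving the max are isolated. Thus on $U$ we may write
\[ d_P^c(Z') = \max_i \lN p_i' - Q' e_i - \mu' \rN_2 \]
taking the maximum over a fixed finite index set of candidate vertex pairs (including those not currently achieving the maximum), where $p_i'$ is either a fixed vertex of $P$ or the image $Q'e_i + \mu'$ of a vertex of $Z'$, in which case the corresponding term is identically zero and contributes nothing to the max. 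Each term is continuously differentiable on $U$.

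Then I would invoke the proposition on $\del_C$ of a max of $C^1$ functions: $\del_C d_P^c(Z)$ is the convex hull of the gradients $\nabla(\lN p_i - Qe_i - \mu \rN_2)$ over exactly those indices $i$ for which the term achieves the maximum at $Z$, i.e. over the pairs $(p_i,q_i)$, $i = 1,\dots,k$, where $d_P^c(Z)$ is achieved. This is precisely the claimed formula. The only mild subtlety — and the step I would be most careful about — is the local constancy of the combinatorial structure: one must ensure that shrinking $U$ does not allow a new vertex pair to enter the argmax discontinuously and that the identification of vertices of $Z'$ with cubical vertices via fixed lifts $e_i$ is valid throughout $U$; both follow from general position being open and from the fact that the finitely many terms are continuous, so any term not achieving the max at $Z$ stays strictly below $d_P^c(Z)$ on a neighborhood. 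Once that is in place the result is immediate from the cited proposition.
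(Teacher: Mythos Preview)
Your proof is correct and follows the same approach as the paper: write $d_P^c(Z) = \max_i \lN p_i - q_i \rN_2$ with $q_i = Qe_i + \mu$, then apply the preceding proposition on the Clarke subdifferential of a pointwise maximum of $C^1$ functions. The paper's own proof is a two-line sketch that omits the local-constancy argument you spell out; your added care there (stability of the lifts and of the nearest-neighbor pairing on a neighborhood) is exactly what is needed to justify applying the proposition on an open set rather than at a single point.
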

\begin{proof}
Observe that $d_P^c(Z) = \max_i (\lN p_i - q_i \rN_2)$ and that $q_i = Qe_i +\mu$. Then apply the above Proposition.
\end{proof}

Using Theorem \ref{thm:local}, we also arrive at the analogous formula for $\del_C d_P(Z)$:

\begin{cor}
Let $P$ and $Z_0 \in \ZZ_n(\R^d)$ satisfy the locality conditions 1) and 2), and let $(p_1,q_1),...,(p_k,q_k)$ be the pairs of points where $d_P(Z_0)$ is achieved. Further, let $U$ be the neighborhood from Theorem \ref{thm:local} under which $d_P|_U$ is a maximum of distances between points and affine subspaces. Without loss of generality, assume that $q_1,...,q_\ell \in \VERT(Z_0)$ and $q_{\ell+1},...,q_k \notin \VERT(Z_0)$. Then for all $Z \in U$:
\[ \del_C d_P(Z) = \Conv\left(\{ \nabla \delta_{q_i}(Z) \}_{i=1,...,\ell} \cup \{ \nabla \delta^{p_j}(Z)\}_{i=\ell+1,...,k} \right) \]
Where for $q \in \VERT(Z_0)$, the function $\delta_q$ is $Z \mapsto d(\varphi_Z(q), \Aff(F_{q}))$ and for $p \in \VERT(P)$, the function $\delta^p$ is $Z \mapsto d(p, \Aff(\varphi_Z(F_p)))$. For definitions of $\varphi_Z, \delta, F_p$ and $F_q$, refer to Theorem \ref{thm:local}.
\end{cor}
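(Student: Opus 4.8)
The plan is to obtain the formula as a direct consequence of Theorem \ref{thm:local} together with the proposition above computing the Clarke subdifferential of a pointwise maximum of continuously differentiable functions; no new analysis is required, only bookkeeping. First I would invoke Theorem \ref{thm:local}: shrinking $U$ if necessary, on $U$ we have $d_P = \max$ of the finite family $\{\delta^p\}_{p\in\VERT(P)} \cup \{\delta_q\}_{q\in\VERT(Z_0)}$, each member a square root of a bounded rational function in $(Q,\mu)$. Since $Z_0$ is a general position zonotope we may assume $d_P(Z_0)>0$; then, after passing to a neighbourhood on which $d_P>0$ and on which the (finitely many) members strictly dominated at $Z_0$ remain strictly dominated, $d_P$ is the maximum of the remaining members, each of which is now bounded away from $0$ and therefore $C^1$. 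Since $d_P|_U$ is locally Lipschitz, the max-rule proposition applies and gives, for every $Z\in U$,
\[
\del_C d_P(Z) = \Conv\Bigl( \{\nabla\delta^p(Z) : p\in\VERT(P),\ \delta^p(Z)=d_P(Z)\} \cup \{\nabla\delta_q(Z) : q\in\VERT(Z_0),\ \delta_q(Z)=d_P(Z)\} \Bigr).
\]

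It then remains to identify the active set appearing here, at $Z=Z_0$, with the achieving pairs $(p_1,q_1),\dots,(p_k,q_k)$. Because $\varphi_{Z_0}$ is the identity, $\delta^p(Z_0)=d(p,\Aff(F_p))$, which equals $d(p,Z_0)$ by the argument behind (\ref{eq:alt-Hausdorff}); hence $\delta^p$ is active at $Z_0$ precisely when $p\in\VERT(P)$ realizes the $P$-side of the Hausdorff distance, i.e. when $(p,\Pi(p,Z_0))$ is one of the achieving pairs, and symmetrically $\delta_q$ is active precisely when $q\in\VERT(Z_0)$ realizes the $Z_0$-side. Thus to each achieving pair I would attach the differentiable term from Theorem \ref{thm:local} coming from the side that it realizes: $\delta^{p_i}$ when the pair is realized from the $P$-side, $\delta_{q_i}$ when from the $Z_0$-side. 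When a pair is realized from both sides --- which forces $p_i\in\VERT(P)$ and $q_i\in\VERT(Z_0)$ to be mutually nearest --- the faces $F_{p_i}$ and $F_{q_i}$ are the singletons $\{q_i\}$ and $\{p_i\}$, so $\delta^{p_i}(Z)=\lN p_i-\varphi_Z(q_i)\rN_2=\delta_{q_i}(Z)$ as functions on $U$ and the two gradients coincide; hence the choice of representative is immaterial and does not affect the convex hull. With $q_1,\dots,q_\ell$ labelled as in the statement, this yields the asserted formula at $Z_0$. For $Z\in U$ with $Z\neq Z_0$ the active set can only shrink (a term strictly dominated at $Z_0$ remains strictly dominated on a neighbourhood, by continuity), so the displayed identity persists with the listed family serving as the maximal possible active set --- generically reducing to a single gradient, since $d_P$ is differentiable off a set of measure zero.

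I do not expect a genuine obstacle. The two analytic ingredients --- that each $\delta^p,\delta_q$ has the square-root-of-rational form and is locally $C^1$, and that the projection faces $F_p,F_q$ are locally constant near $Z_0$ --- are exactly the content of Theorem \ref{thm:local} and Remark \ref{rem:Hausdorff-stable}, and the Clarke-subdifferential formula for a maximum of $C^1$ functions has already been established. The only step requiring care is the matching of achieving pairs to active differentiable terms in the second paragraph, namely the points that each pair must be matched to the term of its realizing side and that in the doubly-realized case the two candidate terms are equal; everything else is routine substitution into Theorem \ref{thm:local}.
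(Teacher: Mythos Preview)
Your proposal is correct and follows exactly the route the paper intends: the corollary is stated without proof precisely because it is an immediate application of the max-rule proposition for Clarke subdifferentials to the local expression (\ref{eq:localdp}) from Theorem \ref{thm:local}. Your additional care --- ensuring $d_P>0$ so that the square-root terms are $C^1$, and verifying that in the doubly-realized case $\delta^{p_i}$ and $\delta_{q_i}$ coincide as functions --- goes beyond what the paper spells out but is entirely appropriate and does not constitute a different method.
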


We have thus reduced the problem of computing the Clarke subgradient of $d_P$ to computing gradients of the functions $\delta_q$ and $\delta^p$, which are both distances between a point and an affine subspace. Since these vary by how they depend on $Z$, we have to treat them separately. 

Both $\delta_q$ and $\delta^p$ are distances between a point and an affine subset, and so it will be useful for us to first establish an explicit formula for such a quantity. Let $W$ be an affine subspace and $u$ be a point. The distance between $W$ and $u$ is defined to be the distance between $u$ and the projection of $u$ onto $W$:
\[ d(u,W) = \lN u - \Pi(u,W) \rN_{2} \]
For this to be useful to us, we will need to find a more explicit expression for it when $W$ is given as an intersection of codimension 1 affine subsets.
\begin{lem}\label{lem:affine-proj}
Let $W \subset \R^d$ be an affine subspace and $a = \Pi(0,W)$, so that $W' = W - a$ is a subspace. Let $W^\perp$ be the orthogonal complement of $W'$. Then for any $u \in \R^d$, we have $u-\Pi(u,W) \in W^\perp$.
\end{lem}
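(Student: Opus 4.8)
The plan is to invoke the first-order optimality condition satisfied by the metric projection onto an affine subspace. Write $p = \Pi(u,W)$; this point exists and is unique because $W$, being a nonempty affine subspace, is closed and convex. I would first observe that the subspace $W'$ does not depend on the particular basepoint chosen to define it: for any $w_0 \in W$ one has $W - w_0 = W - a = W'$, since $W$ is affine. In particular $p + W' = W$, so for every direction vector $v \in W'$ the entire line $\{ p + tv : t \in \R \}$ is contained in $W$.

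Next I would use that $p$ minimizes $\lN u - x \rN_2$ over $x \in W$. Fixing $v \in W'$, consider
\[ f(t) = \lN u - (p + tv) \rN_2^2 = \lN u - p \rN_2^2 - 2t \langle u - p, v \rangle + t^2 \lN v \rN_2^2. \]
Because $p + tv \in W$ for all $t \in \R$ and $p$ is a minimizer of the distance from $u$ to $W$, the function $f$ attains its minimum at $t = 0$. Since $f$ is a smooth quadratic in $t$, this forces $f'(0) = 0$, i.e. $\langle u - p, v \rangle = 0$. As $v \in W'$ was arbitrary, $u - p$ is orthogonal to all of $W'$, which is exactly the assertion $u - \Pi(u,W) \in W^\perp$.

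I do not expect a genuine obstacle here: the statement is the classical characterization of orthogonal projection, and the only points that merit a word of justification are the existence and uniqueness of $\Pi(u,W)$ (immediate from closedness and convexity of $W$) and the fact that the perturbed point $p + tv$ remains in $W$ for all $t$, which is precisely where the affine structure of $W$ — equivalently, the basepoint-independence of $W'$ — enters.
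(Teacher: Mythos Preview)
Your argument is correct. It is the standard variational characterization of the orthogonal projection: you perturb the minimizer $p=\Pi(u,W)$ along directions $v\in W'$ that keep you inside $W$, and the vanishing of the first derivative at $t=0$ forces $\langle u-p,v\rangle=0$ for all such $v$.

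This is a genuinely different route from the paper. The paper instead invokes the orthogonal decomposition $\R^d=W'\oplus W^\perp$ up front, writes $u=\Pi(u,W')+y$ with $y\in W^\perp$, and then relates $\Pi(u,W')$ to $\Pi(u,W)$ via the translation $W'=W-a$, concluding that $u-\Pi(u,W)=y-a\in W^\perp$ since $a\in W^\perp$. Your approach is more self-contained: it does not presuppose the orthogonal decomposition or the translation behavior of $\Pi$, only the defining minimality property and the affine structure of $W$. The paper's approach, by contrast, is more algebraic and makes the decomposition $u=\Pi(u,W)+(y-a)$ explicit, which foreshadows the later use of the formula in equation~(\ref{eq:delta-explicit}). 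Either way the lemma is elementary; your proof is arguably the cleaner of the two.
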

\begin{proof}
Now, the orthogonal decomposition $\R^d = W' \oplus W^\perp$ gives us a unique expression $u = \Pi(u,W') + y$ where $y \in W^\perp$. Therefore we also have $u = \Pi(u,W')+a + (y-a)$. Since $\Pi$ is linear in both arguments, we have $\Pi(iu,W') = \Pi(u,W-a) = \Pi(u,W) - a$. Thus $u = \Pi(u,W) + (y-a)$. Finally note that $a \in W^\perp$, and so $y-a \in W^\perp$ as well.
\end{proof}
Suppose that $W$ is given to us as an intersection of codimension 1 affine subspaces $W = \bigcap_i^m H_i$, where each $H_i = \{ y \in \R^d \mid \langle \eta_i,y \rangle = c_i \}$ with $\eta_i$ are unit norm. We would like to get an expression for $d(u,W)$ in terms of the $\eta_i$ and $c_i$ quantities. In this case, $W^\perp = \Span(\eta_i)$, c.f. Lemma \ref{lem:affine-proj}. Letting $x := \Pi(u,W)$, by Lemma \ref{lem:affine-proj}, $u - x \in W^\perp$, so we can expand it in the $\eta_i$ basis. 
\[ u - x= \sum_{i}^m \langle \eta_i, u-x \rangle \eta_i = \sum_{i=1}^m \langle \eta_i, u \rangle \eta_i - \sum_{i=1}^m \langle \eta_i ,x \rangle \eta_i\]
Since $x \in W$, we get $\langle \eta_i, x\rangle = c_i$. Thus
\[ u-x = \sum_{i=1}^m (\langle \eta_i, u \rangle - c_i ) \eta_i \]
We now can compute the norm:
\begin{equation}\label{eq:delta-explicit}
d(u,W) = \lN u-x \rN_2 = \left\lN  \sum_{i=1}^m (\langle \eta_i, u \rangle - c_i ) \eta_i \right\rN_2\end{equation}


\begin{prop}\label{prop:grad1}
Let $P$ and $Z_0 \in \ZZ_n(\R^d)$ satisfy locality conditions 1) and 2) and let $U$ be the neighborhood of $Z_0$ obtained from Theorem \ref{thm:local}. Fix any $q \in \VERT(Z_0)$, set $e = \Lift(q)$, and assume that $W = \Aff(F_q)$ (c.f. Theorem \ref{thm:local}) is codimension one, so that we can write $W =\{ y \in \R^d \mid \langle \eta, y\rangle = c\}$. For $Z = Z(Q,\mu) \in U$, considering $\delta_q(Z) = d(\varphi_Z(q), W)$ as a function of the parameters $Q = (g_{ij})$ and $\mu = (\mu_j)$, for all $1 \leq i \leq n$ and $1 \leq j \leq d$ we have::
\[ \frac{\del \delta_q}{\del g_{ij}} (Z) =\eta_j e_i \]
\[ \frac{\del \delta_q}{\del \mu_j} (Z) = \eta_j\]
where $\mathds{1}(e_i=1)$ is the indicator function for when $e_i = 1$.
\end{prop}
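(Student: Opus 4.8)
The plan is to write both arguments of $\delta_q(Z) = d(\varphi_Z(q), W)$ explicitly in terms of the parameters $(Q,\mu)$, after which the gradient is simply the derivative of an absolute value of an affine function. First I would make the pushforward of the vertex $q$ explicit. Since $q \in \VERT(Z_0)$ and $Z_0$ is stable, its lift $e = \Lift(q)$ is a vertex of the cube, so $e_i \in \{0,1\}$ and $e_i = \mathds{1}(e_i = 1)$; moreover $e$ is a fixed point, independent of which $Z \in U$ we perturb to. By definition of the pushforward, $\varphi_Z(q) = A_Z(e) = Qe + \mu$, i.e.\ in coordinates $(\varphi_Z(q))_j = \sum_{i=1}^n g_{ij} e_i + \mu_j$. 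This is affine in the entries $g_{ij}$ and $\mu_j$, with $\partial (\varphi_Z(q))_j/\partial g_{kj} = e_k$ and $\partial(\varphi_Z(q))_j/\partial \mu_j = 1$ its only nonzero partials.

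Next I would make the point-to-$W$ distance explicit. Because $W$ is codimension one, formula (\ref{eq:delta-explicit}) with $m = 1$ and $\lVert \eta\rVert_2 = 1$ gives $d(u, W) = \lvert \langle \eta, u\rangle - c\rvert$ for every $u \in \R^d$, so
\[ \delta_q(Z) = \bigl\lvert \langle \eta, \varphi_Z(q)\rangle - c \bigr\rvert = \bigl\lvert L(Q,\mu) - c \bigr\rvert, \qquad L(Q,\mu) := \sum_{i,j} \eta_j e_i g_{ij} + \sum_j \eta_j \mu_j, \]
an absolute value of an affine function of the parameters. Under the locality conditions the relevant case is $\delta_q(Z_0) > 0$: a point of $\partial P$ is never Hausdorff stable relative to $P$, and $q \in P^\circ$ would make $\Aff(F_q) = \R^d$ rather than codimension one, so $q \notin P$ and $\delta_q(Z_0) = d(q, \Aff(F_q)) = d(q,P) > 0$. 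I would then fix the orientation of $\eta$ so that $L - c > 0$ at $Z_0$; shrinking $U$, continuity keeps $L - c > 0$ on all of $U$, so $\delta_q = L - c$ there. Being affine, this is differentiable, and reading off the partials gives $\partial \delta_q/\partial g_{ij} = \eta_j e_i$ and $\partial \delta_q/\partial \mu_j = \eta_j$, as claimed. (For the opposite orientation of $\eta$ the two formulae pick up an overall minus sign.)

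The computation itself is just the chain rule for the composition of the affine map $(Q,\mu) \mapsto \varphi_Z(q)$ with the affine functional $u \mapsto \langle \eta, u\rangle - c$, so I expect no real difficulty there. The one point that needs care is pinning down the sign of $L - c$, i.e.\ checking that the locality hypotheses (Hausdorff stability together with $W$ being codimension one) force $\delta_q(Z_0) > 0$, which is what makes $\delta_q$ genuinely differentiable --- rather than merely subdifferentiable --- in a neighborhood of $Z_0$ and lets us drop the absolute value.
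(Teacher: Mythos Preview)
Your proposal is correct and follows essentially the same approach as the paper: write $\varphi_Z(q) = Qe + \mu$, use the codimension-one formula $d(u,W) = \langle \eta, u\rangle - c$, and differentiate the resulting affine function of $(Q,\mu)$. The paper's proof is terser and simply asserts $d(\varphi_Z(q),W) = \langle \eta, \varphi_Z(q)\rangle - c$ without the absolute value; your additional argument that the locality hypotheses force $\delta_q(Z_0) > 0$ (so that a choice of orientation for $\eta$ removes the absolute value on all of $U$) makes explicit a point the paper leaves implicit, but the underlying computation is identical.
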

\begin{proof}
Since $W = \Aff(F_q)$ is codimension one, the distance between $\varphi_Z(q)$ and $W$ is $d(\varphi_Z(q),W) = \langle \eta, \varphi_Z(q) \rangle - c$. Since $\varphi_Z(q) = Qe + \mu = \sum_{i' \mid e_{i'} = 1} g_{i'}$, we can substitute that and differentiate with respect to $g_{ij}$ and with respect to $\mu_j$, noting that $\eta$ and $c$ are independent of $Q$ and $\mu$.
\end{proof}

When $W = \Aff(F_q)$ is higher dimension, we can obtain the gradients $\frac{\del \delta_q}{\del g_{ij}}$ and $\frac{\del \delta_q}{\del \mu_j}$ in a similar way by substituting $u = Qe + \mu$ into equation \ref{eq:delta-explicit} and differentiating. We omit this calculation for brevity.

\begin{cor}\label{cor:grad1}
The function $\delta_q(Z)$ is the square root of a bounded rational function.
\end{cor}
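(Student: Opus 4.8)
The plan is to read off the claim directly from the explicit distance formula (\ref{eq:delta-explicit}) once one observes that, for $q \in \VERT(Z_0)$, the affine subspace $W = \Aff(F_q)$ is \emph{fixed}: it is the affine hull of a face of the fixed polytope $P$, and by the locality conditions together with Remark \ref{rem:Hausdorff-stable} this face does not change as $Z$ ranges over the neighborhood $U$ produced by Theorem \ref{thm:local} (this is precisely the first bulleted statement in the proof of that theorem). Consequently the only $Z$-dependence in $\delta_q(Z) = d(\varphi_Z(q), W)$ enters through the single point $\varphi_Z(q)$, and that point depends on the parameters in the simplest possible way.

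Concretely, I would first write $\varphi_Z(q) = A_Z(e) = Qe + \mu$ with $e = \Lift(q) \in C^n$; in the coordinates $(g_{ij},\mu_j)$ of $(Q,\mu)$ this is an affine (indeed jointly linear) function. Then I would fix, once and for all, an orthonormal basis $\eta_1,\dots,\eta_m$ of $W^\perp$ and constants $c_i$ so that $W = \bigcap_{i=1}^m \{\, y \in \R^d : \langle \eta_i, y\rangle = c_i \,\}$; since $W$ is fixed, the $\eta_i$ and $c_i$ are constants. Substituting $u = \varphi_Z(q) = Qe+\mu$ into (\ref{eq:delta-explicit}) and using orthonormality of the $\eta_i$ gives
\[ \delta_q(Z)^2 \;=\; \sum_{i=1}^m \bigl( \langle \eta_i,\, Qe + \mu \rangle - c_i \bigr)^2 . \]
Each summand is the square of an affine function of $(Q,\mu)$ with constant coefficients, so $\delta_q(Z)^2$ is a polynomial of degree at most two in the parameters, hence in particular a rational function of them. (If one does not wish to orthonormalize, the same computation with the Gram matrix $G = (\langle \eta_i, \eta_j\rangle)_{ij}$ yields $\delta_q(Z)^2 = b^{\top} G^{-1} b$ with $b_i = \langle \eta_i, Qe+\mu\rangle - c_i$ and $G^{-1}$ a constant matrix, again a polynomial; there is likewise no case distinction needed when $W$ has higher codimension, as the display above already covers it.)

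Finally, boundedness is automatic: $\delta_q^2$ is a polynomial, hence continuous, and $U$ may be taken with compact closure, so $\delta_q^2$ is bounded on $U$; therefore $\delta_q = \sqrt{\delta_q^2}$ is the square root of a bounded rational function on $U$, as claimed. I do not expect a genuine obstacle here — the only point that needs care is the justification that $W = \Aff(F_q)$ stays constant on $U$, which is exactly what the locality conditions buy us via Theorem \ref{thm:local}. The word \emph{rational} (rather than \emph{polynomial}) is retained only so that this corollary reads uniformly with its companion Corollary \ref{cor:grad2}, where a true denominator appears because there the relevant affine subspace is the affine hull of a \emph{moving} face of $Z$.
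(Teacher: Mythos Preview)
Your proof is correct and follows essentially the same approach as the paper: substitute $u = Qe + \mu$ into the explicit distance formula (\ref{eq:delta-explicit}) and observe that, because $W = \Aff(F_q)$ is a fixed affine subspace of $P$, the result is the square root of a polynomial (hence rational) function of the parameters $(Q,\mu)$. You supply more detail than the paper does --- in particular an explicit justification that $W$ is constant on $U$, an orthonormalization step that cleans up the formula, and an argument for boundedness via compact closure of $U$ --- but the core idea is identical.
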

\begin{proof}
Note that $Qe + \mu$ is a component-wise polynomial function of the parameters $(Q,\mu)$. The substitution of $u = Qe + \mu$ into Equation \ref{eq:delta-explicit} introduces a square root.
\end{proof}

Calculating the gradient of $\delta^p$ is a bit more involved. In this case, the parameters $\eta_i, c_i$ in Equation \ref{eq:delta-explicit} are dependent on the variables $Q$ and $\mu$, and $u$ is a constant; whereas above it was the opposite. To simplify the algebra, just as above, we first consider the case where $W$ is codimension 1 space (i.e. $m = 1$). The following characterization of facets of a zonotope will be necessary to carry out this calculation.

\begin{lem}
Let $Z \in \ZZ_n(\R^d)$ be a general position zonotope and let $F \subset Z$ be a facet (codimension 1 face) with outward pointing normal $\eta$. Then there exist precisely $d-1$ generators $g_1,...,g_d$ of $Z$ such that $\langle \eta, g_i \rangle  =0$.
\end{lem}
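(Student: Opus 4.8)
The plan is to exploit the Minkowski-sum structure of $Z$ together with the general position hypothesis. After translating so that the base point is the origin (which changes neither the generators nor the outward normals of facets), write $Z = \sum_{i=1}^n S_i$ with $S_i = [0,g_i]$, and set $L(x) = \langle \eta, x\rangle$. Since $\eta$ is the outward normal of the facet $F$, the face $F$ is exactly the set of maximizers of $L$ on $Z$, and crucially $\dim F = d-1$.

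First I would partition $[n]$ into $J_+ = \{i : \langle \eta, g_i\rangle > 0\}$, $J_- = \{i : \langle \eta, g_i\rangle < 0\}$, and $J_0 = \{i : \langle \eta, g_i\rangle = 0\}$, and use the standard fact that the support face of a Minkowski sum in a fixed linear direction is the Minkowski sum of the support faces of the summands. For the segment $S_i$ this support face is $\{g_i\}$ when $i \in J_+$, $\{0\}$ when $i \in J_-$, and all of $S_i$ when $i \in J_0$; hence $F = c + \sum_{i \in J_0} S_i$ with $c = \sum_{i \in J_+} g_i$. In particular $F$ is a translate of the zonotope generated by $\{g_i : i \in J_0\}$, so $\dim F = \dim \Span\{g_i : i \in J_0\}$. (Alternatively, one can avoid quoting this fact by using Lemma \ref{lem:lp}: the cubical vertex $m = \sum_{i \in J_+} g_i$ uniquely maximizes $L$ among cubical vertices, hence is a vertex of $Z$, and identifying $\Lift(F)$ with the subcube $\prod_{i\in J_+}\{1\}\times\prod_{i\in J_0}[0,1]\times\prod_{i\in J_-}\{0\}$ gives the same description of $F$.)

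Then I would finish by a dimension count. Every $g_i$ with $i \in J_0$ lies in the hyperplane $\eta^\perp$, so $\dim\Span\{g_i : i \in J_0\} \le d-1$; combined with $\dim F = d-1$ this forces $\Span\{g_i : i \in J_0\} = \eta^\perp$, and hence $|J_0| \ge d-1$. Conversely, if $|J_0| \ge d$ then picking any $d$ of these generators gives, by general position (every subset of at most $d$ generators is linearly independent), a linearly independent set spanning $\R^d$; but all of them lie in the proper subspace $\eta^\perp$, a contradiction. Therefore $|J_0| = d-1$, i.e., exactly $d-1$ generators satisfy $\langle \eta, g_i\rangle = 0$.

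I do not expect a genuine obstacle here: the only ingredients are the splitting of support faces over Minkowski sums and the meaning of ``generators in general position,'' both routine. The one point to keep in mind is the equality $\dim F = d-1$ — this is precisely where the hypothesis that $F$ is a \emph{facet} (and not a lower-dimensional face) is used, and it is what pins the count down to exactly $d-1$ rather than merely $\le d-1$.
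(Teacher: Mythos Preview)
Your argument is correct. The partition of the generators into $J_+,J_-,J_0$, the identification of $F$ with a translate of the sub-zonotope $\sum_{i\in J_0}[0,g_i]$ via the support-face decomposition of a Minkowski sum, and the two-sided dimension count ($|J_0|\ge d-1$ from $\dim F=d-1$; $|J_0|\le d-1$ from general position) all go through cleanly.

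For comparison, the paper does not give its own proof of this lemma at all: it simply cites McMullen \cite{McM70}. So your self-contained argument is strictly more than what the paper provides, and is in fact the standard way this fact is established in the literature. The only thing worth tightening is to be explicit that your notion of ``general position'' (any $d$ generators are linearly independent) matches the paper's convention; the paper uses the phrase without a formal definition, but its subsequent uses are consistent with yours.
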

\begin{proof}
See \cite{McM70}.
\end{proof}

\begin{rem}
If $Z = Z(Q,\mu)$ and $F$ is a facet of $Z$, we denote $Q_F \in \R^{(d-1)\times d}$ to be the submatrix of $Q$ whose rows are the $d-1$ generators orthogonal to $\eta$. We denote $[Q_F]_{j}$ to be the determinant of the square submatrix of $Q_F$ obtained by omitting the $j$th column of $Q_F$.

\end{rem}

\begin{prop}\label{prop:grad2}
Let $P$ and $Z_0 \in \ZZ_n(\R^d)$ satisfy the locality conditions 1) and 2) and let $U$ be a neighborhood of $Z_0$ obtained from Theorem \ref{thm:local}. Assume that $p \in \VERT(P)$ is a vertex such that the projection of $p$ onto $Z_0$ lies on a facet (codimension one face) $F$ of $Z_0$. Additionally, fix some $v \in \VERT(Z_0) \cap F$, and write $e = \Lift(v)$. For $Z = Z(Q,\mu) \in U$, considering $\delta^p(Z)$ as a function of the parameters $Q = (g_{ij})$ and $\mu = (\mu_j)$, for all $1 \leq i \leq n$ and $1 \leq j \leq d$ we have:
\[ \frac{\del \delta^p}{\del g_{ij}}(Z) = - \eta_j e_i + \sum_{j'=1}^d \frac{\del \eta_{j'}}{\del g_{ij}} (p_{j'} - v_{j'}) \]
\[ \frac{\del \delta^p}{\del \mu_j}(Z) = - \eta_j  \]
where $\eta_{j} = \frac{(-1)^{j}\sigma}{\gamma}[Q_F]_{j}$, $\gamma = \sqrt{[Q_F]_1^2+ ... + [Q_F]_d^2}$ and $\sigma \in \{-1,1\}$ is an appropriate sign such that $\langle \eta, p - v \rangle> 0$. Moreover, this formula is independent of the choice of $v \in \VERT(Z_0) \cap F$.
\end{prop}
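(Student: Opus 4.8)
The plan is to reduce $\delta^p$ to an explicit scalar function of the parameters $(Q,\mu)$ and differentiate it directly. First I would exploit the codimension-one hypothesis: by Theorem \ref{thm:local} the pushforward $\varphi_Z$ carries the facet $F = F_p$ of $Z_0$ onto a facet $\varphi_Z(F)$ of $Z$ for every $Z \in U$, and, after shrinking $U$ if needed, $\Pi(p,Z)$ lies in the relative interior of $\varphi_Z(F)$, so $W := \Aff(\varphi_Z(F))$ is a codimension-one affine subspace whose unit normal depends only on $Q$ (translating a zonotope does not change its facet normals). Orient this normal $\eta=\eta(Q)$ so that $\langle\eta,p-v\rangle>0$ at $Z_0$; since $\langle\eta,p-\varphi_Z(v)\rangle=\pm d(p,W)$ is nonzero throughout $U$, the orientation is consistent on all of $U$. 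Writing $W=\{y:\langle\eta,y\rangle=c\}$ and using $\varphi_Z(v)\in\varphi_Z(F)\subset W$ to identify $c=\langle\eta,\varphi_Z(v)\rangle$, equation \ref{eq:delta-explicit} with $m=1$ gives
\[ \delta^p(Z) = \langle\eta,p\rangle - c = \langle\eta,\,p-\varphi_Z(v)\rangle. \]

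Next I would make $\eta$ explicit. Recall that exactly $d-1$ generators of a general position zonotope are parallel to any given facet; let $T$ be the index set of those generators for $F$ in $Z_0$. Because the combinatorial type of $Z$ is constant on $U$ (this is what Lemma \ref{lem:stable} and Theorem \ref{thm:local} provide), the lift of $F$ is a fixed face of $I^n$, so $\varphi_Z(F)$ is a translate of $\mathrm{span}\{g_i:i\in T\}$ for every $Z\in U$; in particular the submatrix $Q_F$ is cut out by the same index set $T$ for all $Z\in U$, so the formula below is well defined on $U$. Laplace expansion along the first row shows that the vector with $j$-th entry $(-1)^j[Q_F]_j$ is orthogonal to every row of $Q_F$ (appending such a row yields a $d\times d$ determinant with a repeated row), and it is nonzero since general position makes the $d-1$ generators linearly independent; normalizing by $\gamma=\sqrt{[Q_F]_1^2+\cdots+[Q_F]_d^2}$ and absorbing the overall sign into $\sigma\in\{-1,1\}$ to match the orientation above yields $\eta_j=\frac{(-1)^j\sigma}{\gamma}[Q_F]_j$. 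In particular $\eta$ is a differentiable function of $Q$ on $U$ and $\partial\eta_{j'}/\partial\mu_j=0$.

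Then I would substitute $\varphi_Z(v)=Qe+\mu=\sum_{i:e_i=1}g_i$ into $\delta^p(Z)=\langle\eta,p\rangle-\langle\eta,Qe+\mu\rangle$ and differentiate termwise. Since $\eta$ does not depend on $\mu$, $\partial\delta^p/\partial\mu_j=-\partial_{\mu_j}\langle\eta,Qe+\mu\rangle=-\eta_j$. For $g_{ij}$ the product rule gives
\[ \frac{\partial\delta^p}{\partial g_{ij}} = \sum_{j'=1}^d \frac{\partial\eta_{j'}}{\partial g_{ij}}\bigl(p_{j'}-\varphi_Z(v)_{j'}\bigr)\;-\;\sum_{j'=1}^d \eta_{j'}\,\frac{\partial (Qe+\mu)_{j'}}{\partial g_{ij}}, \]
and since $(Qe+\mu)_{j'}=\sum_k g_{kj'}e_k+\mu_{j'}$ the last sum equals $\eta_j e_i$; evaluating at $Z_0$, where $\varphi_{Z_0}(v)=v$, produces the stated formula (the remaining derivatives $\partial\eta_{j'}/\partial g_{ij}$ follow from the explicit determinant expression for $\eta$ by the quotient and cofactor rules, but the statement leaves them unexpanded). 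Finally, $v$-independence is immediate: $\delta^p(Z)=d(p,\Aff(\varphi_Z(F)))$ makes no reference to $v$, so neither can its gradient; equivalently, $c=\langle\eta,\varphi_Z(v)\rangle$ is the same for all $v\in\VERT(Z_0)\cap F$ because every such $\varphi_Z(v)$ lies on $W$, which forces the $v$-dependent contributions $-\eta_j e_i$ and $\sum_{j'}(\partial\eta_{j'}/\partial g_{ij})\,\varphi_Z(v)_{j'}$ to combine into $\partial c/\partial g_{ij}$, a quantity depending only on $Q$.

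The main obstacle is Step 2: justifying that a fixed set of $d-1$ generators spans $\varphi_Z(F)$ for every $Z\in U$ (so that $Q_F$, and hence the whole formula, is a genuine function on $U$ and not merely defined at $Z_0$), and pinning down the cofactor sign conventions so that $\bigl((-1)^j[Q_F]_j\bigr)_j$ is genuinely a facet normal. Once the normal is identified and its $Q$-dependence isolated, the rest is a routine differentiation of a bilinear expression together with a Laplace expansion, and the same argument also shows $\delta^p$ is a square root of a bounded rational function of $(Q,\mu)$.
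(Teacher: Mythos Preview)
Your proposal is correct and follows essentially the same route as the paper: write $\delta^p(Z)=\langle\eta,p\rangle-c$ with $c=\langle\eta,Qe+\mu\rangle$, differentiate via the product rule, and identify $\eta$ as the normalized cofactor vector of $Q_F$. Your treatment is in fact more careful on two points the paper glosses over---you justify that the index set cutting out $Q_F$ is constant on $U$, and you distinguish $\varphi_Z(v)$ from $v$ before specializing to $Z_0$---but these are expository refinements of the same argument rather than a different approach.
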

\begin{proof}
Since $F$ is a facet, $\delta^p$ is given by:
\[ \delta^p(Z) = \langle \eta, p \rangle - c \]
where $\eta$ is the unique unit normal to $F$ satisfying $\langle \eta, p-v \rangle > 0$, and $c$ is an appropriate scalar. Note that both $\eta$ and $c$ are functions of $Z$ (i.e. of $Q$ and $\mu$). Therefore:
\begin{equation}\label{eq:pf:eta}
\frac{\del \delta^p}{\del g_{ij}}(Z) = \sum_{j'=1}^d p_{j'} \frac{\del \eta_{j'}}{\del g_{ij}} - \frac{\del c}{\del g_{ij}}
\end{equation}
We note that $c = \langle \eta, v \rangle$ because $v \in F$. Since $v = Qe + \mu$, we can substitute and differentiate:
\[ \frac{\del c}{\del g_{ij}} = \frac{\del}{\del g_{ij}} \langle \eta, Qe + \mu \rangle = \sum_{j'=1}^d \frac{\del \eta_{j'}}{\del g_{ij}} v_{j'} + \eta_{j'} e_i \mathds{1}(j=j') \]
where $\mathds{1}(j=j') = 1$ if $j=j'$ and $0$ otherwise. Simplifying, we find:
\[ \frac{\del c}{\del g_{ij}} = -\eta_j e_i + \sum_{j'=1}^d \frac{\del \eta_{j'}}{\del g_{ij}} v_j \]
Putting this back into (\ref{eq:pf:eta}), we get the first claimed expression. Doing a similar differentiation process, but with respect to $\mu_j$ and noting that $\eta$ is independent of $\mu$, we get the second claimed expression.

We now must show that $\eta$ satisfies $\eta_j = \frac{(-1)^j\sigma}{\gamma} [Q_F]_j$. We note that any normal vector $\tilde{\eta}$ to $F$ satisfies:
\[ Q_F\tilde{\eta}  = 0 \]
The matrix $Q_F \in \R^{(d-1)\times d}$ is full rank and therefore has one-dimensional kernel generated by any such normal vector. Gaussian elimination on the system above gives $\tilde{\eta} = ( (-1)[Q_F]_1 ,...,(-1)^j[Q_F]_j,..., (-1)^d[Q_F]_d)$. Then choosing $\sigma \in \{1, -1\}$ so that $\langle \sigma \tilde{\eta}, p - v \rangle > 0$ and normalizing $\tilde{\eta}$, we get $\eta$ as we defined it.

Finally, we must show that this was independent of our choice of $v \in \VERT(Z_0) \cap F$. Recall in (\ref{eq:pf:eta}) that only dependence on $v$ is in the term $\frac{\del c}{\del g_{ij}}$, where $c = \langle \eta, v \rangle$. However, $c$ is invariant under our choice of $v$ because every $v \in F$ satisfies $\langle \eta, v \rangle = c$.
\end{proof}

The general case when $W$ is higher codimension can be performed by differentiating Equation \ref{eq:delta-explicit} and using the chain rule. We have already established how to differentiate the inside of the summation, since each of those is a codimension 1 case.

\begin{cor}\label{cor:grad2}
With the same setup as above, the function $\delta^p(Z)$ is the square root of a bounded rational function.
\end{cor}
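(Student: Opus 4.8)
The plan is to run the computation of Corollary~\ref{cor:grad1} with the roles of the fixed point and the affine subspace interchanged: using the explicit minor description of facet normals from Proposition~\ref{prop:grad2}, I will check that every ingredient of \eqref{eq:delta-explicit} is already a rational function of the parameters $(Q,\mu)$, so that squaring the Euclidean norm is the only operation needed to clear the radicals. I treat the general codimension $m\geq 1$ at once; the codimension-one case is exactly the hypothesis of Proposition~\ref{prop:grad2}, and, as noted in the discussion following it, the higher-codimension case reduces to that via \eqref{eq:delta-explicit}.

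By Theorem~\ref{thm:local}, for $Z = Z(Q,\mu)\in U$ the smallest face of $Z$ containing $\Pi(p,Z)$ is $\varphi_Z(F_p)$, and $W = \Aff(\varphi_Z(F_p))$ is the intersection $\bigcap_{i=1}^{m} H_i$ of the hyperplanes spanned by the facets of $Z$ that contain this face (if $W = \R^d$, i.e.\ $p$ lies in the interior of $Z$, then $\delta^p\equiv 0$ and there is nothing to prove, so assume $m\geq 1$). Pick a vertex $v_0\in\VERT(Z_0)\cap F_p$ and set $e = \Lift(v_0)\in C^n$; then $\varphi_Z(v_0) = Qe+\mu$ is a vertex of $\varphi_Z(F_p)$ lying in every $H_i$, so writing $H_i = \{\, y : \langle \eta_i, y\rangle = c_i\,\}$ with $\lN\eta_i\rN_2 = 1$ we may take $c_i = \langle \eta_i, Qe+\mu\rangle$. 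By Proposition~\ref{prop:grad2}, the $j$-th coordinate of $\eta_i$ is $(-1)^j\sigma_i\gamma_i^{-1}[Q_{F_i}]_j$, where $[Q_{F_i}]_j$ is a $(d-1)\times(d-1)$ minor of $Q$ --- a polynomial in the entries of $Q$ --- and $\gamma_i = \bigl([Q_{F_i}]_1^2 + \cdots + [Q_{F_i}]_d^2\bigr)^{1/2}$. Hence $\gamma_i^2$ is a polynomial in $Q$, the inner product $\langle\eta_i,\eta_k\rangle = \sigma_i\sigma_k(\gamma_i\gamma_k)^{-1}\sum_j[Q_{F_i}]_j[Q_{F_k}]_j$ is a rational function of $Q$ with denominator $\gamma_i\gamma_k$, and $\langle\eta_i,p\rangle - c_i = \langle\eta_i, p - Qe - \mu\rangle$ is a rational function of $(Q,\mu)$ with denominator $\gamma_i$. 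Substituting into \eqref{eq:delta-explicit} and squaring,
\[
\delta^p(Z)^2 \;=\; \lN \sum_{i=1}^{m}(\langle\eta_i,p\rangle - c_i)\,\eta_i \rN_2^2 \;=\; \sum_{i=1}^{m}\sum_{k=1}^{m}(\langle\eta_i,p\rangle - c_i)(\langle\eta_k,p\rangle - c_k)\,\langle\eta_i,\eta_k\rangle ,
\]
which is a rational function of $(Q,\mu)$ whose denominator is a product of the $\gamma_i$ and $\gamma_i^2$; in particular it has no square roots.

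It remains to argue boundedness on a small enough neighborhood. Since $Z_0$ is a general position zonotope, each $Q_{F_i}$ evaluated at $Z_0$ is full rank (as recorded in the proof of Proposition~\ref{prop:grad2}), so $\gamma_i(Z_0) > 0$; as there are only finitely many $i$ and each $\gamma_i$ is continuous, after shrinking $U$ I may assume $\overline{U}$ is compact and each $\gamma_i$ is bounded away from $0$ on $U$. The numerator of $\delta^p(Z)^2$ is a polynomial, hence bounded on $\overline{U}$, so $\delta^p(Z)^2$ is a bounded rational function on $U$; shrinking $U$ once more so that $\delta^p > 0$ there (possible since $\delta^p(Z_0) = d_P(Z_0) > 0$ in the case $m\geq 1$) lets us take its square root, giving the claim.

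The only genuine point of care is the uniform lower bound on the denominators $\gamma_i$ over $U$: this is where the general-position hypothesis on $Z_0$ is used, together with the freedom --- already built into Theorem~\ref{thm:local} --- to pass to a smaller neighborhood. Everything else is bookkeeping: reading off from Proposition~\ref{prop:grad2} that each factor in \eqref{eq:delta-explicit} is rational in $(Q,\mu)$, and observing that squaring the Euclidean norm removes the remaining radicals.
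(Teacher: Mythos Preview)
Your argument follows the same line as the paper's: read off from Proposition~\ref{prop:grad2} that $\eta_j = (-1)^j\sigma\gamma^{-1}[Q_F]_j$ with $\gamma^2$ polynomial and nonvanishing by the general-position assumption, and conclude that $\delta^p(Z)^2$ is rational with nonvanishing denominator. The paper's proof is a two-line sketch of exactly this; your write-up simply fills in the algebra, and additionally pushes through the higher-codimension case via \eqref{eq:delta-explicit}, which the paper only gestures at.

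One minor slip: in your last paragraph you write $\delta^p(Z_0) = d_P(Z_0)$, but $\delta^p(Z_0) = d(p,\Aff(F_p))$ is merely one of the terms appearing in the maximum \eqref{eq:localdp}, not the maximum itself; in particular it can vanish (e.g.\ if $p\in\partial Z_0$) even when $m\geq 1$. This does not affect your conclusion, since taking the square root only requires $\delta^p(Z)^2\geq 0$, which is automatic. You can simply drop the final ``shrinking $U$ once more so that $\delta^p>0$'' clause.
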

\begin{proof}
This follows from our formula for $\eta_j$ in the proof above, which is the square root of a rational function of the parameters of $Q$. The value $\gamma = \sqrt{[Q_F]_1^2 + ... + [Q_F]_d^2}$ never vanishes because $Q$ is full rank by assumption.
\end{proof}

\begin{rem}\label{rem:hashtable}
Computationally, the quantities $\eta_j$ are the most expensive part of calculating the gradient of $\delta^p(Z)$ because they involve determinants. However, note that the determinental quantities $[Q_F]_j$ are minors of a matrix whose entries are all formal variables $g_{ij}$. This means that we can store all possible such minors ahead of time in a hashtable (lookup table) and then $\eta_j$ can be computed by evaluating these at the current values of $g_{ij}$. This hashtable will have ${ n \choose d-1}\cdot d$ entries since each $Q_F$ corresponds to a $d-1$ sized subset of $\{1,...,n\}$ and there are $d$ possible minors for $Q_F$.
\end{rem}


\subsection{Step sizes}
To choose an appropriate step size $h$, recall the perturbation limit $\epsilon$ from Proposition \ref{prop:optim-cone}. This was the minimum of two quantities, $\tau$ and $\tau'$. The former is explicitly given by (\ref{eq:pf:tau}):
\[ \tau = \min_i (\tau_i, \epsilon_0) \]
This represents the maximum step size that one can take to ensure that all terms achieving the maximum of $d_P(Z)$ decrease. While any step size $t < \tau$ leads to a decrease in the Hausdorff distance, the optimum step size is not necessarily $\tau$. This can be seen in Figure \ref{fig:pf:tau}, where it is clear that the optimal value of $t$ that minimizes $\lN \varphi_{Z_t}(q_i) - p_i\rN$ is $\frac{\tau_i}{2}$

With this in mind, we define three step size rules:

\begin{enumerate}
\item {\bf Conservative step:} Set $h_k = \frac{1}{2}\min_i(\tau_i)$. This ensures that $d_P(Z)$ decreases strictly.
\item {\bf Random step:} Set $h_k = \frac{1}{2} \text{RandomChoice}(\{\tau_1,...\tau_k\})$.
\item {\bf Aggressive step:} Set $h_k = \frac{1}{2} \max_i(\tau_i)$. This ensures that at least one of the pairs achieving the Hausdorff distance decrease in distance.
\end{enumerate}

The conservative step rule has the advantage of ensuring that $d_P(Z)$ decreases; however, it has the potential to be a very small step size, which can hinder convergence and cannot explore very far (recall Remark \ref{rem:perturb}). Conversely, the aggressive step gives a larger step size and better exploration, but might not always lead to a decrease in the Hausdorff distance. In between these is the random step method, which is a compromise. In practice, it is likely best to use a mixture of these step sizes, such as starting with aggressive steps at the beginning to explore the landscape and then switching to conservative steps after an amount of time to more reliably converge to a local minimum.

\subsection{The algorithm}

Our proposed algorithm for subgradient descent on $d_P$ is shown in Algorithm \ref{alg:subgradient}, which uses the random choice step size described above. We have also included a few practical considerations. First, recall that our subgradient calculus was only valid for zonotopes satisfying the locality conditions 1) and 2); if in the iteration loop $Z$ does not satisfy these, since these are generic conditions, we can perturb it by a small random quantity and it will satisfy them almost surely. Additionally, through experiments, we found it highly effective to start with a good initial guess (warmstart) for $Z$ rather than a randomly initialized starting point.

A good technique for warmstarting is to compute an approximate minimal enveloping zonotope $\overline{Z}$ of $P$. An enveloping zonotope of $P$ is a zonotope that entirely contains $P$. Work by \cite{GNZ03} showed how to compute an approximate minimal enveloping zonotope for any $n$ and $d$. For example, in the case of $d=2$, for any point $O \in \R^2$ they define:
\[ \overline{Z}_O = \Conv\left( \bigcup_{v \in \VERT(P)} \{ v, \REF_{O}(v)\} \right) \]
where $\REF_{O}$ is reflection about the point $O$. This is a centrally symmetric polygon containing $P$, and thus a zonotope. The authors show that the area of $\overline{Z}_O$ is a piecewise convex affine function whose global minimum can be found through a binary search in $\O(n^2\log n)$. For $d > 2$, they use a different technique based on solving a linear program, since $\overline{Z}_{O}$ isn't necessarily a zonotope in this case. They show that, given a tolerance $\epsilon > 0$, the enclosing zonotope whose sum of generators is within $\O(\epsilon)$ of the optimum can be computed in $\O(n \epsilon^{-(d-1)^2} + \epsilon^{-\O(d^2)})$.


\begin{algorithm}
\caption{Subgradient Method with Random Choice Stepping}
\label{alg:subgradient}
\begin{algorithmic}[1]
\Require A polytope $P \subset \R^d$, a threshold $\epsilon \geq 0$, an integer $n$, and a maximum number of steps $N$.
\Ensure A zonotope $Z$ of rank $n$ with small $d_P(Z)$.
\Procedure{OptimizeHausdorff}{$P,n,\epsilon,N$}
\State $Z \gets $ \textsc{Warmstart}$(P)$
\While{$d_P(Z) > \epsilon$ and number of iterations $\leq N$}
\While{$Z$ and $P$ do not satisfy the locality conditions 1) and 2)}
\State $Z \gets Z + \Delta Z$ with $\Delta Z \in \ZZ_n(\R^d)$ small and random.
\EndWhile
\State $(Q,\mu) \gets $ representative of $Z$ in $\widetilde{\ZZ_n}(\R^d)$
\If{$\FF(P,Z) = \emptyset$}
\State return $Z$
\EndIf
\State $(\Delta Q, \Delta \mu) \gets \overline{\FF(P,Z)}_{\text{Ch}}$
\State $\tau_i \gets$ Equation \ref{eq:pf:tau}
\State $h \gets \frac{1}{2} \text{RandomChoice}(\{\tau_i\})$
\State $Z \gets Z(Q+ h\Delta Q, \mu + h\Delta \mu)$
\EndWhile
\State return $Z$
\EndProcedure
\end{algorithmic}
\end{algorithm}

\section{Complexity Analysis and Numerical Results}
\label{sec:numerical}

The step-wise cost of Algorithm \ref{alg:subgradient} lies in the computation of the feasible subdifferential $\overline{\FF(P,Z)_{\text{Ch}}}$ and the Hausdorff distance $d_P$. Recall that since $d_P$ is differentiable almost everywhere, the feasible subdifferential will generically be a single point, given by one of the gradients computed in Propositions \ref{prop:grad1} and \ref{prop:grad2}. Therefore the average case complexity for a single step of Algorithm \ref{alg:subgradient} is determined by the complexity of computing the Hausdorff distance $d_P(Z)$ and the complexity of computing the gradient. The former turns out to be the dominating term if you are willing to precompute a potentially large hashtable:

\begin{prop}
Fix a polytope $P \subset \R^d$ and a zonotope rank $n\geq 1$. Then given a space complexity budget of $\O\left( d { n \choose d-1 } \right)$, the average case time complexity of a single step of Algorithm \ref{alg:subgradient} is the same as the time complexity of evaluating the Hausdorff distance between $P$ and a general position zonotope $Z \in \ZZ_n(\R^d)$.
\end{prop}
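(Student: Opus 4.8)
The plan is to prove the equality of complexities by a two-sided accounting argument. Fix $P$ and $d$ once and for all, so that every quantity depending only on $|\VERT(P)|$ and $d$ is an absolute constant; since $d=1$ is trivial, assume $d\ge 2$. Write $T_{d_P}(n)$ for the cost of evaluating $d_P$ on a general position zonotope of rank $n$, and note $T_{d_P}(n)=\Omega(n)$, since one must at least read the generator matrix $Q\in\R^{n\times d}$. For the lower bound, each pass of the while loop of Algorithm~\ref{alg:subgradient} evaluates the guard $d_P(Z)>\epsilon$, and $\FF(P,Z)$ is assembled from the pairs $(p_i,q_i)$ attaining $d_P(Z)$; so one step costs at least $T_{d_P}(n)$. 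The rest of the proof is the matching upper bound: in the average case a step costs at most $T_{d_P}(n)+\O(n)$.

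The first observation is that \emph{genericity collapses the polyhedral machinery}. Since $d_P$ is differentiable almost everywhere (the corollary to Theorem~\ref{thm:local}), for $Z$ outside a measure-zero set $d_P(Z)$ is attained at a single pair $(p_1,q_1)$, hence $\del_C d_P(Z)=\{g\}$ is a single vector, equal by the corollary to Theorem~\ref{thm:local} to $\nabla\delta_{q_1}$ or $\nabla\delta^{p_1}$. Then $\mathcal{C}(P,Z)$ is a single halfspace, so it has nonempty interior, and $\FF(P,Z)=\{g\}\cap\mathcal{C}(P,Z)^\circ$ is either the singleton $\{g\}$ or empty; which of the two occurs is decided by the sign of the single inner product $\langle g,(p_1-q_1)\otimes(e_1,1)\rangle$, i.e. $\O(nd)=\O(n)$ arithmetic. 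Thus in the generic case no v-/h-representation conversion and no Chebyshev-center linear program is ever run: the algorithm either returns (if $\FF=\emptyset$) or sets $(\Delta Q,\Delta\mu)=g$, after which the $\tau_i$ of (\ref{eq:pf:tau}) and the update $Z\gets Z(Q+h\Delta Q,\mu+h\Delta\mu)$ require only the matrix--vector product $\Delta Q e_1$ and a few inner products, i.e. $\O(n)$ arithmetic. The locality-condition check of the inner loop I would fold into the evaluation of $d_P(Z)$: loss of general position of $Z$ surfaces as a degenerate face lattice or a vanishing normalizer $\gamma$, and loss of Hausdorff stability surfaces when some $\Pi(p,Z)$ or $\Pi(q,P)$ lands on the skeleton of the relevant normal complex; generically neither occurs, and the attendant bookkeeping---recording each face $F_p,F_q$, and when $F_p$ is a facet of $Z$ a vertex $v\in F_p$ with its lift $\Lift(v)$, the latter already produced by the vertex enumeration performed while computing $d_P(Z)$---is $\O(n)$.

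What remains is to compute the single gradient $g$ in $\O(n)$ time, and this is where the hashtable of Remark~\ref{rem:hashtable} is used. If $g=\nabla\delta_{q_1}$ with $q_1\in\VERT(Z_0)$, the data $\eta_i,c_i$ entering (\ref{eq:delta-explicit}) are facet normals of the fixed polytope $P$ and are independent of $Z$, so Proposition~\ref{prop:grad1} (and its higher-codimension analogue) yields all $nd+d$ components of $g$ in $\O(nd)=\O(n)$ arithmetic. If $g=\nabla\delta^{p_1}$ with $p_1\in\VERT(P)$ projecting onto a facet $F$ of $Z$, Proposition~\ref{prop:grad2} expresses each component through $\eta$ and the partials $\del\eta_{j'}/\del g_{ij}$, which are built from the minors $[Q_F]_1,\dots,[Q_F]_d$ and $\gamma=\sqrt{[Q_F]_1^2+\dots+[Q_F]_d^2}$; these $d$ minors are fetched by one lookup keyed on the $(d-1)$-subset of $[n]$ indexing $F$, from a table with exactly $d\binom{n}{d-1}$ entries, each of size $\O(1)$ for fixed $d$, which is the stated space budget, and each is evaluated at the current $g_{ij}$ in $\O(1)$ time (a fixed polynomial with $\O(1)$ monomials). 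The derivatives $\del[Q_F]_j/\del g_{ij}$ are $\pm$ minors of size $d-2$, also $\O(1)$ to evaluate, and are nonzero only for the $d-1$ row-indices of $Q_F$, so they cost $\O(1)$ more; the term $-\eta_j e_i$ contributes $\O(nd)=\O(n)$ components; and when $\Pi(p_1,Z)$ lies on a face of codimension $m>1$ one applies this reduction, via the chain rule on (\ref{eq:delta-explicit}), to each of the $\O(1)$ facets of $Z$ through that face. Hence $g$ is obtained in $\O(n)$ time.

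Combining the two bounds gives a step cost of $T_{d_P}(n)+\O(n)=\Theta(T_{d_P}(n))$ in the average case, as claimed. I expect the third paragraph to be the main obstacle: one must check carefully that the precomputed table of $(d-1)\times(d-1)$ minors, supplemented by their size-$(d-2)$ derivative minors and the chain-rule handling of higher-codimension projections, is exactly what is needed to emit every component of $g$ in $\O(n)$ time while still fitting inside $\O(d\binom{n}{d-1})$ space. The other steps are routine once one sees that genericity forces $\FF(P,Z)$ to be a point or empty---removing every polyhedral sub-computation---and that, for fixed $P$ and $d$, all residual $\O(n)$ overhead is absorbed by $T_{d_P}(n)=\Omega(n)$.
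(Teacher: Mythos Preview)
Your argument is correct and follows the same skeleton as the paper's own proof: precompute the table of formal minors from Remark~\ref{rem:hashtable} so that gradient components become $\O(1)$ lookups-plus-evaluations, observe that generically $\del_C d_P(Z)$ is a singleton so no Chebyshev LP is needed, and conclude that the Hausdorff evaluation dominates. Your version is considerably more careful than the paper's brief sketch---you explicitly run the two-sided bound, track the $\O(n)$ cost of writing out all $nd+d$ gradient components (the paper's ``$\O(d)$'' is best read as per-component), handle the $\FF(P,Z)=\emptyset$ branch and the locality-condition check, and spell out the size-$(d-2)$ derivative minors and the higher-codimension chain rule---but the underlying mechanism is identical.
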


\begin{proof}
We can store all possible formal expressions for $\eta_j$ and $\frac{\del \eta_{j'}}{\del_{g_{ij}}}$ using the given storage budget (see Remark \ref{rem:hashtable}), and therefore the evaluation of the gradients of $\delta^p$ and $\delta_q$ can be performed in the time it takes to evaluate a rational function of degree $d-1$ at a point, which is $\O(d)$. Thus the computation of the feasible differential is generically $\O(d)$. The complexity of evaluating the Hausdorff distance $d_P(Z)$ is higher than $\O(d)$ because it involves solving a series of quadratic programs (see \cite{Kon14}). Thus the dominating term is the Hausdorff distance.
\end{proof}

We have implemented Algorithm \ref{alg:subgradient} in \texttt{python}, and that code can be found here:
\[\text{\url{https://github.com/geodavic/zonopt}.}\]
This implementation uses the \texttt{autograd} framework of \texttt{pytorch} to get the subgradients, and thus avoids the space complexity shown in the above Proposition. In practice this implementation is simpler than symbolic gradients and still performant.


In Figure \ref{fig:loss}, we see the result of some experiments where we generate a random polytope $P$ and report the distance $d_P(Z)$ as it is optimized. To illustrate the usefulness of warmstarting, we compared using randomly initialized zonotopes versus the warmstart method of computing an approximate enveloping zonotope of $P$. In addition, see Figure \ref{fig:eight-examples} for some example optimal zonotopes in dimension 2.


\begin{figure}
\begin{center}
\begin{tikzpicture}[scale=0.8725]
\node at (0,0) {\includegraphics[width=0.7\textwidth]{"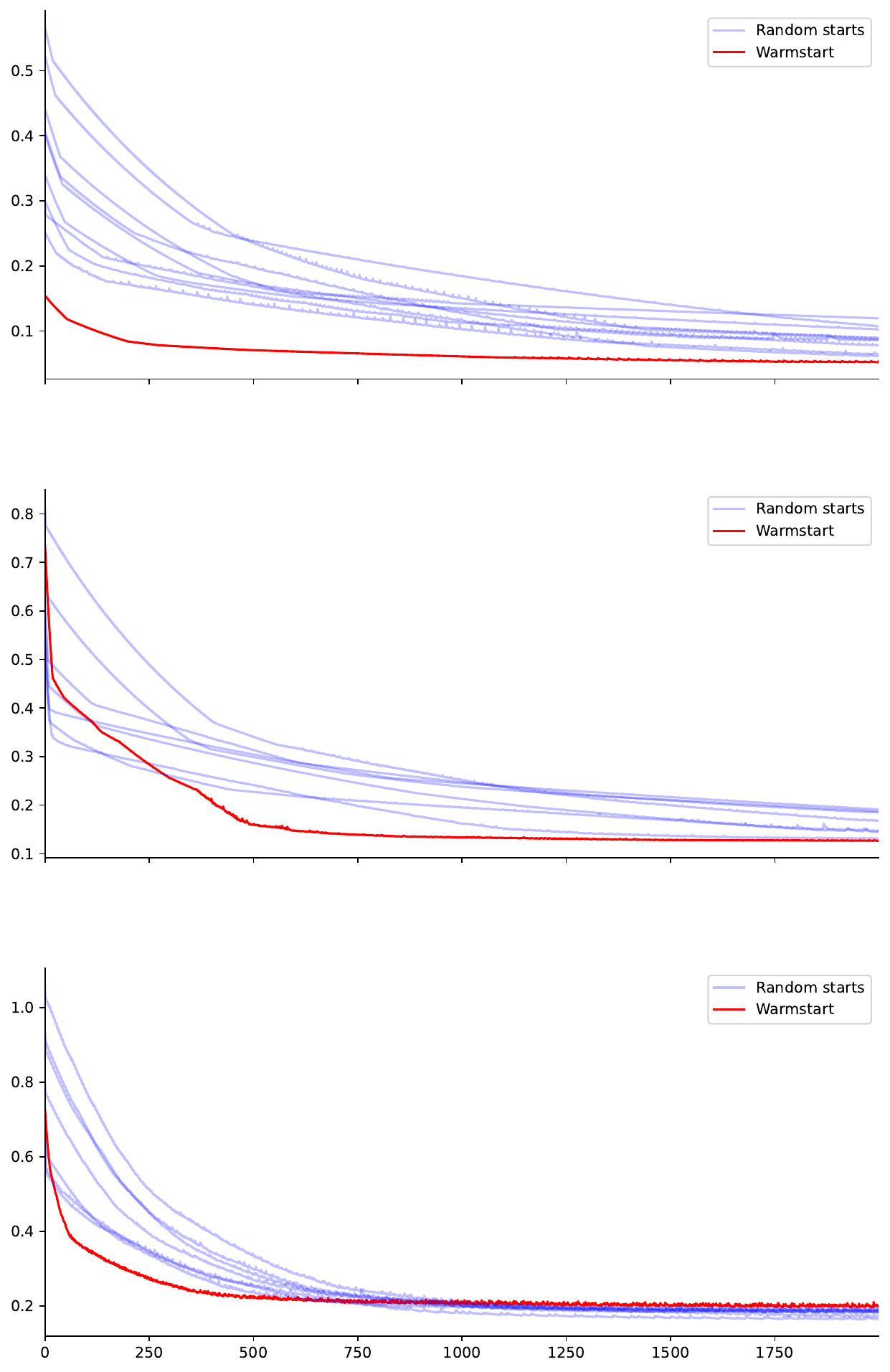"}};
\node at (0,8.8) {\small{$\dim(P) = 2, \  \rank(Z) = 4$}};
\node at (0,2.8) {\small{$\dim(P) = 3, \  \rank(Z) = 5$}};
\node at (0,-3.2) {\small{$\dim(P) = 4, \  \rank(Z) = 6$}};
\node[anchor=north] at (0,-9.4) {\small{Number of iterations}};
\node[rotate=90] at (-6.4,-5.8) {\small{$d_P(Z)$}};
\node at (6,-5.8) {$\quad$};
\node[rotate=90] at (-6.4,-5.8+6) {\small{$d_P(Z)$}};
\node at (6,-5.8+6) {$\quad$};
\node[rotate=90] at (-6.4,-5.8+12) {\small{$d_P(Z)$}};
\node at (6,-5.8+12) {$\quad$};
\end{tikzpicture}
\end{center}
\caption{Using Algorithm \ref{alg:subgradient} to approximate a random polytope $P$ by a zonotope $Z$ in various dimensions. The curves represent the Hausdorff distance $d_P(Z)$ at each step. The red curve corresponds to using a warmstarted guess for $Z$, and the rest correspond to random initializations of $Z$.}
\label{fig:loss}
\end{figure}

\begin{figure}
\begin{center}
\begin{tikzpicture}

\draw[thick,blue] (2.069361681022755, 0.3257679381477062) -- (3.0292995820151747, 2.940419925848613) -- (2.586469506592187, 3.1432990696186245) -- (0.30335567522522655, 2.6825343822841288) -- (0.9356551478793855, 0.36328597586710126) -- cycle;

\draw[thick,red] (0.48850021461868376, 0.8864197770002206) -- (0.6440321456221483, 0.3228069891923682) -- (1.781217336342005, 0.47700798034757586) -- (2.673877764127658, 1.120117137986416) -- (2.7624725735793207, 2.807449771467871) -- (2.6069406425758563, 3.3710625592757233) -- (1.4697554518559994, 3.2168615681205157) -- (0.5770950240703466, 2.5737524104816756) -- cycle;

\draw[opacity=0.8,gray,thick] (2.069361681022755, 0.3257679381477062) -- (1.8991769585220086, 0.5619909267113569);
\draw[fill=black] (2.069361681022755, 0.3257679381477062) circle (1.5pt);
\draw[fill=black] (1.8991769585220086, 0.5619909267113569) circle (1.5pt);
\draw[opacity=0.8,gray,thick] (3.0292995820151747, 2.940419925848613) -- (2.7472566948355706, 2.8625886987761153);
\draw[fill=black] (3.0292995820151747, 2.940419925848613) circle (1.5pt);
\draw[fill=black] (2.7472566948355706, 2.8625886987761153) circle (1.5pt);
\draw[opacity=0.8,gray,thick] (0.30335567522522655, 2.6825343822841288) -- (0.5770950240703465, 2.5737524104816747);
\draw[fill=black] (0.30335567522522655, 2.6825343822841288) circle (1.5pt);
\draw[fill=black] (0.5770950240703465, 2.5737524104816747) circle (1.5pt);
\draw[opacity=0.8,gray,thick] (0.771963368486245, 0.9637006763689252) -- (0.48850021461868376, 0.8864197770002206);
\draw[fill=black] (0.771963368486245, 0.9637006763689252) circle (1.5pt);
\draw[fill=black] (0.48850021461868376, 0.8864197770002206) circle (1.5pt);
\draw[opacity=0.8,gray,thick] (0.9257514171972658, 0.39961245184164634) -- (0.6440321456221483, 0.3228069891923682);
\draw[fill=black] (0.9257514171972658, 0.39961245184164634) circle (1.5pt);
\draw[fill=black] (0.6440321456221483, 0.3228069891923682) circle (1.5pt);
\draw[opacity=0.8,gray,thick] (2.3981612903833023, 1.2213431115844997) -- (2.673877764127658, 1.120117137986416);
\draw[fill=black] (2.3981612903833023, 1.2213431115844997) circle (1.5pt);
\draw[fill=black] (2.673877764127658, 1.120117137986416) circle (1.5pt);
\draw[opacity=0.8,gray,thick] (1.5277230402762463, 2.9296290467680826) -- (1.4697554518559994, 3.2168615681205157);
\draw[fill=black] (1.5277230402762463, 2.9296290467680826) circle (1.5pt);
\draw[fill=black] (1.4697554518559994, 3.2168615681205157) circle (1.5pt);

\draw[thick,blue] (6.624111007838177, 3.1738427500474855) -- (6.07638284968393, 0.7121467007557514) -- (6.52346945948608, 0.15177297899840353) -- (9.228066257193067, 0.7794979281891886) -- (9.124402587309786, 1.7037920749506217) -- (8.881512072837118, 2.679392258307101) -- (7.603303001509191, 3.2878939726186216) -- cycle;

\draw[thick,red] (6.096365610731893, 1.3465016594940995) -- (6.202723314428924, 0.504622868580554) -- (7.387288836416293, 0.10803741976219167) -- (8.957273286214152, 0.471552478388929) -- (9.175567702081288, 1.9813374030108464) -- (9.069209998384256, 2.8232161939243916) -- (7.884644476396888, 3.219801642742754) -- (6.314660026599029, 2.856286584116017) -- cycle;

\draw[opacity=0.8,gray,thick] (6.624111007838177, 3.1738427500474855) -- (6.678151011964221, 2.94044948577554);
\draw[fill=black] (6.624111007838177, 3.1738427500474855) circle (1.5pt);
\draw[fill=black] (6.678151011964221, 2.94044948577554) circle (1.5pt);
\draw[opacity=0.8,gray,thick] (6.52346945948608, 0.15177297899840353) -- (6.597367046070866, 0.37249850860931893);
\draw[fill=black] (6.52346945948608, 0.15177297899840353) circle (1.5pt);
\draw[fill=black] (6.597367046070866, 0.37249850860931893) circle (1.5pt);
\draw[opacity=0.8,gray,thick] (7.3335028401356475, 0.3397781423344645) -- (7.387288836416293, 0.10803741976219167);
\draw[fill=black] (7.3335028401356475, 0.3397781423344645) circle (1.5pt);
\draw[fill=black] (7.387288836416293, 0.10803741976219167) circle (1.5pt);
\draw[opacity=0.8,gray,thick] (8.903295383005549, 0.7041200453673023) -- (8.957273286214152, 0.471552478388929);
\draw[fill=black] (8.903295383005549, 0.7041200453673023) circle (1.5pt);
\draw[fill=black] (8.957273286214152, 0.471552478388929) circle (1.5pt);
\draw[opacity=0.8,gray,thick] (8.881512072837118, 2.6793922583071006) -- (9.069209998384256, 2.8232161939243916);
\draw[fill=black] (8.881512072837118, 2.6793922583071006) circle (1.5pt);
\draw[fill=black] (9.069209998384256, 2.8232161939243916) circle (1.5pt);
\draw[opacity=0.8,gray,thick] (6.542190449019707, 2.8056609923690616) -- (6.314660026599029, 2.856286584116017);
\draw[fill=black] (6.542190449019707, 2.8056609923690616) circle (1.5pt);
\draw[fill=black] (6.314660026599029, 2.856286584116017) circle (1.5pt);

\draw[thick,blue] (0.3337576722656526, 4.2629821140984) -- (3.408071654497921, 4.747604427638509) -- (3.284720281839473, 6.035700624955344) -- (2.2773889491281114, 6.9069167074647035) -- (0.7552468047730609, 6.924969896989102) -- cycle;

\draw[thick,red] (0.3950402428022957, 6.481361871794583) -- (0.2677842231423603, 5.147474659968113) -- (0.5208321238412212, 4.482113940659169) -- (2.101430374993268, 4.250483702950762) -- (3.122283914739602, 4.665073592173341) -- (3.2495399343995377, 5.998960803999811) -- (2.9964920337006764, 6.664321523308754) -- (1.41589378254863, 6.895951761017162) -- cycle;

\draw[opacity=0.8,gray,thick] (0.3337576722656526, 4.2629821140984) -- (0.5208321238412211, 4.482113940659169);
\draw[fill=black] (0.3337576722656526, 4.2629821140984) circle (1.5pt);
\draw[fill=black] (0.5208321238412211, 4.482113940659169) circle (1.5pt);
\draw[opacity=0.8,gray,thick] (0.6779157857413833, 6.436572422358612) -- (0.3950402428022957, 6.481361871794583);
\draw[fill=black] (0.6779157857413833, 6.436572422358612) circle (1.5pt);
\draw[fill=black] (0.3950402428022957, 6.481361871794583) circle (1.5pt);
\draw[opacity=0.8,gray,thick] (2.0566479327920564, 4.534571505701716) -- (2.101430374993268, 4.250483702950762);
\draw[fill=black] (2.0566479327920564, 4.534571505701716) circle (1.5pt);
\draw[fill=black] (2.101430374993268, 4.250483702950762) circle (1.5pt);
\draw[opacity=0.8,gray,thick] (2.808803520186264, 6.447309321343283) -- (2.9964920337006764, 6.664321523308754);
\draw[fill=black] (2.808803520186264, 6.447309321343283) circle (1.5pt);
\draw[fill=black] (2.9964920337006764, 6.664321523308754) circle (1.5pt);

\draw[thick,blue] (6.29599744875811, 6.609093971429123) -- (6.476929755979502, 4.567862591568923) -- (8.257759275442012, 4.841483785148059) -- (8.810646390238388, 5.128874813439007) -- (9.38772162066735, 5.819559598706189) -- (8.78551981168817, 6.966824990266439) -- cycle;

\draw[thick,red] (9.224050126949415, 5.843336399564493) -- (8.893615330544378, 7.093904665258737) -- (7.925173294280466, 7.00763782306017) -- (6.536590019417608, 6.762653907653567) -- (6.192737992029044, 5.896854628973563) -- (6.523172788434081, 4.646286363279319) -- (7.491614824697992, 4.7325532054778865) -- (8.88019809956085, 4.97753712088449) -- cycle;

\draw[opacity=0.8,gray,thick] (6.29599744875811, 6.609093971429123) -- (6.451134296877382, 6.547481400176104);
\draw[fill=black] (6.29599744875811, 6.609093971429123) circle (1.5pt);
\draw[fill=black] (6.451134296877382, 6.547481400176104) circle (1.5pt);
\draw[opacity=0.8,gray,thick] (9.38772162066735, 5.819559598706189) -- (9.224050126949415, 5.843336399564493);
\draw[fill=black] (9.38772162066735, 5.819559598706189) circle (1.5pt);
\draw[fill=black] (9.224050126949415, 5.843336399564493) circle (1.5pt);
\draw[opacity=0.8,gray,thick] (8.78551981168817, 6.96682499026644) -- (8.893615330544378, 7.093904665258737);
\draw[fill=black] (8.78551981168817, 6.96682499026644) circle (1.5pt);
\draw[fill=black] (8.893615330544378, 7.093904665258737) circle (1.5pt);
\draw[opacity=0.8,gray,thick] (7.948324415257536, 6.846524499461226) -- (7.925173294280466, 7.00763782306017);
\draw[fill=black] (7.948324415257536, 6.846524499461226) circle (1.5pt);
\draw[fill=black] (7.925173294280466, 7.00763782306017) circle (1.5pt);
\draw[opacity=0.8,gray,thick] (6.35783237217902, 5.911488397298184) -- (6.192737992029044, 5.896854628973563);
\draw[fill=black] (6.35783237217902, 5.911488397298184) circle (1.5pt);
\draw[fill=black] (6.192737992029044, 5.896854628973563) circle (1.5pt);
\draw[opacity=0.8,gray,thick] (8.803471331179377, 5.12514521376878) -- (8.88019809956085, 4.97753712088449);
\draw[fill=black] (8.803471331179377, 5.12514521376878) circle (1.5pt);
\draw[fill=black] (8.88019809956085, 4.97753712088449) circle (1.5pt);

\draw[thick,blue] (0.6653939437938469, 10.823455405083475) -- (0.8000399810077479, 8.013022849381413) -- (2.550473941095362, 8.65327303331315) -- (3.4820064767164483, 10.01325283472592) -- (2.4402384773543133, 11.312686814314135) -- cycle;

\draw[thick,red] (0.4833906287644487, 9.529562442071004) -- (1.0139087151772253, 8.132145035077285) -- (2.025077994878358, 8.218527746196642) -- (3.0730944468160426, 8.982980114560007) -- (3.2432611819266874, 9.966909820565267) -- (2.7127430955139107, 11.364327227558984) -- (1.7015738158127787, 11.277944516439629) -- (0.6535573638750931, 10.513492148076262) -- cycle;

\draw[opacity=0.8,gray,thick] (0.6653939437938469, 10.823455405083475) -- (0.8088591378584612, 10.626773584367758);
\draw[fill=black] (0.6653939437938469, 10.823455405083475) circle (1.5pt);
\draw[fill=black] (0.8088591378584612, 10.626773584367758) circle (1.5pt);
\draw[opacity=0.8,gray,thick] (0.8000399810077479, 8.013022849381413) -- (1.0139087151772253, 8.132145035077285);
\draw[fill=black] (0.8000399810077479, 8.013022849381413) circle (1.5pt);
\draw[fill=black] (1.0139087151772253, 8.132145035077285) circle (1.5pt);
\draw[opacity=0.8,gray,thick] (3.4820064767164483, 10.01325283472592) -- (3.2432611819266883, 9.966909820565267);
\draw[fill=black] (3.4820064767164483, 10.01325283472592) circle (1.5pt);
\draw[fill=black] (3.2432611819266883, 9.966909820565267) circle (1.5pt);
\draw[opacity=0.8,gray,thick] (0.7268247770019693, 9.541225217367362) -- (0.4833906287644487, 9.529562442071004);
\draw[fill=black] (0.7268247770019693, 9.541225217367362) circle (1.5pt);
\draw[fill=black] (0.4833906287644487, 9.529562442071004) circle (1.5pt);
\draw[opacity=0.8,gray,thick] (2.871086538344019, 9.121347566046401) -- (3.0730944468160426, 8.982980114560007);
\draw[fill=black] (2.871086538344019, 9.121347566046401) circle (1.5pt);
\draw[fill=black] (3.0730944468160426, 8.982980114560007) circle (1.5pt);
\draw[opacity=0.8,gray,thick] (2.5216564366830316, 11.21113132118587) -- (2.7127430955139107, 11.364327227558984);
\draw[fill=black] (2.5216564366830316, 11.21113132118587) circle (1.5pt);
\draw[fill=black] (2.7127430955139107, 11.364327227558984) circle (1.5pt);

\draw[thick,blue] (6.219995247432211, 9.882188364844733) -- (6.08847813255473, 9.564055723005996) -- (6.644142779724204, 8.531049609187075) -- (7.627940724114183, 8.029882448718906) -- (9.223082582501085, 8.366438799405184) -- (9.361156361829577, 8.432633714390057) -- (7.7089754266220005, 10.609721610863089) -- (7.4042771398807234, 10.875332714314474) -- cycle;

\draw[thick,red] (7.755446070291139, 7.848761475408373) -- (9.189198446608383, 8.54417168762233) -- (8.944606476557553, 9.271617549413618) -- (8.24747083138991, 10.235361113714369) -- (7.481617875614431, 10.688002170402225) -- (6.047865499297187, 9.992591958188267) -- (6.292457469348016, 9.265146096396979) -- (6.98959311451566, 8.301402532096228) -- cycle;

\draw[opacity=0.8,gray,thick] (9.361156361829577, 8.432633714390057) -- (9.189198446608383, 8.54417168762233);
\draw[fill=black] (9.361156361829577, 8.432633714390057) circle (1.5pt);
\draw[fill=black] (9.189198446608383, 8.54417168762233) circle (1.5pt);
\draw[opacity=0.8,gray,thick] (7.4042771398807234, 10.875332714314474) -- (7.481617875614431, 10.688002170402225);
\draw[fill=black] (7.4042771398807234, 10.875332714314474) circle (1.5pt);
\draw[fill=black] (7.481617875614431, 10.688002170402225) circle (1.5pt);
\draw[opacity=0.8,gray,thick] (7.713426177121031, 8.04791888350336) -- (7.755446070291139, 7.848761475408373);
\draw[fill=black] (7.713426177121031, 8.04791888350336) circle (1.5pt);
\draw[fill=black] (7.755446070291139, 7.848761475408373) circle (1.5pt);
\draw[opacity=0.8,gray,thick] (8.086044562320737, 10.1128555388913) -- (8.24747083138991, 10.235361113714369);
\draw[fill=black] (8.086044562320737, 10.1128555388913) circle (1.5pt);
\draw[fill=black] (8.24747083138991, 10.235361113714369) circle (1.5pt);
\draw[opacity=0.8,gray,thick] (6.219995247432211, 9.882188364844733) -- (6.047865499297187, 9.992591958188267);
\draw[fill=black] (6.219995247432211, 9.882188364844733) circle (1.5pt);
\draw[fill=black] (6.047865499297187, 9.992591958188267) circle (1.5pt);

\draw[thick,blue] (3.356628113220527, 15.027103136609968) -- (1.0220082433486612, 15.327207409429565) -- (0.4977027893868573, 15.303887234870587) -- (0.3927289719838458, 12.458379689431984) -- (0.893628566665871, 12.298345426331293) -- (2.974133324735726, 13.227499813983357) -- cycle;

\draw[thick,red] (3.3269521375012436, 14.071865812203104) -- (2.964963433229304, 15.368384854418721) -- (0.6378177402909669, 15.052740240627049) -- (0.34370200438682424, 14.718451745321182) -- (0.1541892192482841, 13.834079414024517) -- (0.5161779235202245, 12.5375603718089) -- (2.843323616458561, 12.85320498560057) -- (3.137439352362704, 13.18749348090644) -- cycle;

\draw[opacity=0.8,gray,thick] (3.356628113220527, 15.027103136609968) -- (3.0816822676487536, 14.950338131899048);
\draw[fill=black] (3.356628113220527, 15.027103136609968) circle (1.5pt);
\draw[fill=black] (3.0816822676487536, 14.950338131899048) circle (1.5pt);
\draw[opacity=0.8,gray,thick] (0.4977027893868573, 15.303887234870587) -- (0.637817740290967, 15.052740240627049);
\draw[fill=black] (0.4977027893868573, 15.303887234870587) circle (1.5pt);
\draw[fill=black] (0.637817740290967, 15.052740240627049) circle (1.5pt);
\draw[opacity=0.8,gray,thick] (0.893628566665871, 12.298345426331293) -- (0.8549499933672067, 12.583510042890834);
\draw[fill=black] (0.893628566665871, 12.298345426331293) circle (1.5pt);
\draw[fill=black] (0.8549499933672067, 12.583510042890834) circle (1.5pt);
\draw[opacity=0.8,gray,thick] (2.928173032566042, 15.082178998027498) -- (2.964963433229304, 15.368384854418721);
\draw[fill=black] (2.928173032566042, 15.082178998027498) circle (1.5pt);
\draw[fill=black] (2.964963433229304, 15.368384854418721) circle (1.5pt);
\draw[opacity=0.8,gray,thick] (0.4430868310106908, 13.823421672071266) -- (0.1541892192482841, 13.834079414024517);
\draw[fill=black] (0.4430868310106908, 13.823421672071266) circle (1.5pt);
\draw[fill=black] (0.1541892192482841, 13.834079414024517) circle (1.5pt);
\draw[opacity=0.8,gray,thick] (2.7257115800893708, 13.116554547873761) -- (2.843323616458561, 12.85320498560057);
\draw[fill=black] (2.7257115800893708, 13.116554547873761) circle (1.5pt);
\draw[fill=black] (2.843323616458561, 12.85320498560057) circle (1.5pt);

\draw[thick,blue] (6.644297201211346, 15.092941303612598) -- (6.22605024884862, 12.819547297754957) -- (8.487155136357643, 12.24819589393315) -- (8.987329346553357, 12.280501298172414) -- (8.979061874246227, 13.595196090743753) -- (7.35909939899043, 15.00378560304296) -- cycle;

\draw[thick,red] (6.181642850240471, 13.814453014340767) -- (6.433415027456646, 12.902285940533169) -- (8.13816092900468, 12.105407936733412) -- (8.754284580316025, 12.222139114292691) -- (8.939606849931721, 13.230902851942709) -- (8.687834672715546, 14.143069925750307) -- (6.983088771167512, 14.939947929550064) -- (6.366965119856167, 14.823216751990785) -- cycle;

\draw[opacity=0.8,gray,thick] (6.22605024884862, 12.819547297754957) -- (6.433415027456646, 12.902285940533169);
\draw[fill=black] (6.22605024884862, 12.819547297754957) circle (1.5pt);
\draw[fill=black] (6.433415027456646, 12.902285940533169) circle (1.5pt);
\draw[opacity=0.8,gray,thick] (8.987329346553357, 12.280501298172414) -- (8.772264896514082, 12.320011275095036);
\draw[fill=black] (8.987329346553357, 12.280501298172414) circle (1.5pt);
\draw[fill=black] (8.772264896514082, 12.320011275095036) circle (1.5pt);
\draw[opacity=0.8,gray,thick] (6.401641492493177, 13.773978832826675) -- (6.181642850240471, 13.814453014340767);
\draw[fill=black] (6.401641492493177, 13.773978832826675) circle (1.5pt);
\draw[fill=black] (6.181642850240471, 13.814453014340767) circle (1.5pt);
\draw[opacity=0.8,gray,thick] (8.193022140644487, 12.32251942529138) -- (8.13816092900468, 12.105407936733412);
\draw[fill=black] (8.193022140644487, 12.32251942529138) circle (1.5pt);
\draw[fill=black] (8.13816092900468, 12.105407936733412) circle (1.5pt);
\draw[opacity=0.8,gray,thick] (8.541939968608512, 13.975282260196726) -- (8.687834672715546, 14.143069925750307);
\draw[fill=black] (8.541939968608512, 13.975282260196726) circle (1.5pt);
\draw[fill=black] (8.687834672715546, 14.143069925750307) circle (1.5pt);
\draw[opacity=0.8,gray,thick] (6.587219819449563, 14.782695462407393) -- (6.366965119856167, 14.823216751990785);
\draw[fill=black] (6.587219819449563, 14.782695462407393) circle (1.5pt);
\draw[fill=black] (6.366965119856167, 14.823216751990785) circle (1.5pt);

\end{tikzpicture}
\end{center}
\caption{Pairs of polytopes $P,Z$ where $P$ (blue) is randomly generated and $Z$ (red) is a rank $4$ local minimum of $d_P$ found using Algorithm \ref{alg:subgradient}. The black points are the pairs $(p,q) \in P \times Z$ that achieve the Hausdorff distance $d_P(Z)$.}
\label{fig:eight-examples}
\end{figure}
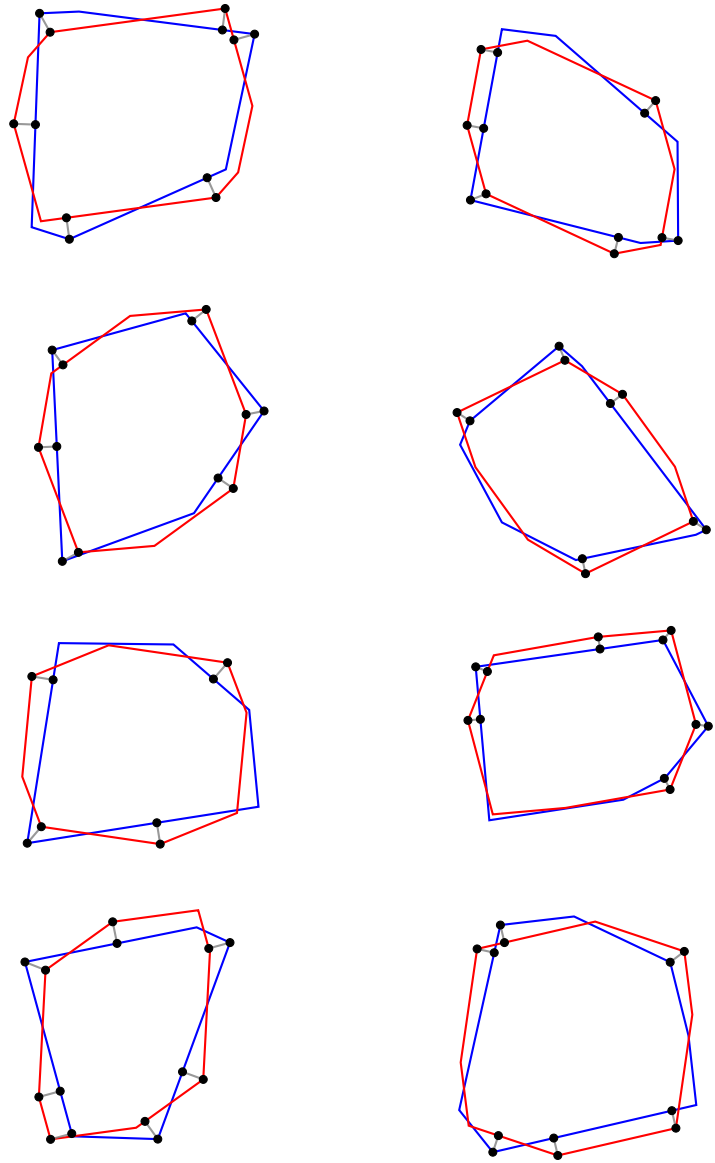


\ \\
{\bf Acknowledgements}: I'd like to acknowledge Joe Kileel who was a helpful resource while drafting this work and provided great suggestions, as well as Diane Maclagan who was a careful and thoughtful reviewer. I would also like to credit the authors of the OSQP solver and the \texttt{qpsolvers} python package, which are used in the software implementation of this work \cite{OSQP,qpsolvers}. Finally, I'd like to thank the University of Texas at Austin Mathematics Department, which originally accepted this as part of my Ph.D. dissertation in August 2023. 
\clearpage

\bibliographystyle{unsrt}
\bibliography{zonopt}

\begin{thebibliography}{10}

\bibitem{McM70}
Peter McMullen.
\newblock {Polytopes with Centrally Symmetric Faces}.
\newblock {\em Israel Journal of Mathematics}, 8:194--196, 1970.

\bibitem{Shor85}
Naum Shor.
\newblock {\em {Minimization Methods for Non-Differentiable Functions}}.
\newblock Springer Berlin, Heidelberg, 1985.

\bibitem{MSRM22}
Panagiotis Misiakos, Georgios Smyrnis, George Retsinas, and Petros Maragos.
\newblock {Neural Network Approximation based on Hausdorff distance of Tropical
  Zonotopes}.
\newblock In {\em International Conference on Learning Representations}, 2022.

\bibitem{GNZ03}
Leonidas~J. Guibas, An~Thanh Nguyen, and Li~Zhang.
\newblock {Zonotopes as bounding volumes}.
\newblock In {\em ACM-SIAM Symposium on Discrete Algorithms}, 2003.

\bibitem{BAC06}
J.M. Bravo, T.~Alamo, and E.F. Camacho.
\newblock {Bounded error identification of systems with time-varying
  parameters}.
\newblock {\em IEEE Transactions on Automatic Control}, 51(7), 2006.

\bibitem{COM15}
Christophe Combastel.
\newblock {Zonotopes and Kalman observers: Gain optimality under distinct
  uncertainty paradigms and robust convergence}.
\newblock {\em Automatica}, 55:265--273, 2015.

\bibitem{Ku98}
W.~K{\"u}hn.
\newblock {Rigorously computed orbits of dynamical systems without the wrapping
  effect}.
\newblock {\em Computing}, 61(1), 1998.

\bibitem{ASB08}
Matthias Althoff, Olaf Stursberg, and Martin Buss.
\newblock {Reachability analysis of nonlinear systems with uncertain parameters
  using conservative linearization}.
\newblock In {\em {2008 47th IEEE Conference on Decision and Control}}, 2008.

\bibitem{Gir05}
Antoine Girard.
\newblock Reachability of uncertain linear systems using zonotopes.
\newblock In Manfred Morari and Lothar Thiele, editors, {\em Hybrid Systems:
  Computation and Control}, pages 291--305. Springer Berlin Heidelberg, 2005.

\bibitem{YS18}
Xuejiao Yang and Joseph~K. Scott.
\newblock {A comparison of zonotope order reduction techniques}.
\newblock {\em Automatica}, 95:378--384, 2018.

\bibitem{RW98}
R.~Tyrrell Rockafellar and Roger J.~B. Wets.
\newblock {\em {Variational Analysis}}.
\newblock Springer, 1998.

\bibitem{GO97}
Jacob~E. Goodman and Joseph O'Rourke, editors.
\newblock {\em {Handbook of Discrete and Computational Geometry}}.
\newblock CRC Press, Inc., USA, 1997.

\bibitem{McM71}
Peter McMullen.
\newblock {On Zonotopes}.
\newblock {\em Transactions of the American Mathematical Society}, 159, 1971.

\bibitem{Kon14}
Stefan K{\"o}nig.
\newblock {Computational Aspects of the Hausdorff Distance in Unbounded
  Dimension}.
\newblock {\em arXiv}, 2014.

\bibitem{Bur14}
James~V. Burke.
\newblock {Nonlinear Optimization}.
\newblock {\em Lecture Notes, The University of Washington}, 2014.

\bibitem{LMK20}
Jiajin Li, Anthony Man-Cho So, and Wing-Kin Ma.
\newblock {Understanding Notions of Stationarity in Non-Smooth Optimization}.
\newblock {\em arXiv}, 2020.

\bibitem{Clarke90}
Frank~H. Clarke.
\newblock {\em Optimization and Nonsmooth Analysis}.
\newblock Society for Industrial and Applied Mathematics, 1990.

\bibitem{BT21}
Adil~M. Bagirov, Sona Taheri, Jaisa Joki, Napsu Karmitsa, and Marko~M.
  M{\"a}kel{\"a}.
\newblock {Aggregate subgradient method for nonsmooth DC optimization}.
\newblock {\em Optmization Letters}, 15(1), 2021.

\bibitem{Kiw86}
Krzysztof~C. Kiwiel.
\newblock {An Aggregate Subgradient Method for Nonsmooth and Nonconvex
  Minimization}.
\newblock {\em Journal of Computational and Applied Mathematics},
  14(3):391--400, 1986.

\bibitem{BJ13}
A.~M. Bagirov, L.~Jin, N.~Karmitsa, A.~Al Nuaimat, and N.~Sultanova.
\newblock {Subgradient Method for Nonconvex Nonsmooth Optimization}.
\newblock {\em Journal of Optimization Theory and Applications},
  157(2):416--435, 2013.

\bibitem{BL04}
Stephen Boyd and Lieven Vandenberghe.
\newblock {\em Convex Optimization}.
\newblock Cambridge University Press, 2004.

\bibitem{OSQP}
B.~Stellato, G.~Banjac, P.~Goulart, A.~Bemporad, and S.~Boyd.
\newblock {OSQP: an operator splitting solver for quadratic programs}.
\newblock {\em Mathematical Programming Computation}, 12(4):637--672, 2020.

\bibitem{qpsolvers}
St\'{e}phane Caron, Daniel Arnstr\"{o}m, Suraj Bonagiri, Antoine Dechaume,
  Nikolai Flowers, Adam Heins, Takuma Ishikawa, Dustin Kenefake, Giacomo
  Mazzamuto, Donato Meoli, Brendan O'Donoghue, Adam~A. Oppenheimer, Abhishek
  Pandala, Juan~Jos\'{e} Quiroz Oma\~{n}a, Nikitas Rontsis, Paarth Shah, Samuel
  St-Jean, Nicola Vitucci, Soeren Wolfers, Fengyu Yang, @bdelhaisse,
  @MeindertHH, @rimaddo, @urob, and @shaoanlu.
\newblock {qpsolvers: Quadratic Programming Solvers in Python}, March 2024.

\end{thebibliography}

\ \\

\begin{center}
\small{\textsc{Austin, TX 78733, USA}}

\small{{\it Email address:} \url{gdavtor@gmail.com}}
\end{center}

\end{document}